\documentclass[3p,10pt,nopreprintline]{elsarticle}

\usepackage{amsmath}
\usepackage{amsthm}
\usepackage{amssymb}
\usepackage{bm}
\usepackage{graphicx}
\usepackage{epsfig}
\usepackage[normal]{subfigure}
\usepackage{float}
\usepackage{paralist}
\usepackage{caption2}
\usepackage[hidelinks]{hyperref}
\usepackage{enumitem}
\usepackage{aliascnt}
\usepackage[T1]{fontenc}
\usepackage{natbib}
\usepackage{color}
\usepackage{comment}
\usepackage[table]{xcolor}
\usepackage{colortbl,booktabs}

\newtheoremstyle{theorem}{6pt}{6pt}{\itshape}{}{\bfseries}{.}{.5em}{}
\newtheoremstyle{definition}{6pt}{6pt}{\upshape}{}{\bfseries}{.}{.5em}{}

\theoremstyle{theorem}
\newtheorem{theorem}{Theorem}[section]

\newaliascnt{corollary}{theorem}

\aliascntresetthe{corollary}

\newaliascnt{lemma}{theorem}
\newtheorem{lemma}[lemma]{Lemma}
\aliascntresetthe{lemma}

\newaliascnt{sublemma}{theorem}

\aliascntresetthe{sublemma}

\theoremstyle{definition}

\newtheorem{remark}{Remark}[section]

\newaliascnt{proposition}{theorem}
\newtheorem{proposition}[proposition]{Proposition}
\aliascntresetthe{proposition}

\newcommand{\R}{{\mathbb R}}

\newcommand{\bn}{\begin{eqnarray}}
	\newcommand{\en}{\end{eqnarray}}
\newcommand{\bnn}{\begin{eqnarray*}}
	\newcommand{\enn}{\end{eqnarray*}}
\renewcommand{\div}{ {\rm div }  }

\newcommand{\T}{\mathbb{T}}

\newcommand{\al}{\alpha}

\newcommand{\frD}{\mathfrak{D}}

\newcommand{\pt}{\partial_t}

\newcommand{\D}{\nabla}

\usepackage{scalerel,stackengine}
\stackMath
\newcommand\reallywidehat[1]{%
	\savestack{\tmpbox}{\stretchto{%
			\scaleto{%
				\scalerel*[\widthof{\ensuremath{#1}}]{\kern-.6pt\bigwedge\kern-.6pt}%
				{\rule[-\textheight/2]{1ex}{\textheight}}
			}{\textheight}%
		}{0.5ex}}%
	\stackon[1pt]{#1}{\tmpbox}%
}

\numberwithin{equation}{section}

\newcommand{\rhob}{\bar\rho}
\newcommand{\tb}{\bar\theta}
\newcommand{\hb}{\bar H}
\newcommand{\dd}{\,\mathrm d}
\newcommand{\dv}{\operatorname{div}}
\newcommand{\Dt}{D_t}
\hypersetup{
 pdftitle={Global strong solutions for aligned 2.5D non-resistive full MHD at high temperature and magnetic strength},
 pdfauthor={Zixuan Qu and Zhaoyang Shang}
}

\begin{document}
\begin{frontmatter}
\title{Global strong solutions to the aligned 2.5D non-resistive full MHD system
with high temperature and a strong magnetic field}


\author[label1]{Yachun Li}
\address[label1]{School of Mathematical Sciences, MOE-LSC, and SHL-MAC, Shanghai Jiao Tong University, Shanghai 200240, P.R.China;}

\author[label2]{Zixuan Qu}
\address[label2]{School of Mathematical Sciences, Shanghai Jiao Tong University, Shanghai 200240, P.R.China;}

\author[label4]{Zhaoyang Shang\corref{cor1}}
\address[label4]{School of Finance, Shanghai Lixin University of  Accounting and Finance, Shanghai 201209, P.R.China;}

\cortext[cor1]{Corresponding author.}
\ead{shangzhaoyang@sjtu.edu.cn}

\begin{abstract}
We study the Cauchy problem for an aligned vertical-field reduction of
the full compressible non-resistive MHD equations in two space variables.
The shear and bulk viscosities are proportional to $\theta^\alpha$,
and the heat conductivity is proportional to $\theta^\beta$.
For $2<\alpha<\beta<\alpha+2$, we prove global existence and uniqueness
of strong solutions when the background temperature and the comparable
magnetic background are sufficiently large, while the positive density
background is fixed. The velocity and temperature perturbations belong
initially to $H^2$, and the transported magnetic perturbation belongs to
$H^1\cap D^{1,4}$; their absolute norms may be bounded by an arbitrary
fixed constant. Alignment is essential and implies a small density
perturbation relative to the magnetic background.
Our proof combines relative energy estimates, uniform in the background
temperature and magnetic field, with coupled estimates for the material derivatives of the velocity and
temperature. These bounds control the source terms in the damped magnetic-gradient equation, allowing
us to close the global a priori estimates without a separate large-time decay assumption. We construct
the global strong solution by smooth approximation and Sobolev energy estimates for the approximating
solutions, and prove uniqueness by an energy estimate for the difference of two solutions.\end{abstract}
\begin{keyword}
Full compressible MHD; non-resistive flow; temperature-dependent viscosity;
global strong solutions; fixed background density; high temperature.
\end{keyword}
\end{frontmatter}

\section{Introduction}
\label{sec:introduction}
	The full compressible non-resistive MHD equations read:
	\begin{equation}\label{FCNS}
		\begin{cases}
			\pt\rho+\div(\rho u)=0,\\
			\pt(\rho u)+\div(\rho u\otimes u)+\D P=\div\T+\div\left(-\frac{|H|^2}{2}\mathbb I_3+{H\otimes H}\right),\\
			\partial_t \mathcal{E}_{\mathrm{tot}}  + \operatorname{div} \left[ \left( \mathcal{E}_{\mathrm{tot}}  +P + \frac{1}{2} |{H}|^2 \right) {u} - ({u} \cdot {H}){H} \right]
			= \operatorname{div} \left(\T{u} + \tilde{\kappa}\nabla \theta \right),\\
			H_t + {u}\cdot\nabla H - H\cdot\nabla {u} + H\mathrm{div}{u} = 0,\quad \mathrm{div}{H} = 0.
		\end{cases}
	\end{equation}
	where $x=(x_1,x_2,x_3)^{\top}\in\R^3$ and $t\geq 0$ denote the spatial coordinate and time coordinate, $\rho\geq 0$ the mass density, $u = (u^1, u^2, u^3)^{\top}$ the fluid velocity, $H=(H^1,H^2,H^3)^{\top}$ the magnetic field, $P$ the pressure of the fluid, $\theta$ the absolute temperature, $\mathcal{E}_{\mathrm{tot}}  = \rho \left( e + \frac{1}{2} |{u}|^2 \right) + \frac{1}{2} |{H}|^2$ the total energy density, $e$ the specific internal energy. The equation of state for polytropic fluids satisfies
	\begin{equation}\label{Pecv}
		P=R\rho\theta, \quad e=c_v\theta,
	\end{equation}
	where $R$ is the gas constant, and $c_v$ is the specific heat at
	constant volume. $\T$ is the viscosity stress tensor given by $\T=2\tilde{\mu}\frD(u)+\tilde{\lambda}\div u\mathbb{I}_3$, where 
	$$\frD(u) =\frac{\D u+(\D u)^{\top}}{2}$$
	is the deformation tensor, $\mathbb{I}_3$ is the $3 \times 3$ identity matrix, $\tilde{\mu}$ is the shear viscosity coefficient, and $\tilde{\lambda}+\frac{2}{3}\tilde{\mu}$ is the bulk viscosity coefficient. $\tilde{\kappa}$ denotes the coefficient of thermal conductivity.
	In this paper, we will consider the following  temperature-dependent transport coefficients:
	\begin{equation}\label{temperature-dependent}
		\tilde{\mu}(\theta)=\mu\theta^\al, \quad \tilde{\lambda}(\theta)=\lambda\theta^\al, \quad \tilde{\kappa}=\kappa\theta^\beta,
	\end{equation}
	where $\mu$, $\lambda$, $\kappa$, $\al$ and $\beta$ are constants satisfying $\mu>0$, $\kappa>0$, $2\mu+3\lambda\geq0$, $\al>0$, and $\beta>0$.

All unknowns are now assumed to be independent of $x_3$, with
$u=(u^1,u^2,0)$ and the three-dimensional magnetic field
$(0,0,H^3)$. This vertical-field configuration is a special
$2.5$-dimensional reduction of \eqref{FCNS}. From now on,
$x=(x_1,x_2)\in\R^2$, $u=(u^1,u^2)$, and $H=H^3$ denotes a
scalar. The divergence-free constraint is automatically satisfied
by the three-dimensional vector $(0,0,H)$ and does not give an
additional divergence equation for the scalar $H$.
We normalize $c_v=1$. Substituting 
\eqref{Pecv}--\eqref{temperature-dependent} into \eqref{FCNS}
gives
	\begin{equation}\label{FCNS:2}
		\begin{cases}
			\pt\rho+\div(\rho u)=0,\\
			\rho(\pt u+(u\cdot\D)u)+\D P=2\mu\div(\theta^\al\frD(u)) +\lambda\D(\theta^\al\div u)-\frac{1}{2}\nabla(|H|^2),\\
			\rho(\pt\theta+u\cdot\D\theta)+P\div u=2\mu\theta^\al |\frD(u)|^2+\lambda\theta^\al(\div u)^2+\kappa\div(\theta^\beta\D\theta),\\
			H_t+\operatorname{div}(uH)=0.
		\end{cases}
	\end{equation}
	
\paragraph{Local existence with temperature-dependent coefficients}
The dependence of transport coefficients on temperature introduces
additional coupling between the momentum, thermal, and magnetic equations.
Tang and Gao~\citep{TangGao2016} considered full compressible MHD with
viscosity, heat conductivity, and magnetic diffusivity depending on density
and temperature. By maximal regularity and a contraction argument, they
obtained local existence and uniqueness of strong solutions with positive
initial density on bounded domains.
The hyperbolic--parabolic framework of Kawashima~\citep{Kawashima1983}
provides a complementary approach to local solvability near positive
constant states. Applying either framework to the present non-resistive
Cauchy problem requires attention to the absence of magnetic diffusion
and to the different regularities of the transported and parabolic variables.

\paragraph{Global existence in one spatial dimension}
For spatially one-dimensional  flows, substantial large-data results
are available, although restrictions on the constitutive laws are essential.
Hu and Ju~\citep{HuJu2015} established global strong solvability for an
initial-boundary value problem with large data and heat conductivity
$\kappa(\theta)=\theta^q$, $q>0$. This result concerns temperature-dependent
heat conduction; allowing the viscosities to depend on temperature
creates additional difficulties.
Su and Zhao~\citep{SuZhao2018} treated temperature-dependent viscosity, heat
conductivity, and magnetic diffusion, obtaining global classical solutions
and convergence to equilibrium when the adiabatic exponent is sufficiently
close to one. Li and Shang~\citep{LiShang2019} considered an initial-boundary
value problem with transport coefficients proportional to $h(v)\theta^\alpha$,
where $v$ is the specific volume and $h$ satisfies structural assumptions;
large-data global classical solvability was obtained for sufficiently
small $|\alpha|$. For the corresponding Cauchy problem, Shang~\citep{Shang2022}
proved global classical existence and asymptotic stability with
temperature-dependent viscosities and heat conductivity, under smallness
of $|\alpha|$ and sufficiently large constant magnetic diffusivity.
More recently, Song and Zhao~\citep{SongZhao2025} considered power-law
temperature dependence in the viscosity, heat conductivity, and resistivity
on a bounded interval, with a common exponent for the viscosities and
resistivity and a separate exponent for heat conductivity.
They established global non-vacuum strong solutions
and exponential stability for large initial data when the viscosity exponent
is sufficiently small. These are resistive results, and \emph{planar} in these
papers means dependence on one spatial variable, not the two-variable
geometry of \eqref{FCNS:2}.

\paragraph{Global existence in two spatial dimensions}
For the two-dimensional full compressible system, Li and
Shang~\citep{LiShang2022} proved global existence of strong solutions
for an initial-boundary value problem with vacuum and without a
small-data restriction when the viscosity is sufficiently large.
Their viscosities, heat conductivity, and positive magnetic diffusivity
are constant.
This result supplies an important large-dissipation comparison for
the present problem. It does not, however, identify a high-temperature
threshold for the power-law, non-resistive Cauchy system
\eqref{FCNS:2}. In particular, replacing a large constant viscosity
by $\mu\theta^\alpha$ requires control of temperature from below
and of the derivatives of the viscosity.
For temperature-dependent transport in two spatial variables,
Li and Sun~\citep{LiSun2021} obtained global weak solutions for a
vertical-field non-resistive reduction on a bounded planar domain.
We discuss that result in the $2.5$-dimensional paragraph below:
its two-dimensional spatial domain should not be confused with
an unrestricted in-plane vector magnetic field.

\paragraph{Global existence in three spatial dimensions}
Ducomet and Feireisl~\citep{DucometFeireisl2006} established global
finite-energy weak solutions for a radiative full MHD model on bounded
three-dimensional domains with temperature-dependent transport
coefficients and conservative boundary conditions.
For resistive full compressible MHD, Hu and Wang~\citep{HuWang2008}
constructed global variational weak solutions with large finite-energy
data, allowing initial density vacuum, on bounded three-dimensional
domains. Their assumptions permit bounded, temperature-dependent
viscosities with a positive lower bound for the shear viscosity,
heat conductivity comparable to $1+\theta^q$ for $q>2$, and constant
positive magnetic diffusivity; the pressure obeys additional structural
conditions. Feireisl, Gwiazda, Kwon, and
\'{S}wierczewska-Gwiazda~\citep{FeireislEtAl2023} subsequently developed
a global weak theory with non-conservative boundary conditions,
temperature-dependent viscosities, heat conductivity, and magnetic
diffusivity, under structural assumptions on a radiative equation of state.
At the strong-solution level, Zhang~\citep{Zhang2022}
proved a global existence result on a bounded three-dimensional domain
with density- and temperature-dependent viscosities, constant positive
heat conductivity and magnetic diffusivity, and homogeneous Dirichlet
conditions for velocity, temperature, and magnetic field.
The hypotheses include compatibility conditions and a sufficiently
small initial $L^2$ norm of their spatial gradients; initial density
vacuum is allowed. For the full non-resistive system with constant
viscosities and heat conductivity, Li~\citep{Li2022Nonresistive}
proved global well-posedness near a nonzero vertical magnetic equilibrium
in a three-dimensional infinite slab, under a small high-order Sobolev
perturbation assumption. These results concern different combinations
of constitutive laws, domains, and smallness conditions; none directly
provides a large-data, high-temperature theorem for \eqref{FCNS:2}.

\paragraph{Global existence in the special $2.5$-dimensional geometry}
A closely related temperature-dependent predecessor is the weak existence
theorem of Li and Sun~\citep{LiSun2021}. In their model,
the shear viscosity has the form $\mu_0+\mu_1\theta$, the bulk viscosity
in their stress convention is zero, and the heat conductivity has the
form $\kappa_0+\kappa_2\theta^2+\kappa_3\theta^3$, with positive
coefficients. The domain is bounded, and the pressure contains cold
and radiative components in addition to the ideal-gas term.
Their result therefore establishes global solvability for a full,
temperature-dependent, non-resistive model, but not a global strong
solution of the pure power-law ideal-gas Cauchy problem studied here.
For constant transport coefficients, Zhai and Zhang~\citep{ZhaiZhang2025}
obtained global strong solutions near a constant equilibrium in
$\mathbb R^2$ in a hybrid Besov framework, together with decay estimates
under additional low-frequency assumptions. For constant coefficients
on the torus $\mathbb T^2$, Zhai, Li, and Zhao~\citep{ZhaiLiZhao2025}
proved small-perturbation global strong solvability in $H^3$, including
perturbations near an equilibrium with zero magnetic field.
These results distinguish the weak theory for temperature-dependent
coefficients from the perturbative strong theory for constant
coefficients. The present result concerns the
combination of power-law viscosities and heat conductivity, zero
magnetic diffusivity, and the alignment $H_0/\rho_0=\bar H/\bar\rho$.
In this configuration the magnetic pressure contributes to the
compressive part of the dynamics, whereas a high temperature
increases both viscosity and heat conductivity. Establishing global
strong solvability from these large backgrounds requires estimates
that retain their different effects and track all background-dependent
constants.

\subsection{Main result}

Fix $\rhob>0$ and positive constants $c_H\le C_H$. We assume
\begin{equation}\label{eq:fixed-background}
 \tb\ge1,\qquad c_H\tb\le\hb\le C_H\tb,\qquad
 H_0/\rho_0\equiv\hb/\rhob .
\end{equation}
The initial perturbations satisfy, for an arbitrary fixed $C_0>0$,
\begin{equation}\label{eq:fixed-data}
 \|H_0-\hb\|_{H^1\cap D^{1,4}}
       +\|u_0\|_{H^2}+\|\theta_0-\tb\|_{H^2}\le C_0,
 \qquad \rho_0=\frac{\rhob}{\hb}H_0.
\end{equation}
Here $D^{1,4}=\{f\in L^1_{\rm loc}:\nabla f\in L^4\}$, with
$\|f\|_{H^1\cap D^{1,4}}=\|f\|_{H^1}+\|\nabla f\|_4$.
All spaces below are over $\R^2$. Put
\begin{equation}\label{eq:global-damping}
 \nu=2\mu+\lambda,\qquad
 d_*=\frac{\hb^2+R\rhob\tb}{\nu\tb^\alpha}\asymp\tb^{2-\alpha}.
\end{equation}

\begin{theorem}[Global strong solutions at fixed background density]
\label{thm:global}
Let $\mu,\kappa,R>0$, $2\mu+3\lambda\ge0$, and
\begin{equation}\label{eq:fixed-working-exponents}
 2<\alpha<\beta<\alpha+2.
\end{equation}
For every fixed $\rhob>0$, $C_0>0$, and $0<c_H\le C_H$, there
exists $L\ge1$, depending only on these quantities and the physical
parameters and exponents, such that the following holds.
If \eqref{eq:fixed-background}--\eqref{eq:fixed-data} hold and
$\tb\ge L$, the Cauchy problem \eqref{FCNS:2} admits a unique global
strong solution. On every finite interval $[0,T]$ it satisfies
\begin{equation}\label{eq:fixed-target-space}
 \begin{aligned}
 &(\rho-\rhob,H-\hb)\in C([0,T];H^1\cap D^{1,4}),\\
 &(u,\theta-\tb)\in C([0,T];H^2),\\
 &(u_t,\theta_t)\in L^\infty(0,T;L^2)\cap L^2(0,T;H^1),\\
 &(\nabla^2u,\nabla^2\theta)\in L^2(0,T;L^4),\qquad
 (\dot u,\dot\theta)\in C([0,T];L^2)\cap L^2(0,T;H^1).
 \end{aligned}
\end{equation}
The equations hold almost everywhere, the initial data are attained
in the displayed continuous spaces, and
\begin{equation}\label{eq:global-bands}
 H=\frac{\hb}{\rhob}\rho,\qquad
 \frac23\rhob\le\rho\le\frac43\rhob,\quad
 \frac23\tb\le\theta\le\frac43\tb,\quad
 \frac23\hb\le H\le\frac43\hb
 \qquad(t\ge0).
\end{equation}
There is a constant $C$, independent of $T,\tb,\hb$, such that
\begin{equation}\label{eq:global-uniform}
 \sup_{t\ge0}\left[
  \|u(t)\|_{H^2}^2+\|H(t)-\hb\|_{H^1}^2+\|\nabla H(t)\|_4^2
  +\tb^{-1}\|\theta(t)-\tb\|_2^2
  +\|\nabla\theta(t)\|_{H^1}^2\right]\le C,
\end{equation}
and
\begin{equation}\label{eq:global-integrals}
 \begin{aligned}
 \int_0^\infty\Big[&
 \tb^\alpha\|\nabla u\|_2^2+\tb^{\beta-1}\|\nabla\theta\|_2^2
 +d_*\big(\|\nabla H\|_2^2+\|\nabla H\|_4^2\big)
 +\tb^{-\alpha}\big(\|\dot u\|_2^2+\|\nabla\dot u\|_2^2\big)
 +\tb^{-\beta}\big(\|\dot\theta\|_2^2+\|\nabla\dot\theta\|_2^2\big)\\
 &+\tb^\alpha\big(\|\nabla^2u\|_2^2+\|\nabla^2u\|_4^2\big)
 +\tb^\beta\|\nabla^2\theta\|_2^2\Big]\dd t\le C .
 \end{aligned}
\end{equation}
\end{theorem}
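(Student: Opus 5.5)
The proof is a continuity (bootstrap) argument built on a local strong solution, closed with a priori bounds whose constants are uniform in $\tb$ and $\hb$.

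\textbf{Step 0: local solution and alignment.}
For smooth data with positive density and temperature, the Kawashima-type hyperbolic--parabolic theory gives a local solution. Combining the continuity equation and the $H$-equation shows that $H/\rho$ is transported: $(\partial_t+u\cdot\nabla)(H/\rho)=0$. The alignment in \eqref{eq:fixed-background} therefore propagates, $H=\frac{\hb}{\rhob}\rho$, and the system reduces to $(\rho,u,\theta)$ with effective pressure
\[
p_{\rm eff}=R\rho\theta+\frac{\hb^2}{2\rhob^2}\rho^2 .
\]

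\textbf{Bootstrap hypothesis.}
On $[0,T^*)$ I assume the bands \eqref{eq:global-bands} with $\frac23,\frac43$ replaced by $\frac12,2$. I also assume the quantities in \eqref{eq:global-uniform}--\eqref{eq:global-integrals} are bounded by $2C$. The goal is to recover the sharper bounds with constants independent of $T^*$ and $\tb$, once $\tb\ge L$.

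\textbf{Step 1: relative energy.}
I use the relative entropy/energy
\[
\int \rho\frac{|u|^2}{2}+\rho\,\tb\,\Phi(\theta/\tb)+G(\rho\,|\,\rhob),
\]
where $G$ is built from $p_{\rm eff}$. It yields dissipation $\tb^{\alpha}\|\nabla u\|_2^2+\tb^{\beta-1}\|\nabla\theta\|_2^2$ and the bound $\tb^{-1}\|\theta-\tb\|_2^2\lesssim1$. Since $\hb\asymp\tb$ and $\|H_0-\hb\|\le C_0$, the density perturbation $\|\rho_0-\rhob\|$ is $O(C_0/\tb)$. It is this smallness, coming from alignment, that makes the pressure potential energy $O(1)$ uniformly.

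\textbf{Step 2: material derivatives.}
I test the momentum equation with $\dot u$ and its $\dot{}$-differentiated form with $\dot u$ (Hoff-type estimates), and do the same for $\theta$ with $\dot\theta$. This gives the weighted bounds $\tb^{-\alpha}\|\dot u\|_{H^1}^2$ and $\tb^{-\beta}\|\dot\theta\|_{H^1}^2$ in $L^1_t$, with errors bounded by powers of $\tb$ times bootstrap quantities.

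The temperature-gradient terms arising from $\nabla\theta^\alpha$ appear as $\alpha\theta^{\alpha-1}\nabla\theta$. They are controlled because $\tb^{\beta-1}\|\nabla\theta\|^2$ is dissipated. The ratio of these weights produces factors $\tb^{\alpha-\beta}$ and $\tb^{\beta-\alpha-2}$, which the condition $\alpha<\beta<\alpha+2$ in \eqref{eq:fixed-working-exponents} makes negative powers. Elliptic (Lamé and heat) estimates then convert $\dot u,\dot\theta$ bounds into $\nabla^2u$ and $\nabla^2\theta$ in $L^2$ and $L^4$.

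\textbf{Step 3: damped magnetic gradient.}
This is the main obstacle. Write $F=\nu\theta^\alpha\operatorname{div}u-(p_{\rm eff}-\bar p)$ for the effective viscous flux. The gradient $w=\nabla H$ then satisfies
\[
\dot w+\frac{\hb\,\partial_\rho p_{\rm eff}}{\rhob\,\nu\theta^\alpha}\,w=\text{sources},
\]
with damping rate $\asymp d_*\asymp\tb^{2-\alpha}$. The sources are $\nabla F/(\nu\theta^\alpha)$, terms involving $\nabla\theta$, and the quadratic term $\nabla u\cdot w$.

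I will estimate $\|w\|_2$ and $\|w\|_4$ by multiplying by $|w|^{p-2}w$ and integrating in time, using $\int d_*\|w\|_p^2\,dt$. The $\nabla F$ term is bounded through $\rho\dot u$ by the Step 2 integrals. The term $\|\nabla u\|_\infty$ is integrable in time since $\tb^{\alpha}\|\nabla^2u\|_4^2\in L^1$, which gives a small factor. The hard part is keeping every power of $\tb$ nonpositive in this closure; the role of $\alpha>2$ (damping decaying, but viscosity dominating) must be tracked carefully.

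\textbf{Step 4: closing and concluding.}
From the integrals I get $\|\theta-\tb\|_\infty\lesssim\|\theta-\tb\|_2^{1/2}\|\nabla^2\theta\|_2^{1/2}\ll\tb$ by Gagliardo--Nirenberg, and $\|\rho-\rhob\|_\infty\lesssim\tb^{-1}\|\nabla H\|_{H^1\cap L^4}$-type bounds. These recover the $\frac23,\frac43$ bands for $\tb\ge L$, so the bootstrap closes and \eqref{eq:global-uniform}--\eqref{eq:global-integrals} hold.

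General data are obtained by smooth approximation: the uniform bounds pass to the limit by compactness, and the continuity in time follows from the standard argument.

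Uniqueness follows from an $L^2$ energy estimate for the difference of two solutions. The density difference is taken in $L^2$, and Gronwall is applied with weights in $\|\nabla u\|_\infty$, $\|\nabla^2 u\|_4$, and $\|\nabla\theta\|_\infty$, all of which are integrable.
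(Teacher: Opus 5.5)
Your outline follows the paper's architecture: alignment giving an $O(1)$ relative energy, Hoff-type first-order and material estimates, Lam\'e and heat elliptic bounds, a damped $L^2$/$L^4$ equation for $w=\nabla H$ via an effective flux, smooth approximation, and an $L^2$ difference estimate. However, the step you call the main obstacle is left open, and the difficulty there is not the sign of the powers of $\tb$. After Young's inequality against the damping $d_*\|w\|_p^2$, the flux forcing contributes
\[
 \frac{\hb^2}{d_*}\int_0^T\|\nabla F\|_p^2\,dt\sim\tb^{-\alpha}\int_0^T\|\dot u\|_p^2\,dt .
\]
This term is of order one and has no reason to become small as $\tb\to\infty$: the first-order identity bounds $\int_0^T\|\sqrt\rho\,\dot u\|_2^2\,dt$ only by roughly $\frac12\int\theta_0^\alpha Q(u_0)\,dx\sim\tb^\alpha$.

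If you estimate this forcing with your bootstrap hypothesis, you get $\sup_t\|\nabla H\|_p^2\le C'(\|\nabla H_0\|_p^2+2C)$, which does not return $\le C$. Your uniform ``$\le 2C\Rightarrow\le C$'' scheme has the same defect for $\tb^\alpha\int_0^T\|\nabla^2u\|_p^2\,dt$. Its Lam\'e bound contains $\tb^{1-\alpha}\|\nabla H\|_p$, so at leading order it is a multiple of the bootstrap quantity $d_*\int_0^T\|\nabla H\|_p^2\,dt$.

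What is missing is the non-circular hierarchy of Lemmas~\ref{lem:fd-short-time}, \ref{lem:fd-initial-material}, \ref{lem:fdgc-improvement} and Remark~\ref{rem:fdgc-choices}:
\begin{itemize}
\item Your Steps~1--2 must give $\int_0^T(\|\dot u\|_2^2+\|\nabla\dot u\|_2^2)\,dt\le B_*\tb^\alpha$ and $\sup_t\|\nabla^2\theta\|_2^2\le K_{\theta,*}$, with $B_*,K_{\theta,*}$ depending only on the data.
\item Every tentative bound may enter only with an additional strictly negative power of $\tb$. An example is $\int_0^T\|\nabla^2u\|_2^2\,dt\le C_M\tb^{-\alpha}$, which inherits $M$ from $\nabla H$.
\item The magnetic bound is then improved using these data constants.
\item The bootstrap number is fixed above $4C_*(\|\nabla H_0\|_2^2+\|\nabla H_0\|_4^2+B_*+1)$, and $\tb$ is chosen last.
\end{itemize}

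Likewise, the Gronwall factor must be $\exp(Cd_*^{-1}\int_0^T\|\nabla u\|_\infty^2\,dt)\le\exp(C_M\tb^{-2})$. This comes from splitting $\|\nabla u\|_\infty\|w\|_p^2$ against the damping; your $L^2_t$ integrals do not bound $\int_0^T\|\nabla u\|_\infty\,dt$ uniformly in $T$. Separately, your displayed damping rate should be $\rho\,\partial_\rho p_{\rm eff}/(\nu\theta^\alpha)=(H^2+R\rho\theta)/(\nu\theta^\alpha)$. With $\hb/\rhob$ in place of $\rho$ it would scale like $\tb^{3-\alpha}$.

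Second, the continuity argument on smooth approximations needs a continuation criterion that you do not supply. Closing the bootstrap on $[0,T^*)$ extends a Kawashima-type solution only if the low-order bounds prevent blow-up of its high Sobolev norm. Those bounds give only $\nabla\rho\in L^4$, not $L^\infty$. The paper proves the needed criterion in Lemma~\ref{lem:hp-persistence}, via an $H^m$ energy inequality with the integrable weight $\Phi=1+\|\nabla u\|_\infty^2+\|\nabla\theta\|_\infty^2+\|\nabla^2u\|_4^{4/3}+\|\nabla^2\theta\|_4^{4/3}$. It uses commutator estimates requiring only $\|\nabla\rho\|_4$, and writes the pressure as $\nabla\mathcal W-R\log\rho\,\nabla\theta$ so that $\nabla^{m+1}\rho$ never appears. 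Without such a lemma, your a priori bounds do not produce the global approximate solutions to which compactness is applied.
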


\begin{remark}[Meaning of the background assumptions]
Neither $C_0$ nor the absolute velocity and magnetic perturbations
are required to be small. No large-background-density condition is
imposed. However, the threshold may depend on $\rhob^{-1}$; there is
no assertion uniform as $\rhob\downarrow0$. Moreover, alignment implies
\begin{equation}\label{eq:fixed-density-initial}
 \|\rho_0-\rhob\|_{H^1\cap D^{1,4}}
 \le\frac{\rhob}{\hb}C_0.
\end{equation}
Thus the density perturbation is not independently arbitrary.
The theorem is a large-temperature, large-magnetic-background result
for a fixed absolute perturbation bound, not an unrestricted large-data
theorem and not a theorem for general three-dimensional magnetic geometry.
\end{remark}

\begin{remark}[Relation to the existing theory]
The weak temperature-dependent theory of \citep{LiSun2021} and the
constant-coefficient perturbative strong theories of
\citep{ZhaiZhang2025,ZhaiLiZhao2025} address different constitutive
or functional settings. Here both viscosity coefficients vary with
temperature, the heat conductivity has a different power, and the
magnetic equation has no diffusion. The proof tracks the background
powers while the density background is fixed. Its restricted alignment
is essential: it converts magnetic pressure into a density pressure
and supplies the damped magnetic-gradient equation.
No claim of optimality of \eqref{eq:fixed-working-exponents} is made.
\end{remark}

\paragraph{Proof strategy}
The relative energy is bounded independently of $\tb,\hb$ because
alignment makes the density perturbation of size $\hb^{-1}$.
On an arbitrary finite interval, the tentative magnetic damping
integral supplies the elliptic input needed for the first-order
estimates. Coupled material-derivative estimates then improve a
temporary Hessian bound. Their leading constants are independent of
the tentative magnetic bound. In the magnetic-gradient estimate,
the material forcing is therefore a bounded data-dependent term,
not a term that must tend to zero. Choosing the bootstrap number
first and the temperature last closes all the low-order estimates.

To turn these estimates into existence, we use smooth aligned
approximations. A mixed-exponent high-order Sobolev estimate propagates
their regularity using $\nabla\rho\in L^4$, without requiring
$\nabla\rho\in L^\infty$ uniformly in the approximation.
Compactness, a separate time-continuity argument and a difference-energy
estimate yield the strong solution and uniqueness in
\eqref{eq:fixed-target-space}. The original data are not assumed to
belong to a higher Sobolev or a critical Besov space.

Section~\ref{sec:fixed-foundation} establishes the basic energy and
effective flux. Sections~\ref{sec:fixed-derivatives} and
\ref{sec:fdgc} close the global low-order estimates.
Section~\ref{sec:high-persistence} constructs the solution and proves
Theorem~\ref{thm:global}.

\section{Fixed-density initialization and basic energy}
\label{sec:fixed-foundation}
All unmarked norms and spatial integrals are over $\R^2$.
We retain the deformation tensor $\frD(u)$ introduced in Section~\ref{sec:introduction},
now for the planar velocity, with $(\nabla u)_{ij}=\partial_j u^i$.
We write $\dot f=\Dt f=\partial_tf+u\cdot\nabla f$ and use $I$
for the two-dimensional identity matrix.
We also set
\[
 S_0(u)=2\mu\frD(u)+\lambda\dv u\,I,\qquad
 Q(u)=2\mu|\frD(u)|^2+\lambda(\dv u)^2,\qquad
 P_c=R\rho\theta+\frac12H^2 .
\]
Generic constants may depend on the fixed quantities
$\rhob,\rhob^{-1},C_0,\mu,\lambda,\kappa,R,\alpha,\beta,c_H,C_H$.
They never acquire an unstated dependence on $T,\tb,\hb$.
Constants marked $C_M$ may also depend on the specified
bootstrap number $M$.

We repeatedly use the two-dimensional inequalities
\begin{equation}\label{eq:fixed-GN}
 \begin{aligned}
 \|f\|_4^2&\le C\|f\|_2\|\nabla f\|_2,\qquad
 \|f\|_\infty\le C\|f\|_2^{1/3}\|\nabla f\|_4^{2/3},\\
 \|f\|_\infty&\le C\|f\|_2^{1/2}\|\nabla^2f\|_2^{1/2},
 \qquad
 \|f\|_\infty\le C\|f\|_4^{1/2}\|\nabla f\|_4^{1/2}.
 \end{aligned}
\end{equation}
Each inequality is used only when its right-hand side is finite;
see \citep{Gagliardo1959,Nirenberg1959}. These exponents are
two-dimensional and are not replaced by their three-dimensional
counterparts.

\subsection{Alignment, initial positivity and compatibility}
\begin{lemma}[Preservation of alignment]\label{lem:fixed-alignment}
Suppose a positive solution exists on $[0,T]$ with
$\nabla u\in L^1(0,T;L^\infty)$ and with the regularity needed
for the $L^2$ transport estimate. If the initial ratio in
\eqref{eq:fixed-background} is constant, then
\begin{equation}\label{eq:fixed-alignment}
 H=\frac{\hb}{\rhob}\rho,\qquad
 \rho-\rhob=\frac{\rhob}{\hb}(H-\hb),\qquad
 \nabla\rho=\frac{\rhob}{\hb}\nabla H
 \quad\hbox{on }[0,T].
\end{equation}
\end{lemma}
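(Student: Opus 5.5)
The plan is to observe that $\rho$ and $H$ solve the same linear transport equation $\partial_t f+\dv(uf)=0$ with the same velocity field. I will therefore work with the difference $w=H-\frac{\hb}{\rhob}\rho$. By linearity,
\[
\partial_t w+\dv(uw)=0,\qquad w(0)=H_0-\frac{\hb}{\rhob}\rho_0\equiv0
\]
by the alignment hypothesis in \eqref{eq:fixed-background}. Note that $w=(H-\hb)-\frac{\hb}{\rhob}(\rho-\rhob)$ is a difference of perturbations. Hence $w\in L^2$ under the assumed regularity, and the constant backgrounds cancel.

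The core step is an $L^2$ energy estimate for $w$. Multiplying by $w$ and integrating by parts gives
\[
\frac12\frac{d}{dt}\|w\|_2^2=-\int \dv(uw)\,w\dd x=-\frac12\int \dv u\,w^2\dd x\le \frac12\|\nabla u\|_\infty\|w\|_2^2 .
\]
Gronwall's inequality, together with $\nabla u\in L^1(0,T;L^\infty)$, then yields
\[
\|w(t)\|_2^2\le \|w(0)\|_2^2\exp\Big(\int_0^t\|\nabla u\|_\infty\dd s\Big)=0 .
\]
So $w\equiv0$ on $[0,T]$, which is the first identity in \eqref{eq:fixed-alignment}.

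The second identity follows by subtracting the constant relation $\hb=\frac{\hb}{\rhob}\rhob$. The third follows by differentiating in $x$, which is legitimate in the distributional sense since $w$ vanishes identically.

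The main obstacle is justifying the integration by parts at the stated regularity. This is the "regularity needed for the $L^2$ transport estimate" in the hypothesis. If $w$ is only in $L^\infty(0,T;L^2)$ and the identity $\int u\cdot\nabla(w^2/2)=-\frac12\int\dv u\,w^2$ is not directly available, I would use a DiPerna--Lions type mollification argument. One mollifies $w_\varepsilon=w*\eta_\varepsilon$, so that
\[
\partial_tw_\varepsilon+\dv(uw_\varepsilon)=r_\varepsilon ,
\]
where the commutator $r_\varepsilon\to0$ in $L^1(0,T;L^2)$ because $\nabla u\in L^1L^\infty$ and $w\in L^\infty L^2$. The energy estimate is then performed on $w_\varepsilon$ and one passes to $\varepsilon\to0$.

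Alternatively, in the setting where this lemma is applied (smooth aligned approximations and $H^1\cap D^{1,4}$ perturbations), $w\in H^1$. In that case the computation above is justified directly, with decay at infinity handled by a cutoff $\phi_R$, $R\to\infty$.
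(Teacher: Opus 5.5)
Your proposal is correct and matches the paper's proof: the difference $H-(\hb/\rhob)\rho$ satisfies $\partial_t w+\dv(uw)=0$ with zero initial data, and the $L^2$ estimate together with Gronwall's inequality and $\nabla u\in L^1(0,T;L^\infty)$ forces $w\equiv0$, after which the other two identities follow by subtracting or differentiating constants. Your remarks on justifying the integration by parts (a mollification commutator argument, or cutoffs when $w\in H^1$) simply spell out what the paper folds into its hypothesis of ``the regularity needed for the $L^2$ transport estimate.''
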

\begin{proof}
For $v=H-(\hb/\rhob)\rho$, the two transport equations imply
$v_t+\dv(uv)=0$ and $v(0)=0$. Consequently
\[
 \frac12\frac{\mathrm d}{\mathrm dt}\|v\|_2^2
 =-\frac12\int(\dv u)v^2\dd x
 \le C\|\nabla u\|_\infty\|v\|_2^2 .
\]
Gronwall's inequality gives $v=0$. The other identities follow
by subtracting or differentiating constants. This proof uses
finite-time transport uniqueness; $\Dt(H/\rho)=0$ alone would
not make a nonconstant initial ratio spatially constant.
\end{proof}

\begin{proposition}[Initial positivity and compatibility at fixed density]
\label{prop:fixed-initial}
Under \eqref{eq:fixed-background}--\eqref{eq:fixed-data},
the initial pointwise bands
\begin{equation}\label{eq:fixed-initial-bands}
 \frac34\rhob\le\rho_0\le\frac54\rhob,\qquad
 \frac34\tb\le\theta_0\le\frac54\tb,\qquad
 \frac34\hb\le H_0\le\frac54\hb
\end{equation}
hold once $\tb$ is sufficiently large depending on $C_0,c_H$.
Define
\begin{equation}\label{eq:fixed-compatibility}
 \begin{aligned}
 g_1={}&\rho_0^{-1/2}
 \big[-\dv(\theta_0^\alpha S_0(u_0))
                   +\nabla(R\rho_0\theta_0+H_0^2/2)\big],\\
 g_2={}&\rho_0^{-1/2}
 \big[-\kappa\dv(\theta_0^\beta\nabla\theta_0)
       -\theta_0^\alpha Q(u_0)+R\rho_0\theta_0\dv u_0\big].
 \end{aligned}
\end{equation}
For $2<\alpha<\beta$, these expressions belong to $L^2$ and
\begin{equation}\label{eq:fixed-initial-material}
 \|g_1\|_2\le C\tb^\alpha,\qquad
 \|g_2\|_2\le C\tb^\beta .
\end{equation}
These bounds initialize the smooth approximations used in the existence proof.
\end{proposition}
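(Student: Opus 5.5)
The bands come from the $L^\infty$ smallness of the perturbations relative to the backgrounds. Put $h_0=H_0-\hb$ and $\vartheta_0=\theta_0-\tb$. By the second inequality in \eqref{eq:fixed-GN},
\[
 \|h_0\|_\infty\le C\|h_0\|_2^{1/3}\|\nabla h_0\|_4^{2/3}\le CC_0 .
\]
By the third inequality, $\|\vartheta_0\|_\infty\le C\|\vartheta_0\|_2^{1/2}\|\nabla^2\vartheta_0\|_2^{1/2}\le CC_0$. Since $\hb\ge c_H\tb$, we get $|H_0/\hb-1|\le CC_0/(c_H\tb)\le1/4$ and $|\theta_0/\tb-1|\le CC_0/\tb\le 1/4$ once $\tb\ge L_0(C_0,c_H)$. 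The alignment $\rho_0=(\rhob/\hb)H_0$ then gives $\rho_0/\rhob=H_0/\hb$, so the density band follows with no further work. In particular $\rho_0^{-1/2}\le C$, and $\theta_0^{\gamma}\asymp\tb^{\gamma}$ for every real $\gamma$, uniformly.

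For $g_1$, expand $\dv(\theta_0^\alpha S_0(u_0))=\theta_0^\alpha\dv S_0(u_0)+\alpha\theta_0^{\alpha-1}S_0(u_0)\nabla\theta_0$. The first term is bounded in $L^2$ by $C\tb^\alpha\|\nabla^2u_0\|_2$. For the second, use Hölder $L^4\cdot L^4$ together with the first inequality of \eqref{eq:fixed-GN}:
\[
 \|\theta_0^{\alpha-1}\nabla u_0\nabla\theta_0\|_2\le C\tb^{\alpha-1}\|\nabla u_0\|_{H^1}\|\nabla\theta_0\|_{H^1}\le C\tb^{\alpha-1}.
\]
For the pressure part, write
\[
 \nabla(R\rho_0\theta_0+H_0^2/2)=R\theta_0\nabla\rho_0+R\rho_0\nabla\theta_0+H_0\nabla H_0 .
\]
By \eqref{eq:fixed-alignment}, $\|\nabla\rho_0\|_2\le C\hb^{-1}$, so the first term is $O(1)$ in $L^2$. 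The middle term is also $O(1)$. The magnetic term is bounded by $C\hb\|\nabla h_0\|_2\le C\tb$. Altogether $\|g_1\|_2\le C(\tb^\alpha+\tb)\le C\tb^\alpha$, using $\alpha>2>1$.

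For $g_2$, expand $\dv(\theta_0^\beta\nabla\theta_0)=\theta_0^\beta\Delta\theta_0+\beta\theta_0^{\beta-1}|\nabla\theta_0|^2$. This gives the bound
\[
 C\tb^\beta\|\nabla^2\theta_0\|_2+C\tb^{\beta-1}\|\nabla\theta_0\|_4^2\le C\tb^\beta .
\]
Next, $\|\theta_0^\alpha Q(u_0)\|_2\le C\tb^\alpha\|\nabla u_0\|_4^2\le C\tb^\alpha$, and $\|R\rho_0\theta_0\dv u_0\|_2\le C\tb$. Since $\beta>\alpha>1$, the total is at most $C\tb^\beta$.

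The only delicate point is that every constant must be independent of $\tb$ and $\hb$. The bands are exactly what allow replacing $\theta_0^{\alpha-1}$ and $\theta_0^{\beta-1}$ by background powers in both directions, which matters since the exponents may be noninteger. The alignment is what prevents the term $\theta_0\nabla\rho_0$ from producing a factor of order $\tb$. Measurability and $L^2$ membership follow from the same estimates, since all factors lie in $L^\infty$, $L^4$ or $H^1$.
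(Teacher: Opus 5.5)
Your proof is correct and follows the paper's argument essentially line by line. You use the same Gagliardo--Nirenberg $L^\infty$ bounds for $H_0-\hb$ and $\theta_0-\tb$ to get the bands, with the density band inherited from alignment, and the same product-rule expansions with $L^4\cdot L^4$ H\"older bounds to reach $\|g_1\|_2\le C\tb^\alpha$ and $\|g_2\|_2\le C\tb^\beta$; the only cosmetic point is that at $t=0$ the bound $\|\nabla\rho_0\|_2\le C\hb^{-1}$ follows directly from the definition $\rho_0=(\rhob/\hb)H_0$ in \eqref{eq:fixed-data}, with no need to invoke Lemma~\ref{lem:fixed-alignment}.
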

\begin{proof}
Equation \eqref{eq:fixed-GN} and $H^2(\R^2)\hookrightarrow L^\infty$
give
\[
 \|H_0-\hb\|_\infty+\|\theta_0-\tb\|_\infty\le CC_0 .
\]
Choose $\tb$ so that $CC_0\le\min\{\tb,\hb\}/4$.
Alignment gives the density band without any lower restriction on
the fixed number $\rhob$.

For the momentum compatibility function, expanding the derivatives
and using $\rho_0=(\rhob/\hb)H_0$ yields
\begin{align*}
 \|g_1\|_2\le C\big[&
 \tb^\alpha\|\nabla^2u_0\|_2+
 \tb^{\alpha-1}\|\nabla\theta_0\|_4\|\nabla u_0\|_4
 +(\tb+\rhob)\|\nabla H_0\|_2+
             \rhob\|\nabla\theta_0\|_2\big].
\end{align*}
The thermal expansion similarly gives
\begin{align*}
 \|g_2\|_2\le C\big[&
 \tb^\beta\|\nabla^2\theta_0\|_2+
 \tb^{\beta-1}\|\nabla\theta_0\|_4^2+
 \tb^\alpha\|\nabla u_0\|_4^2+
             \rhob\tb\|\nabla u_0\|_2\big].
\end{align*}
Here the constants include the factor from $\rho_0^{-1/2}$.
All derivative norms on the right are bounded by
\eqref{eq:fixed-data} and \eqref{eq:fixed-GN}. Since
$\tb\ge1$ and $\beta>\alpha>2$, the claims follow.
If a sufficiently regular solution is constructed,
$\sqrt{\rho_0}\dot u(0)=-g_1$ and
$\sqrt{\rho_0}\dot\theta(0)=-g_2$.
\end{proof}

\subsection{Relative energy and its improved background scale}
The estimates below are first obtained on an interval where
\begin{equation}\label{eq:fixed-bands}
 \frac12\rhob\le\rho\le\frac32\rhob,\qquad
 \frac12\tb\le\theta\le\frac32\tb,\qquad
 H=\frac{\hb}{\rhob}\rho .
\end{equation}
These are tentative pointwise bounds, not conclusions inferred
from local existence for arbitrary time.

\begin{proposition}[Sharp relative energy]\label{prop:fixed-energy}
For a sufficiently regular positive solution satisfying
\eqref{eq:fixed-data} and \eqref{eq:fixed-bands}, define
\begin{equation}\label{eq:fixed-relative-energy}
 \begin{aligned}
 \mathcal E(t)=\int\Big[&
 \tfrac12\rho|u|^2+\tfrac12|H-\hb|^2
 +R\tb\{\rho\log(\rho/\rhob)-\rho+\rhob\}
 +\rho\{\theta-\tb-\tb\log(\theta/\tb)\}\Big]\dd x .
 \end{aligned}
\end{equation}
Then
\begin{equation}\label{eq:fixed-energy-identity}
 \mathcal E'(t)+\int
 [\tb\theta^{\alpha-1}Q(u)
       +\kappa\tb\theta^{\beta-2}|\nabla\theta|^2]\dd x=0,
\end{equation}
and there is a constant $E_*$, independent of $T,\tb,\hb$
and the later bootstrap bounds, such that
\begin{equation}\label{eq:fixed-energy}
 \begin{aligned}
 &\sup_{0\le t\le T}\big(
 \|u(t)\|_2^2+\|H(t)-\hb\|_2^2+
                  \tb^{-1}\|\theta(t)-\tb\|_2^2\big)
+\int_0^T
 [\tb^\alpha\|\nabla u\|_2^2+
                  \tb^{\beta-1}\|\nabla\theta\|_2^2]\dd t
 \le E_* .
 \end{aligned}
\end{equation}
\end{proposition}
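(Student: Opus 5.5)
The proof splits into two parts: first the identity \eqref{eq:fixed-energy-identity}, then its integration in time with careful tracking of the background powers.

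\emph{Step 1: the identity.} Write $\mathcal E$ as the sum of the kinetic part, the magnetic part, the density entropy part and the thermal part, and differentiate each along the flow.
\begin{itemize}
\item For the kinetic part, multiply the momentum equation by $u$ and use the continuity equation. This gives $\frac{\mathrm d}{\mathrm dt}\int\frac12\rho|u|^2=\int P_c\dv u-\int\theta^\alpha Q(u)$, since $\int\dv(\theta^\alpha S_0(u))\cdot u=-\int\theta^\alpha Q(u)$.
\item For the magnetic part, $H_t+\dv(uH)=0$ yields $\frac{\mathrm d}{\mathrm dt}\int\frac12|H-\hb|^2=-\int\frac12H^2\dv u$. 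The constant $\hb$ terms drop because $\int\dv(uH)=0$.
\item For the density part, $\partial_t\{\rho\log(\rho/\rhob)-\rho+\rhob\}+\dv(\cdot\,u)=-\rho\dv u$, so this term contributes $-R\tb\int\rho\dv u$.
\item For the thermal part, use $\rho\dot\theta$ from the temperature equation to get $\frac{\mathrm d}{\mathrm dt}\int\rho(\theta-\tb-\tb\log(\theta/\tb))=\int(1-\tb/\theta)\big[\theta^\alpha Q(u)+\kappa\dv(\theta^\beta\nabla\theta)-R\rho\theta\dv u\big]$.
\end{itemize}
Adding the four pieces, the $\theta^\alpha Q$ terms leave $-\tb\int\theta^{\alpha-1}Q(u)$. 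The pressure terms $\int R\rho\theta\dv u$ cancel, and what remains is $-R\tb\int\rho\dv u+R\tb\int\rho\dv u=0$. The conduction term integrates by parts to $-\kappa\tb\int\theta^{\beta-2}|\nabla\theta|^2$. The magnetic pressure contributions $\pm\int\frac12H^2\dv u$ cancel. This gives \eqref{eq:fixed-energy-identity}. Every integration by parts is justified by the assumed regularity and decay of $(u,\theta-\tb,H-\hb)$.

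\emph{Step 2: a bound on $\mathcal E(0)$ independent of $\tb,\hb$.} This is the key point.
\begin{itemize}
\item The kinetic and magnetic terms are bounded by $CC_0^2$ directly from \eqref{eq:fixed-data}.
\item For the density term, \eqref{eq:fixed-initial-bands} gives $\rho\log(\rho/\rhob)-\rho+\rhob\le C\rhob^{-1}|\rho-\rhob|^2$. By alignment, \eqref{eq:fixed-density-initial} gives $\|\rho_0-\rhob\|_2\le\rhob C_0/\hb$. Hence this term is at most $C\tb\hb^{-2}C_0^2\le C\tb^{-1}$, using $\hb\ge c_H\tb$. This is precisely where alignment and $\hb\asymp\tb$ enter.
\item For the thermal term, $\theta-\tb-\tb\log(\theta/\tb)\le C\tb^{-1}|\theta-\tb|^2$ on the band $\theta\in[\frac34\tb,\frac54\tb]$, so it is $\le C\tb^{-1}C_0^2$.
\end{itemize}
Thus $\mathcal E(0)\le C$.

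\emph{Step 3: coercivity and the dissipation integrals.} Integrate the identity on $[0,T]$ and use the tentative bands \eqref{eq:fixed-bands} from below.
\begin{itemize}
\item Since $\rho\ge\rhob/2$, the kinetic part controls $\|u\|_2^2$.
\item Since $\theta\le\frac32\tb$, the thermal integrand satisfies $\theta-\tb-\tb\log(\theta/\tb)\ge c\tb^{-1}|\theta-\tb|^2$, which gives the $\tb^{-1}\|\theta-\tb\|_2^2$ bound.
\item The magnetic part is already $\|H-\hb\|_2^2$, and the density part is nonnegative.
\item For the viscous dissipation, $\theta^{\alpha-1}\ge c\tb^{\alpha-1}$, so $\tb\theta^{\alpha-1}Q(u)\ge c\tb^\alpha Q(u)$.
\item For the conductive dissipation, $\tb\theta^{\beta-2}\ge c\tb^{\beta-1}$.
\end{itemize}

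\emph{Main obstacle.} The delicate step is turning $\int Q(u)$ into $\|\nabla u\|_2^2$ under only $2\mu+3\lambda\ge0$, since $\lambda$ may be negative. I would first use the pointwise bound $|\frD(u)|^2\ge\frac12(\dv u)^2$ in two dimensions. This gives $Q\ge(\mu+\lambda)(\dv u)^2$ plus a positive shear part, and it suffices when $\mu+\lambda\ge0$. Without that sign, a pointwise bound fails. Instead I would use the whole-space identity $\int 2|\frD(u)|^2=\int|\nabla u|^2+\int(\dv u)^2$, combined with the trace-free decomposition. However, the weight $\theta^{\alpha-1}$ obstructs the integral identity. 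The fallback is to write $\theta^{\alpha-1}=\tb^{\alpha-1}+(\theta^{\alpha-1}-\tb^{\alpha-1})$. The second piece is pointwise at most a fixed fraction of $\tb^{\alpha-1}$ only if the band is tightened. Alternatively, I would check that $2\mu+3\lambda\ge0$ already forces $Q\ge c|\frD(u)|^2$ in 2D, via $|\frD|^2\ge\frac12(\dv u)^2$ and $2\mu+2\lambda\ge 2\mu-\frac43\mu>0$. This indeed holds, since $\lambda\ge-\frac23\mu$ gives $\mu+\lambda\ge\frac{\mu}{3}>0$. Then $Q\ge c|\frD(u)|^2$ pointwise, and Korn's identity on $\R^2$ gives $\int|\frD(u)|^2\ge\frac12\|\nabla u\|_2^2$. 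This yields \eqref{eq:fixed-energy} with $E_*$ depending only on the fixed quantities.
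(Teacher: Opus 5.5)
Your argument is correct and follows the paper's proof: the same energy computation with the heat equation tested by $1-\tb/\theta$, the Taylor/alignment bound $\mathcal E(0)\le CC_0^2(1+\rhob+R\rhob\tb/\hb^2+\rhob/\tb)$ using $\hb\ge c_H\tb$, and the coercivity $Q(u)\ge\frac{2\mu}{3}|\frD(u)|^2$ (from $\mu+\lambda\ge\mu/3$) followed by the whole-space identity $\int|\frD(u)|^2=\frac12\int(|\nabla u|^2+(\dv u)^2)$. One point to tidy: your intermediate integrals $\int P_c\dv u$, $\int\frac12H^2\dv u$ and $\int\rho\dv u$ are not absolutely convergent on $\R^2$, because $\dv u\notin L^1$ in general, so you should subtract the far-field values first, as the paper does, writing $\int(P_c-R\rhob\tb-\tfrac12\hb^2)\dv u$, $-\frac12\int(H^2-\hb^2)\dv u$ and $-R\tb\int(\rho-\rhob)\dv u$ (for $G=\rho\log(\rho/\rhob)-\rho+\rhob$ the pointwise identity is $\partial_tG+\dv(Gu)=-(\rho-\rhob)\dv u$), after which your cancellations hold literally.
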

\begin{proof}
Spatial cutoffs may be used before passage to the whole space.
Only perturbation energies are integrated. The momentum and
magnetic equations give, after their pressure works cancel,
\begin{equation}\label{eq:fixed-kinmag}
 \frac{\mathrm d}{\mathrm dt}\int
 [\rho|u|^2/2+|H-\hb|^2/2]\dd x
 =R\int(\rho\theta-\rhob\tb)\dv u\dd x
                         -\int\theta^\alpha Q(u)\dd x .
\end{equation}
In particular, the magnetic contribution alone is
$-\frac12\int(H^2-\hb^2)\dv u$, which is finite.
The continuity equation yields
\begin{equation}\label{eq:fixed-density-energy}
 \frac{\mathrm d}{\mathrm dt}
 \int R\tb\{\rho\log(\rho/\rhob)-\rho+\rhob\}\dd x
 =-R\tb\int(\rho-\rhob)\dv u\dd x .
\end{equation}
Testing the thermal equation by $1-\tb/\theta$ gives
\begin{align}\label{eq:fixed-thermal-energy}
 &\frac{\mathrm d}{\mathrm dt}\int
       \rho\{\theta-\tb-\tb\log(\theta/\tb)\}\dd x\notag\\
 &=-R\int\rho(\theta-\tb)\dv u\dd x+
       \int(1-\tb/\theta)\theta^\alpha Q(u)\dd x
       -\kappa\tb\int\theta^{\beta-2}|\nabla\theta|^2\dd x .
\end{align}
Their sum proves \eqref{eq:fixed-energy-identity}.

Taylor's formula on the pointwise bands and alignment imply
\begin{equation}\label{eq:fixed-energy-equivalence}
 \mathcal E(t)\asymp
 \rhob\|u\|_2^2+
       \left(1+\frac{R\rhob\tb}{\hb^2}\right)\|H-\hb\|_2^2+
       \frac{\rhob}{\tb}\|\theta-\tb\|_2^2 .
\end{equation}
Consequently the initial energy is bounded by
\[
 \mathcal E(0)\le CC_0^2
       \left(\rhob+1+\frac{R\rhob\tb}{\hb^2}
                            +\frac{\rhob}{\tb}\right)\le C .
\]
This is the principal improvement over an $O(\rhob\tb)$ bound.

To handle $\lambda<0$, first combine the stresses. With
$\frD_0(u)=\frD(u)-\frac12\dv u\,I$,
\begin{equation}\label{eq:fixed-coercivity}
 Q(u)=2\mu|\frD_0(u)|^2+(\mu+\lambda)(\dv u)^2
       \ge \frac{2\mu}{3}|\frD(u)|^2 .
\end{equation}
Using both temperature bounds and
$\int|\frD(u)|^2=\frac12\int(|\nabla u|^2+|\dv u|^2)$,
the dissipation controls
$c\tb^\alpha\|\nabla u\|_2^2+
c\tb^{\beta-1}\|\nabla\theta\|_2^2$.
Integration proves \eqref{eq:fixed-energy}.
No finite $L^2$ or $L^4$ norm of $H$ itself has been used.
\end{proof}

\subsection{The exact magnetic effective flux}
With $\nu$ as in \eqref{eq:global-damping}, define
\begin{equation}\label{eq:fixed-flux}
 F=\nu\dv u-R(\rho\theta^{1-\alpha}-\rhob\tb^{1-\alpha})
       -\tfrac12(H^2\theta^{-\alpha}-\hb^2\tb^{-\alpha}).
\end{equation}
For $\omega=\partial_1u^2-\partial_2u^1$ and
$\nabla^\perp=(-\partial_2,\partial_1)$, dividing momentum by
$\theta^\alpha$ gives
\[
 \mu\Delta u+(\mu+\lambda)\nabla\dv u
 =\rho\theta^{-\alpha}\dot u+
       \theta^{-\alpha}\nabla P_c
       -\alpha\theta^{-1}S_0(u)\nabla\theta .
\]
Since $\Delta u=\nabla\dv u+\nabla^\perp\omega$, differentiation
of the weighted pressure in \eqref{eq:fixed-flux} yields
\begin{equation}\label{eq:fixed-flux-identity}
 \nabla F+\mu\nabla^\perp\omega
 =\rho\theta^{-\alpha}\dot u+
       \alpha\theta^{-\alpha-1}P_c\nabla\theta
       -\alpha\theta^{-1}S_0(u)\nabla\theta .
\end{equation}
Taking divergence in \eqref{eq:fixed-flux-identity}, we have
\begin{equation}\label{eq:fixed-flux-poisson}
 \begin{aligned}
 \dv(\nabla^\perp\omega)&=-\partial_1\partial_2\omega
                              +\partial_2\partial_1\omega=0,\\
 \Delta F&=\dv\big(\rho\theta^{-\alpha}\dot u
       +\alpha\theta^{-\alpha-1}P_c\nabla\theta
       -\alpha\theta^{-1}S_0(u)\nabla\theta\big).
 \end{aligned}
\end{equation}
For $1<p<\infty$, the Fourier multipliers of
$\partial_i\Delta^{-1}\partial_j$ are
$\xi_i\xi_j/|\xi|^2$.  Thus, for $p=2,4$,
\begin{equation}\label{eq:fixed-flux-projection}
 \begin{aligned}
 \nabla F&=\nabla\Delta^{-1}\dv\big(
       \rho\theta^{-\alpha}\dot u
       +\alpha\theta^{-\alpha-1}P_c\nabla\theta
       -\alpha\theta^{-1}S_0(u)\nabla\theta\big),\\
 \|\nabla F\|_p&\le C_p\big(
       \|\rho\theta^{-\alpha}\dot u\|_p
       +\alpha\|\theta^{-\alpha-1}P_c\nabla\theta\|_p
       +\alpha\|\theta^{-1}S_0(u)\nabla\theta\|_p\big).
 \end{aligned}
\end{equation}
Here the decaying perturbation normalization removes the harmonic
ambiguity.  By alignment and \eqref{eq:fixed-bands},
\begin{equation}\label{eq:fixed-flux-coefficients}
 \begin{aligned}
 |H|\le\tfrac32\hb\le C\tb,\quad
 |P_c|\le C(\tb+\tb^2)\le C\tb^2,\quad
 |\rho\theta^{-\alpha}|\le C\tb^{-\alpha},\\
 |\theta^{-\alpha-1}P_c|\le C\tb^{1-\alpha},
 |\theta^{-1}S_0(u)\nabla\theta|
   \le C\tb^{-1}|\nabla u|\,|\nabla\theta|.
 \end{aligned}
\end{equation}
Inserting \eqref{eq:fixed-flux-coefficients} into
\eqref{eq:fixed-flux-projection} gives
\begin{equation}\label{eq:fixed-flux-estimate}
 \|\nabla F\|_p\le C\big[
 \tb^{-\alpha}\|\dot u\|_p+
 \tb^{-1}\|\nabla u\,\nabla\theta\|_p+
 \tb^{1-\alpha}\|\nabla\theta\|_p\big].
\end{equation}
Every temperature derivative of the magnetic pressure has been
retained. In particular, this is not an estimate obtained by treating
a variable viscosity as constant.

For the velocity estimate, expand the stress and pressure separately:
\begin{equation}\label{eq:fixed-lame-expansion}
 \begin{aligned}
 \dv(\theta^\alpha S_0(u))
   &=\theta^\alpha\big(\mu\Delta u
                 +(\mu+\lambda)\nabla\dv u\big)
       +\alpha\theta^{\alpha-1}S_0(u)\nabla\theta,\\
 \nabla P_c
   &=R\theta\nabla\rho+R\rho\nabla\theta+H\nabla H
   =\left(R\theta\frac{\rhob}{\hb}+H\right)\nabla H
                                            +R\rho\nabla\theta,\\
 \|\theta^{-\alpha}\nabla P_c\|_p
   &\le C\tb^{-\alpha}\big(\tb\|\nabla H\|_p
                                      +\|\nabla\theta\|_p\big).
 \end{aligned}
\end{equation}
Since $\mu>0$ and $\nu>0$, the constant-coefficient
Lam\'e estimate gives
\begin{equation}\label{eq:fixed-lame-elliptic-detail}
 \begin{aligned}
 \|\nabla^2u\|_p
 &\le C_p\|\mu\Delta u+(\mu+\lambda)\nabla\dv u\|_p\\
 &\le C_p\big(\|\rho\theta^{-\alpha}\dot u\|_p
       +\|\theta^{-\alpha}\nabla P_c\|_p
       +\alpha\|\theta^{-1}S_0(u)\nabla\theta\|_p\big).
 \end{aligned}
\end{equation}
Combining \eqref{eq:fixed-flux-coefficients},
\eqref{eq:fixed-lame-expansion} and
\eqref{eq:fixed-lame-elliptic-detail}, we obtain
\begin{equation}\label{eq:fixed-lame}
 \|\nabla^2u\|_p\le C\big[
 \tb^{-\alpha}\|\dot u\|_p+
 \tb^{1-\alpha}\|\nabla H\|_p+
 \tb^{-\alpha}\|\nabla\theta\|_p+
 \tb^{-1}\|\nabla u\,\nabla\theta\|_p\big].
\end{equation}

\section{Global first-order and material-derivative estimates}
\label{sec:fixed-derivatives}

\subsection{First-order estimates on an arbitrary finite interval}
\label{subsec:fd-short-time}

This subsection proves an a priori improvement on an arbitrary finite
interval. The magnetic damping integral is included explicitly in the
tentative bounds; its own improvement is proved in Section~\ref{sec:fdgc}.

Retain the background assumptions \eqref{eq:fixed-background} and the
notation of Section~\ref{sec:fixed-foundation}. Introduce the perturbations
\[
 h=H-\hb,\qquad q=\theta-\tb,
\]
and, for the estimates in this section, write $d=\dv u$.
Magnetic-gradient estimates will be written in terms of $\nabla H$,
which equals $\nabla h$. We consider the system
\begin{equation}\label{eq:fdst-system}
 \begin{cases}
  \Dt\rho+\rho d=0,\qquad \Dt H+Hd=0,\\
  \rho\dot u+\nabla P_c=\operatorname{div}(\theta^\alpha S_0(u)),\\
  \rho\dot\theta+R\rho\theta d
      =\theta^\alpha Q(u)+\kappa\operatorname{div}
                         (\theta^\beta\nabla\theta),
 \end{cases}
\end{equation}
Assume
\begin{equation}\label{eq:fdst-parameters}
 \mu,\kappa,R>0,\qquad 2\mu+3\lambda\ge0,\qquad
 2<\alpha<\beta<\alpha+2 .
\end{equation}
The constants denoted by \(C_*\) below may depend on
\[
 \rhob,\ \rhob^{-1},\ C_0,\ \mu,\lambda,\kappa,R,\alpha,\beta,c_H,C_H,
\]
but are independent of \(\hb,\tb\), the interval length and the
bootstrap constant \(M\).  Constants \(C_M\) may additionally depend
on \(M\).  Neither notation permits an implicit dependence on a
higher norm of a smooth approximation.

\begin{lemma}[Global first-order improvement]
\label{lem:fd-short-time}
Let \(M\ge1\) be fixed and \(T>0\).  In this lemma, $d_*$ denotes
the positive background damping rate defined in
\eqref{eq:global-damping}. It satisfies
\begin{equation}\label{eq:fdst-damping-comparison}
 c\tb^{2-\alpha}\le d_*\le C\tb^{2-\alpha}.
\end{equation}
It is constant in space and time, depends only on the background
state and the physical parameters, and is independent of $M$ and $T$.
It is the value at $(\rho,\theta,H)=(\rhob,\tb,\hb)$ of the
coefficient $(H^2+R\rho\theta)/(\nu\theta^\alpha)$ in the damped
magnetic-gradient equation.  Suppose that a sufficiently
regular positive solution of \eqref{eq:fdst-system} exists on
\([0,T]\), with finite perturbation norms and the prescribed constant
far-field state.  Assume that alignment is preserved,
\begin{equation}\label{eq:fdst-alignment}
 H=\frac{\hb}{\rhob}\rho,\qquad \rho-\rhob=\frac{\rhob}{\hb} h,
\end{equation}
and that
\begin{equation}\label{eq:fdst-bands}
 \frac{\rhob}2\le\rho\le\frac{3\rhob}2,\qquad
 \frac\tb2\le\theta\le\frac{3\tb}2
 \quad\hbox{on }\mathbb R^2\times[0,T].
\end{equation}
The initial data satisfy
\begin{equation}\label{eq:fdst-data}
 \|h_0\|_{H^1\cap D^{1,4}}+\|u_0\|_{H^2}
       +\|q_0\|_{H^2}\le C_0,\qquad
 \rho_0=\rhob+\frac{\rhob}{\hb} h_0 .
\end{equation}
Define
\begin{align}
 \mathcal A_1(T)
   &=\tb^\alpha\sup_{0\le t\le T}\|\nabla u(t)\|_2^2
           +\int_0^T\|\sqrt\rho\,\dot u\|_2^2\,dt,
       \label{eq:fdst-A1}\\
 \mathcal A_2(T)
   &=\tb^\beta\sup_{0\le t\le T}\|\nabla\theta(t)\|_2^2
           +\int_0^T\|\sqrt\rho\,\dot\theta\|_2^2\,dt.
       \label{eq:fdst-A2}
\end{align}
Suppose, explicitly, that the following bootstrap bounds hold:
\begin{equation}\label{eq:fdst-bootstrap}
 \mathcal A_1(T)\le M\tb^\alpha,\qquad
 \mathcal A_2(T)\le M\tb^\beta,\qquad
 \sup_{0\le t\le T}\|\nabla H(t)\|_2^2
          +d_*\int_0^T\|\nabla H\|_2^2\,dt\le M .
\end{equation}
There is a threshold \(\tb_{\mathrm{ell}}(M)\), depending only on
the quantities allowed above, such that, for
\(\tb\ge\tb_{\mathrm{ell}}(M)\),
\begin{equation}\label{eq:fdst-hessian-integrals}
 \int_0^T\|\nabla^2u\|_2^2\,dt\le C_M\tb^{-\alpha},
 \qquad
 \int_0^T\|\nabla^2\theta\|_2^2\,dt\le C_M\tb^{-\beta},
\end{equation}
and
\begin{align}
 \mathcal A_1(T)\le{}&C_*\tb^\alpha
   +C_M\left[
     \tb^{(\alpha+\beta)/2-1}
     +\tb^{(\beta-\alpha)/2}
     +1+\tb^{2-\alpha}\right],
       \label{eq:fdst-A1-improvement}\\
 \mathcal A_2(T)\le{}&C_*\tb^\beta
   +C_M\left[
     \tb^{(\alpha+\beta)/2}
     +\tb^{1+(\beta-\alpha)/2}
     +\tb^{\beta-1}
     +\tb^{(\beta-\alpha)/2}\right].
       \label{eq:fdst-A2-improvement}
\end{align}
Consequently, there are \(K_1,K_2\), independent of \(M,\hb,\tb,T\),
such that, after increasing the threshold to \(\tb_*(M)\),
\begin{equation}\label{eq:fdst-final-improvement}
 \mathcal A_1(T)\le K_1\tb^\alpha,\qquad
 \mathcal A_2(T)\le K_2\tb^\beta .
\end{equation}
No lower bound of the form \(\rhob\ge L\) is required.
\end{lemma}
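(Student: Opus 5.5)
The plan has three stages: the Hessian integrals \eqref{eq:fdst-hessian-integrals}, then the $\mathcal A_1$ and $\mathcal A_2$ energy identities, then absorption. We first derive \eqref{eq:fdst-hessian-integrals} from the elliptic bounds of Section~\ref{sec:fixed-foundation}. For the velocity we use \eqref{eq:fixed-lame} with $p=2$, square it and integrate in time. The term $\tb^{-2\alpha}\int\|\dot u\|_2^2$ is at most $CM\tb^{-\alpha}$ by \eqref{eq:fdst-bootstrap} and the density band. The term $\tb^{2-2\alpha}\int\|\nabla H\|_2^2$ equals $\tb^{2-2\alpha}d_*^{-1}\cdot d_*\int\|\nabla H\|_2^2\le CM\tb^{-\alpha}$, using \eqref{eq:fdst-damping-comparison}. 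The term $\tb^{-2\alpha}\int\|\nabla\theta\|_2^2$ is controlled by \eqref{eq:fixed-energy}. The cross term $\tb^{-2}\int\|\nabla u\,\nabla\theta\|_2^2$ is bounded by $\tb^{-2}\|\nabla u\|_4^2\|\nabla\theta\|_4^2$. We then apply \eqref{eq:fixed-GN} to both factors, use the sup bounds $\|\nabla u\|_2^2\le M$ and $\|\nabla\theta\|_2^2\le M$ from \eqref{eq:fdst-bootstrap}, and absorb via Young's inequality the resulting $\|\nabla^2u\|_2\|\nabla^2\theta\|_2$. The temperature Hessian is handled in parallel. We expand $\kappa\dv(\theta^\beta\nabla\theta)$, divide by $\kappa\theta^\beta$, and use $\|\nabla^2\theta\|_2\le C\|\Delta\theta\|_2$. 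This gives
\[
\|\nabla^2\theta\|_2\le C\big[\tb^{-\beta}\|\dot\theta\|_2+\tb^{1-\beta}\|\nabla u\|_2+\tb^{\alpha-\beta}\|\nabla u\|_4^2+\tb^{-1}\|\nabla\theta\|_4^2\big].
\]
Each quadratic term carries a negative power of $\tb$ times a bootstrap quantity. Summing the two Hessian inequalities and taking $\tb\ge\tb_{\rm ell}(M)$ large absorbs the Hessian factors on the right and yields \eqref{eq:fdst-hessian-integrals}.

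Next we prove \eqref{eq:fdst-A1-improvement}. We test the momentum equation by $\dot u$ and integrate by parts in the viscous term. The leading part gives $\frac{d}{dt}\frac12\int\theta^\alpha Q(u)$ plus commutator terms. These are $\int\alpha\theta^{\alpha-1}\theta_tQ(u)$, where we write $\theta_t=\dot\theta-u\cdot\nabla\theta$, and terms of the form $\theta^\alpha\nabla u\,\nabla u\,\nabla u$ arising from $\nabla\dot u=\nabla u_t+u\cdot\nabla\nabla u+\nabla u\nabla u$. The pressure term $-\int P_c\,\dv\dot u$ is treated as a time derivative $-\frac{d}{dt}\int (P_c-\bar P_c)\dv u$ plus terms involving $D_tP_c$. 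These use the transport equations and alignment, $D_tP_c=-(R\rho\theta+H^2)d+R\rho\dot\theta$. Time integration together with the coercivity \eqref{eq:fixed-coercivity} gives the main term $C_*\tb^\alpha$, which comes from $\tb^\alpha\|\nabla u_0\|_2^2$ and from $\int(P_c-\bar P_c)\dv u\le C\tb\|\nabla H\|_2\|\nabla u\|_2$-type quantities through Young's inequality. The remaining integrals are bounded using \eqref{eq:fdst-hessian-integrals}, \eqref{eq:fixed-energy}, \eqref{eq:fixed-GN} and the bootstrap bounds, and they produce exactly the subleading powers listed. For instance, $\tb^{\alpha-1}\int\|\dot\theta\|_2\|\nabla u\|_4^2$ leads to $\tb^{(\alpha+\beta)/2-1}$, and the $H^2d^2$ pressure term leads to $\tb^{2-\alpha}$. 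For \eqref{eq:fdst-A2-improvement} we test the thermal equation by $\dot\theta$. The conduction term gives $\frac{d}{dt}\frac\kappa2\int\theta^\beta|\nabla\theta|^2$ plus analogous commutators. The sources $\theta^\alpha Q(u)\dot\theta$ and $R\rho\theta d\dot\theta$ are estimated with $\|\nabla u\|_4^2$ and \eqref{eq:fdst-hessian-integrals}, half of $\int\rho\dot\theta^2$ is absorbed, and this produces $\tb^{(\alpha+\beta)/2}$, $\tb^{1+(\beta-\alpha)/2}$, and the other terms.

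Finally, we use \eqref{eq:fdst-parameters} to show that every bracketed power is strictly smaller than the leading one. In \eqref{eq:fdst-A1-improvement}, $(\alpha+\beta)/2-1<\alpha$ because $\beta<\alpha+2$, $(\beta-\alpha)/2<1<\alpha$, and $2-\alpha<0$. In \eqref{eq:fdst-A2-improvement}, $(\alpha+\beta)/2<\beta$ because $\alpha<\beta$, and $1+(\beta-\alpha)/2<\beta$ because $\beta+\alpha>2$. Choosing $\tb_*(M)$ so that $C_M\tb^{-\epsilon}\le1$ then yields \eqref{eq:fdst-final-improvement} with $K_i=C_*+1$. The main obstacle is the cubic and commutator terms in the $\dot u$ estimate, especially $\int\theta^\alpha|\nabla u|^3$ and the $\theta_t$ term. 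For these the $\tb$ powers must be tracked exactly, and the Hessian bound must be used only in its integrated form. If the crude bound $\|\nabla u\|_3^3\le\|\nabla u\|_2^2\|\nabla^2u\|_2$ is not good enough, we would replace $\nabla^2 u$ through the splitting into $F$ and $\omega$ given by \eqref{eq:fixed-flux-estimate}.
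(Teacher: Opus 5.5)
Your plan follows the paper's proof. It uses the coupled Lam\'e and heat elliptic estimates closed with the tentative damping integral, then the $\dot u$-tested identity for $\Psi=\frac12\int\theta^\alpha Q(u)\,dx-\int(P_c-\bar P_c)\dv u\,dx$ and the $\dot\theta$-tested identity for $\frac\kappa2\int\theta^\beta|\nabla\theta|^2\,dx$. Your final exponent bookkeeping is also the paper's and is correct. Two steps need repair.

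\textbf{Pressure cross term in the coercivity of $\Psi$.} The bound $|\int(P_c-\bar P_c)\dv u\,dx|\lesssim\tb\|\nabla H\|_2\|\nabla u\|_2$ is not available.
\begin{itemize}
\item An estimate $|\int f\,\dv u\,dx|\le C\|\nabla f\|_2\|\nabla u\|_2$ cannot hold on $\R^2$. Rescale $f(x/s)$, $u(x/s)$: the left side grows like $s$, while the right side is invariant.
\item It also ignores the thermal part $R\rho q$ of $P_c-\bar P_c$, which $\nabla H$ does not control.
\item The fix is the relative energy \eqref{eq:fixed-energy}: $\|P_c-\bar P_c\|_2\le C(\tb\|h\|_2+\|q\|_2)\le C\tb$. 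Young's inequality then gives $\Psi\ge c_*\tb^\alpha\|\nabla u\|_2^2-C_*\tb^{2-\alpha}$ with an $M$-independent constant, as in \eqref{eq:fdst-Psi-coercive}.
\item Alternatively, move the derivative onto $u$ and use $\|u\|_2\le C$. This costs only a lower-order $C_M\tb$.
\end{itemize}

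\textbf{Temperature Hessian rate.} Adding the two squared Hessian inequalities only gives $\int_0^T(\|\nabla^2u\|_2^2+\|\nabla^2\theta\|_2^2)\,dt\le C_M\tb^{-\alpha}$. To reach the asserted $\tb^{-\beta}$ for $\theta$, reinsert the velocity bound into the temperature inequality. There the coupling coefficient $C_M\tb^{2\alpha-2\beta}$ turns $\tb^{-\alpha}$ into $\tb^{\alpha-2\beta}\le\tb^{-\beta}$.

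\textbf{Remarks on the rest.}
\begin{itemize}
\item The obstacle you anticipate is not one. The crude interpolation already gives $\tb^\alpha\int_0^T\|\nabla u\|_2^2\|\nabla^2u\|_2\,dt\le\tb^\alpha\sup_t\|\nabla u\|_2\,(\int_0^T\|\nabla u\|_2^2\,dt)^{1/2}(\int_0^T\|\nabla^2u\|_2^2\,dt)^{1/2}\le C_M$. This is exactly the $1$ in \eqref{eq:fdst-A1-improvement}, so no splitting into $F$ and $\omega$ is needed.
\item If you integrate the convective piece $\int\theta^\alpha S_0(u):(u\cdot\nabla)\nabla u\,dx$ by parts, its $u\cdot\nabla\theta$ part recombines with the $\theta_t$ term into $\frac12\int(\alpha\theta^{\alpha-1}\dot\theta+\theta^\alpha\dv u)Q(u)\,dx$. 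The paper gets this directly from \eqref{eq:fdst-transport-rules}, and no $\|u\|_\infty$ enters.
\item The paper instead uses the bootstrap bound on $\int_0^T\|\sqrt\rho\,\dot\theta\|_2^2\,dt$ in the thermal identity. Absorbing half of $\|\sqrt\rho\,\dot\theta\|_2^2$ there, as you propose, is a valid variant with smaller remainders: $\tb^\alpha$, $\tb^{2-\alpha}$, $\tb^{\beta-2}$ instead of $\tb^{(\alpha+\beta)/2}$, $\tb^{1+(\beta-\alpha)/2}$, $\tb^{\beta-1}$. Your text mixes the two bookkeepings, but either one closes.
\end{itemize}
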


\begin{proof}
The calculations are performed for sufficiently regular solutions.
Every estimate below uses only the stated data, pointwise bands and
bootstrap bounds.  Spatial integrations may first be made with
cutoffs.  All pressure work is renormalized by the constant far-field
pressure, so no integral of the nonzero background magnetic energy
is used.

\paragraph{The zero-order input}
Proposition~\ref{prop:fixed-energy}, applied on $[0,T]$, gives
\begin{align}\label{eq:fdst-basic}
 &\sup_{0\le t\le T}
       \big(\|u(t)\|_2^2+\|h(t)\|_2^2+
                         \tb^{-1}\|q(t)\|_2^2\big)
 +\tb^\alpha\int_0^T\|\nabla u\|_2^2\,dt
       +\tb^{\beta-1}\int_0^T\|\nabla\theta\|_2^2\,dt
 \le C_* .
\end{align}

\paragraph{Coupled elliptic estimates}
The bootstrap assumptions and the zero-order estimate give
\begin{equation}\label{eq:fdst-time-inputs}
 \begin{gathered}
  \sup_{t\le T}(\|\nabla u\|_2^2+\|\nabla\theta\|_2^2)\le2M,\qquad
  \int_0^T \|\sqrt\rho\,\dot u\|_2^2\,dt\le M\tb^\alpha,\qquad
  \int_0^T \|\sqrt\rho\,\dot\theta\|_2^2\,dt\le M\tb^\beta,\\
  \int_0^T \|\nabla u\|_2^2\,dt\le C_*\tb^{-\alpha},\qquad
  \int_0^T \|\nabla\theta\|_2^2\,dt\le C_*\tb^{1-\beta}.
 \end{gathered}
\end{equation}
Dividing momentum by \(\theta^\alpha\) yields a
constant-coefficient Lam\'e operator:
\[
 \mu\Delta u+(\mu+\lambda)\nabla d
 =\rho\theta^{-\alpha}\dot u+\theta^{-\alpha}\nabla P_c
       -\alpha\theta^{-1}S_0(u)\nabla\theta.
\]
Since alignment implies
\[
 \nabla P_c=\left(R\theta\frac{\rhob}{\hb}+H\right)\nabla H
                   +R\rho\nabla\theta ,
\]
the \(L^2\) Lam\'e estimate, followed by the two-dimensional
inequality \(\|v\|_4^2\le C\|v\|_2\|\nabla v\|_2\), gives
\begin{align}\label{eq:fdst-lame}
 \|\nabla^2u\|_2
 &\le C_*\left[
    \tb^{-\alpha}\|\sqrt\rho\,\dot u\|_2+\tb^{1-\alpha}\|\nabla H\|_2
       +\tb^{-\alpha}\|\nabla\theta\|_2
       +\tb^{-1}\|\nabla u\|_4\|\nabla\theta\|_4\right]\notag\\
 &\le C_*\left[
    \tb^{-\alpha}\|\sqrt\rho\,\dot u\|_2+\tb^{1-\alpha}\|\nabla H\|_2
       +\tb^{-\alpha}\|\nabla\theta\|_2+\tb^{-1}(\|\nabla u\|_2\|\nabla^2u\|_2\|\nabla\theta\|_2\|\nabla^2\theta\|_2)^{1/2}\right]\notag\\
 &\le \tfrac12\|\nabla^2u\|_2+
 C_*\left[
    \tb^{-\alpha}\|\sqrt\rho\,\dot u\|_2+\tb^{1-\alpha}\|\nabla H\|_2
       +\tb^{-\alpha}\|\nabla\theta\|_2+\tb^{-2}\|\nabla u\|_2\|\nabla\theta\|_2\|\nabla^2\theta\|_2\right].
\end{align}
Thus
\begin{equation}\label{eq:fdst-lame-square}
 \|\nabla^2u\|_2^2\le C_*\left[
  \tb^{-2\alpha}\|\sqrt\rho\,\dot u\|_2^2+\tb^{2-2\alpha}\|\nabla H\|_2^2+
  \tb^{-2\alpha}\|\nabla\theta\|_2^2+\tb^{-4}\|\nabla u\|_2^2\|\nabla\theta\|_2^2\|\nabla^2\theta\|_2^2\right].
\end{equation}
Integrating \eqref{eq:fdst-lame-square} in time, and applying these assumptions, we obtain
\begin{equation}\label{eq:fdst-Iu}
 \left(\int_0^T\|\nabla^2u\|_2^2\,dt\right)\le C_M\tb^{-\alpha}+C_M\tb^{-4}\left(\int_0^T\|\nabla^2\theta\|_2^2\,dt\right) .
\end{equation}
Here the magnetic term is estimated by the tentative damping integral:
\[
 \tb^{2-2\alpha}\int_0^T\|\nabla H\|_2^2\,dt
 \le M\tb^{2-2\alpha}d_*^{-1}\le C M\tb^{-\alpha}.
\]
This estimate is valid for arbitrary $T$. The inverse damping scale
is retained explicitly; no large-density factor is used.

Expanding the heat operator gives
\[
 \kappa\theta^\beta\Delta\theta
 =\rho\dot\theta+R\rho\theta d-\theta^\alpha Q(u)
                 -\kappa\beta\theta^{\beta-1}|\nabla\theta|^2.
\]
The \(L^2\) Laplace estimate consequently implies
\begin{align}\label{eq:fdst-heat-elliptic}
 \|\nabla^2\theta\|_2
 &\le C_*\left[
   \tb^{-\beta}\|\sqrt\rho\,\dot\theta\|_2+\tb^{1-\beta}\|\nabla u\|_2
    +\tb^{\alpha-\beta}\|\nabla u\|_4^2
    +\tb^{-1}\|\nabla\theta\|_4^2\right]\notag\\
 &\le C_*\left[
   \tb^{-\beta}\|\sqrt\rho\,\dot\theta\|_2+\tb^{1-\beta}\|\nabla u\|_2
    +\tb^{\alpha-\beta}\|\nabla u\|_2\|\nabla^2u\|_2+\tb^{-1}\|\nabla\theta\|_2\|\nabla^2\theta\|_2\right].
\end{align}
For \(\tb\ge2C_*\sqrt M\), the last term is absorbed.  Squaring
and integrating, and using \eqref{eq:fdst-time-inputs}, gives
\begin{equation}\label{eq:fdst-Itheta}
 \left(\int_0^T\|\nabla^2\theta\|_2^2\,dt\right)\le C_M\tb^{-\beta}
                   +C_M\tb^{2\alpha-2\beta}\left(\int_0^T\|\nabla^2u\|_2^2\,dt\right) .
\end{equation}
In obtaining this inequality the pressure contribution is bounded by
\(C_*\tb^{2-2\beta-\alpha}\le C_*\tb^{-\beta}\).
Substitution of \eqref{eq:fdst-Iu} into \eqref{eq:fdst-Itheta} yields
\[
 \left(\int_0^T\|\nabla^2\theta\|_2^2\,dt\right)\le C_M\bigl(\tb^{-\beta}
                       +\tb^{\alpha-2\beta}\bigr)
              +C_M\tb^{2\alpha-2\beta-4}\left(\int_0^T\|\nabla^2\theta\|_2^2\,dt\right) .
\]
Since \(\beta>\alpha\), the second power in the first bracket is no
larger than \(\tb^{-\beta}\), and the coefficient of \(\left(\int_0^T\|\nabla^2\theta\|_2^2\,dt\right)\)
can be made at most \(1/2\).  This proves both estimates in
\eqref{eq:fdst-hessian-integrals}.  Notice that no estimate for
\(\nabla\dot u\), \(\nabla\dot\theta\), or a pointwise-in-time Hessian
has been used.

\paragraph{The momentum identity}
Set \(\bar P_c=R\rhob\tb+\hb^2/2\) and
\begin{equation}\label{eq:fdst-Psi}
 \Psi(t)=\frac12\int\theta^\alpha Q(u)\,dx
                -\int(P_c-\bar P_c)d\,dx.
\end{equation}
For an integrable scalar $f$, the transport integration rule and the
gradient commutator read
\begin{equation}\label{eq:fdst-transport-rules}
 \begin{aligned}
 \int \Dt f\,dx&=\frac d{dt}\int f\,dx-\int df\,dx,\quad
 \partial_j\dot u^i
   =\Dt\partial_ju^i+(\partial_ju^k)(\partial_ku^i),\\
 \Dt\nabla u&=\nabla\dot u-(\nabla u)^2,\qquad
 \Dt Q(u)=2S_0(u):\big(\nabla\dot u-(\nabla u)^2\big).
 \end{aligned}
\end{equation}
Repeated indices range over $\{1,2\}$ and are summed.  Testing
momentum by $\dot u$ and integrating by parts gives
\begin{equation}\label{eq:fdst-momentum-test-detail}
 \|\sqrt\rho\,\dot u\|_2^2
   =-\int\theta^\alpha S_0(u):\nabla\dot u\,dx
       +\int(P_c-\bar P_c)\dv\dot u\,dx.
\end{equation}
The stress term is
\begin{align}\label{eq:fdst-stress-work-detail}
 -\int\theta^\alpha S_0(u):\nabla\dot u\,dx
 &=-\frac12\int\theta^\alpha \Dt Q(u)\,dx
       -\int\theta^\alpha S_0(u):(\nabla u)^2\,dx\notag\\
 &=-\frac12\frac d{dt}\int\theta^\alpha Q(u)\,dx
       +\frac12\int
          (\alpha\theta^{\alpha-1}\dot\theta+\theta^\alpha d)Q(u)\,dx
        -\int\theta^\alpha S_0(u):(\nabla u)^2\,dx.
\end{align}
For the pressure term, the two transport equations yield
\begin{equation}\label{eq:fdst-pressure-transport-detail}
 \begin{aligned}
 \Dt P_c=R\theta(-\rho d)+R\rho\dot\theta+H(-Hd)
       =R\rho\dot\theta-(R\rho\theta+H^2)d,\\
 \Dt(P_c-\bar P_c)+(P_c-\bar P_c)d
       =R\rho\dot\theta-\left(\tfrac12H^2+\bar P_c\right)d,\quad
 \dv\dot u=\Dt d+\operatorname{tr}((\nabla u)^2).
 \end{aligned}
\end{equation}
Consequently,
\begin{align}\label{eq:fdst-pressure-work-detail}
 \int(P_c-\bar P_c)\dv\dot u\,dx
 &=\frac d{dt}\int(P_c-\bar P_c)d\,dx-\int\big[\Dt(P_c-\bar P_c)+(P_c-\bar P_c)d\big]d\,dx
       +\int(P_c-\bar P_c)\operatorname{tr}((\nabla u)^2)\,dx
       \notag\\
 &=\frac d{dt}\int(P_c-\bar P_c)d\,dx
       -R\int\rho\dot\theta d\,dx
       +\frac12\int H^2d^2\,dx\notag\\
 &\quad+\int P_c\operatorname{tr}((\nabla u)^2)\,dx
       +\bar P_c\int\big[d^2-\operatorname{tr}((\nabla u)^2)\big]dx.
\end{align}
The last integral vanishes, since
\begin{equation}\label{eq:fdst-background-cancellation}
 \begin{aligned}
 \int\operatorname{tr}((\nabla u)^2)\,dx
 &=\int\partial_ju^i\,\partial_iu^j\,dx
   =-\int u^i\partial_i d\,dx
   =\int d^2\,dx.
 \end{aligned}
\end{equation}
Substituting \eqref{eq:fdst-stress-work-detail} and
\eqref{eq:fdst-pressure-work-detail} into
\eqref{eq:fdst-momentum-test-detail}, and using
\eqref{eq:fdst-Psi}, gives
\begin{align}\label{eq:fdst-momentum-identity}
 \Psi'(t)+\|\sqrt\rho\,\dot u\|_2^2={}&
   \frac12\int\bigl(\alpha\theta^{\alpha-1}\dot\theta
                               +\theta^\alpha d\bigr)Q(u)\,dx
       -\int\theta^\alpha S_0(u):(\nabla u)^2\,dx\notag\\
 &-R\int\rho\dot\theta\,d\,dx
       +\frac12\int H^2d^2\,dx
       +\int P_c\,\operatorname{tr}((\nabla u)^2)\,dx .
\end{align}
The last two integrals in \eqref{eq:fdst-momentum-identity} are finite:
they involve bounded coefficients times quadratic velocity gradients.

By \eqref{eq:fdst-basic} and the pointwise bands,
\[
 \|P_c-\bar P_c\|_2
 \le C_*\bigl(\tb\|h\|_2+\|q\|_2\bigr).
\]
For every fixed \(\varepsilon>0\), Young's inequality gives
\begin{equation}\label{eq:fdst-pressure-cross}
 \left|\int(P_c-\bar P_c)d\,dx\right|
 \le\varepsilon\tb^\alpha \|\nabla u\|_2^2+
        C_{*,\varepsilon}\bigl(\tb^{2-\alpha}
                                      +\tb^{1-\alpha}\bigr).
\end{equation}
Together with \eqref{eq:fixed-coercivity}, this proves, after choosing
\(\varepsilon\) sufficiently small,
\begin{equation}\label{eq:fdst-Psi-coercive}
 \Psi(t)\ge c_*\tb^\alpha \|\nabla u\|_2^2-C_*\tb^{2-\alpha},
 \qquad
 |\Psi(0)|\le C_*\tb^\alpha .
\end{equation}

The right-hand side of \eqref{eq:fdst-momentum-identity} is bounded
in absolute value by
\begin{equation}\label{eq:fdst-momentum-remainders}
 C_*\left[
   \tb^{\alpha-1}\|\sqrt\rho\,\dot\theta\|_2\|\nabla u\|_2\|\nabla^2u\|_2
   +\tb^\alpha \|\nabla u\|_2^2\|\nabla^2u\|_2
   +\|\sqrt\rho\,\dot\theta\|_2\|\nabla u\|_2+\tb^2\|\nabla u\|_2^2\right].
\end{equation}
Indeed,
\[
 \begin{gathered}
 \int|\dot\theta||\nabla u|^2
 \le C_*\|\sqrt\rho\,\dot\theta\|_2
                  \|\nabla u\|_2\|\nabla^2u\|_2,\quad
 \|\nabla u\|_3^3\le C\|\nabla u\|_2^2\|\nabla^2u\|_2 .
 \end{gathered}
\]
The gas material term is bounded by \(C_*\|\sqrt\rho\,\dot\theta\|_2\|\nabla u\|_2\).
Finally, \(H^2+|P_c|\le C_*\tb^2\), so the full magnetic
background is retained in the last term of
\eqref{eq:fdst-momentum-remainders}.

Each time integral is controlled separately:
\begin{align} \label{eq:fdst-mom-term1}
 \tb^{\alpha-1}\int_0^T
       \|\sqrt\rho\,\dot\theta\|_2\|\nabla u\|_2
                           \|\nabla^2u\|_2\,dt
&\le \tb^{\alpha-1}\sup_{t\le T}\|\nabla u\|_2
       \left(\int_0^T\|\sqrt\rho\,\dot\theta\|_2^2\,dt\right)^{1/2}\left(\int_0^T\|\nabla^2u\|_2^2\,dt\right)^{1/2}
       \notag\\
 &\le C_M\tb^{(\alpha+\beta)/2-1}, 
       \end{align}
       \begin{align}\label{eq:fdst-mom-term2}
 \tb^\alpha\int_0^T\|\nabla u\|_2^2\|\nabla^2u\|_2\,dt
 \le\tb^\alpha\sup_{t\le T}\|\nabla u\|_2
       \left(\int_0^T\|\nabla u\|_2^2\,dt\right)^{1/2}\left(\int_0^T\|\nabla^2u\|_2^2\,dt\right)^{1/2}
   \le C_M,
              \end{align}
       \begin{align}\label{eq:fdst-mom-term3}
 \int_0^T\|\sqrt\rho\,\dot\theta\|_2\|\nabla u\|_2\,dt
 \le\left(\int_0^T\|\sqrt\rho\,\dot\theta\|_2^2\,dt\right)^{1/2}
       \left(\int_0^T\|\nabla u\|_2^2\,dt\right)^{1/2}
       \le C_M\tb^{(\beta-\alpha)/2},   
       \end{align}
       \begin{align}
 &\tb^2\int_0^T\|\nabla u\|_2^2\,dt
       \le C_*\tb^{2-\alpha}.
       \label{eq:fdst-mom-term4}
\end{align}
Integrate \eqref{eq:fdst-momentum-identity} up to any
\(\tau\in[0,T]\), use \eqref{eq:fdst-Psi-coercive}, and then take
the supremum over \(\tau\).  The same integrated identity at
\(\tau=T\) controls the full integral of \(\|\sqrt\rho\,\dot u\|_2^2\).
Equations \eqref{eq:fdst-mom-term1}--\eqref{eq:fdst-mom-term4}
prove \eqref{eq:fdst-A1-improvement}.

\paragraph{The temperature identity}
The scalar gradient commutator is
\begin{equation}\label{eq:fdst-thermal-gradient-detail}
 \partial_j\dot\theta=\Dt\partial_j\theta
                         +(\partial_ju^k)\partial_k\theta,
 \qquad
 \nabla\dot\theta=\Dt\nabla\theta+(\nabla u)^{\top}\nabla\theta.
\end{equation}
Testing the heat equation by $\dot\theta$ gives
\begin{equation}\label{eq:fdst-thermal-test-detail}
 \|\sqrt\rho\,\dot\theta\|_2^2
 =\int\theta^\alpha Q(u)\dot\theta\,dx
       -R\int\rho\theta d\dot\theta\,dx
       -\kappa\int\theta^\beta\nabla\theta\cdot\nabla\dot\theta\,dx.
\end{equation}
By \eqref{eq:fdst-transport-rules} and
\eqref{eq:fdst-thermal-gradient-detail}, the last term equals
\begin{align}\label{eq:fdst-thermal-diffusion-detail}
 -\kappa\int\theta^\beta\nabla\theta\cdot\nabla\dot\theta\,dx
 &=-\frac\kappa2\int\theta^\beta \Dt|\nabla\theta|^2\,dx
       -\kappa\int\theta^\beta
                (\nabla\theta\otimes\nabla\theta):\nabla u\,dx
       \notag\\
 &=-\frac\kappa2\int \Dt(\theta^\beta|\nabla\theta|^2)\,dx
       +\frac{\kappa\beta}{2}
                 \int\theta^{\beta-1}\dot\theta|\nabla\theta|^2\,dx
 -\kappa\int\theta^\beta
                (\nabla\theta\otimes\nabla\theta):\nabla u\,dx
       \notag\\
 &=-\frac\kappa2\frac d{dt}\int\theta^\beta|\nabla\theta|^2\,dx
       +\frac\kappa2\int\theta^\beta d|\nabla\theta|^2\,dx
       \notag\\
 &\quad+\frac{\kappa\beta}{2}
                 \int\theta^{\beta-1}\dot\theta|\nabla\theta|^2\,dx
       -\kappa\int\theta^\beta
                (\nabla\theta\otimes\nabla\theta):\nabla u\,dx.
\end{align}
Substitution in \eqref{eq:fdst-thermal-test-detail} yields
\begin{align}\label{eq:fdst-temperature-identity}
 \frac\kappa2\frac d{dt}\int\theta^\beta|\nabla\theta|^2\,dx+\|\sqrt\rho\,\dot\theta\|_2^2
 ={}&\int\theta^\alpha Q(u)\dot\theta\,dx
       -R\int\rho\theta d\dot\theta\,dx+\frac{\kappa\beta}{2}
          \int\theta^{\beta-1}\dot\theta|\nabla\theta|^2\,dx\notag\\
 &+\frac\kappa2\int\theta^\beta d|\nabla\theta|^2\,dx -\kappa\int\theta^\beta
          (\nabla\theta\otimes\nabla\theta):\nabla u\,dx .
\end{align}
All convection terms have been included through the transport
integration rule in \eqref{eq:fdst-thermal-diffusion-detail}.

The absolute value of the right-hand side is at most
\[
 \begin{aligned}
 C_*\big[&
   \tb^\alpha\|\sqrt\rho\,\dot\theta\|_2
                      \|\nabla u\|_2\|\nabla^2u\|_2
       +\tb\|\sqrt\rho\,\dot\theta\|_2\|\nabla u\|_2
 +\tb^{\beta-1}\|\sqrt\rho\,\dot\theta\|_2
                      \|\nabla\theta\|_2\|\nabla^2\theta\|_2
       +\tb^\beta\|\nabla u\|_2
                      \|\nabla\theta\|_2\|\nabla^2\theta\|_2\big].
 \end{aligned}
\]
For the last term we used
\(\int|\nabla u||\nabla\theta|^2
\le \|\nabla u\|_2\|\nabla\theta\|_4^2\le C\|\nabla u\|_2\|\nabla\theta\|_2\|\nabla^2\theta\|_2\).
The four corresponding estimates are
\begin{align}\label{eq:fdst-heat-term1}
 &\tb^\alpha\int_0^T\|\sqrt\rho\,\dot\theta\|_2 \|\nabla u\|_2\|\nabla^2u\|_2\,dt \le C_M\tb^{(\alpha+\beta)/2},
\end{align}
\begin{align}\label{eq:fdst-heat-term2}
 &\tb\int_0^T\|\sqrt\rho\,\dot\theta\|_2\|\nabla u\|_2\,dt \le C_M\tb^{1+(\beta-\alpha)/2},
\end{align}
\begingroup
\thinmuskip=1mu\medmuskip=2mu\thickmuskip=2mu
\begin{align}\label{eq:fdst-heat-term3}
 &\tb^{\beta-1}\int_0^T\|\sqrt\rho\,\dot\theta\|_2 \|\nabla\theta\|_2\|\nabla^2\theta\|_2\,dt \le\tb^{\beta-1}\sup_{t\le T}\|\nabla\theta\|_2 \Bigl(\int_0^T\|\sqrt\rho\,\dot\theta\|_2^2\,dt\Bigr)^{1/2}\Bigl(\int_0^T\|\nabla^2\theta\|_2^2\,dt\Bigr)^{1/2} \le C_M\tb^{\beta-1},
\end{align}
\endgroup
\begingroup
\thinmuskip=1mu\medmuskip=2mu\thickmuskip=2mu
\begin{align}\label{eq:fdst-heat-term4}
 &\tb^\beta\int_0^T\|\nabla u\|_2 \|\nabla\theta\|_2\|\nabla^2\theta\|_2\,dt \le\tb^\beta\sup_{t\le T}\|\nabla\theta\|_2 \Bigl(\int_0^T\|\nabla u\|_2^2\,dt\Bigr)^{1/2}\Bigl(\int_0^T\|\nabla^2\theta\|_2^2\,dt\Bigr)^{1/2} \le C_M\tb^{(\beta-\alpha)/2}.
\end{align}
\endgroup
The left-hand gradient energy is comparable to
\(\tb^\beta \|\nabla\theta\|_2^2\), and its initial value is at most
\(C_*\tb^\beta\).  Integration up to \(\tau\le T\), followed
by the same supremum and integral argument as above, proves
\eqref{eq:fdst-A2-improvement}.

\paragraph{Choice of the threshold}
Dividing the two remainder brackets by their respective leading
scales gives
\begin{align*}
 \frac{\mathcal A_1(T)}{\tb^\alpha}
 &\le C_*+C_M\left[
  \tb^{(\beta-\alpha)/2-1}
  +\tb^{(\beta-3\alpha)/2}
  +\tb^{-\alpha}+\tb^{2-2\alpha}\right],\\
 \frac{\mathcal A_2(T)}{\tb^\beta}
 &\le C_*+C_M\left[
  \tb^{(\alpha-\beta)/2}
  +\tb^{1-(\alpha+\beta)/2}
  +\tb^{-1}+\tb^{-(\alpha+\beta)/2}\right].
\end{align*}
Every displayed exponent is strictly negative under
\eqref{eq:fdst-parameters}.  First fix constants \(K_1,K_2\)
larger than the corresponding data-dependent \(C_*\), and then
choose \(\tb_*(M)\) so that the remainder sums fit within these
margins.  This proves \eqref{eq:fdst-final-improvement}.

The argument improves the first-order quantities on the entire interval
$[0,T]$. It uses, but does not yet improve, the magnetic damping bound.
The next subsection controls the material derivatives, and
Section~\ref{sec:fdgc} subsequently improves the magnetic bound.
\end{proof}

\subsection{Global material-derivative and Hessian estimates}
\label{subsec:fd-initial-material}

The following estimate improves a temporary bound for the spatial
Hessians together with the material-derivative energies.  The temporary
Hessian bound is an explicit bootstrap hypothesis, not a conclusion
assumed without proof.  No bootstrap bound for either material
derivative is imposed.

\begin{lemma}[Global material energies and Hessian improvement]
\label{lem:fd-initial-material}
Retain the fixed positive background density, the physical parameters,
the alignment, the pointwise bands and the initial data of
Lemma~\ref{lem:fd-short-time}.  In particular,
\[
 2<\alpha<\beta<\alpha+2,\qquad T>0,\qquad
 c_H\tb\le\hb\le C_H\tb .
\]
Suppose that the first-order estimates have already been improved to
\begin{equation}\label{eq:fdim-first-order}
 \mathcal A_1(T)\le K_1\tb^\alpha,\qquad
 \mathcal A_2(T)\le K_2\tb^\beta,
\end{equation}
where \(K_1,K_2\) depend only on the initial-data bound and the fixed
parameters.  Let \(M\ge1\) be fixed, and assume the temporary bounds
\begin{equation}\label{eq:fdim-bootstrap}
 \sup_{0\le t\le T}\|\nabla H(t)\|_2^2
        +d_*\int_0^T\|\nabla H\|_2^2\,dt\le M,\qquad
 \sup_{0\le t\le T}
       \big(\|\nabla^2u(t)\|_2^2+\|\nabla^2\theta(t)\|_2^2\big)\le M .
\end{equation}
For sufficiently regular solutions there are constants
\(K_{\mathrm{mat}},K_{\mathrm{Hess}}>0\), independent of \(M,\tb,\hb,T\),
and a threshold \(L_{\mathrm{mat}}(M)\), such that
\(\tb\ge L_{\mathrm{mat}}(M)\) implies
\begin{align}\label{eq:fdim-material-result}
 &\sup_{0\le t\le T}
   \left[
     \tb^{-2\alpha}\|\sqrt\rho\,\dot u(t)\|_2^2
       +\tb^{-2\beta}\|\sqrt\rho\,\dot\theta(t)\|_2^2
   \right]+
   \tb^{-\alpha}\int_0^T\|\nabla\dot u\|_2^2\,dt
       +\tb^{-\beta}\int_0^T\|\nabla\dot\theta\|_2^2\,dt
 \le K_{\mathrm{mat}},
\end{align}
and
\begin{equation}\label{eq:fdim-hessian-result}
 \sup_{0\le t\le T}
       \big(\|\nabla^2u(t)\|_2^2+\|\nabla^2\theta(t)\|_2^2\big)
 \le K_{\mathrm{Hess}}.
\end{equation}
The constants may depend on
\(\rhob,\rhob^{-1},C_0,\mu,\lambda,\kappa,R,\alpha,\beta,c_H,C_H,K_1,K_2\).
Thus, if \(M\) was chosen larger than \(2K_{\mathrm{Hess}}\), the
second bootstrap bound in \eqref{eq:fdim-bootstrap} is strictly
improved.  The magnetic-gradient bound remains to be improved
separately. All estimates hold on the same arbitrary finite interval,
with constants independent of its length.
\end{lemma}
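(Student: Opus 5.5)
We propose the standard Hoff-type material-derivative estimates. Apply $\Dt$ to the momentum equation in the form $\rho\dot u+\nabla(P_c-\bar P_c)=\dv(\theta^\alpha S_0(u))$, using the commutator rules \eqref{eq:fdst-transport-rules}. Then test with $\dot u$. Similarly, apply $\Dt$ to the heat equation and test with $\dot\theta$, using \eqref{eq:fdst-thermal-gradient-detail}.

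For the velocity, the leading terms are $\frac12\frac{d}{dt}\|\sqrt\rho\dot u\|_2^2$ and a dissipation $\int\theta^\alpha Q(\dot u)\ge c\tb^\alpha\|\nabla\dot u\|_2^2$ by \eqref{eq:fixed-coercivity}. The remainders are of three kinds:
\begin{itemize}
\item stress commutators bounded by $\tb^{\alpha}\int|\nabla u|^2|\nabla\dot u|$;
\item the coefficient derivative $\alpha\theta^{\alpha-1}\dot\theta S_0(u):\nabla\dot u$;
\item pressure terms $\int\Dt P_c\,\dv\dot u$, where $\Dt P_c=R\rho\dot\theta-(R\rho\theta+H^2)d$ by \eqref{eq:fdst-pressure-transport-detail}, together with $\int P_c\,\partial_ku\cdot\nabla\dot u$-type commutators.
\end{itemize}
The last group is the largest piece, of size $\tb^2\|\nabla u\|_2\|\nabla\dot u\|_2$ plus $\|\dot\theta\|_2\|\nabla\dot u\|_2$. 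After Young with $\varepsilon\tb^\alpha\|\nabla\dot u\|_2^2$, it leaves $\tb^{4-\alpha}\|\nabla u\|_2^2+\tb^{-\alpha}\|\dot\theta\|_2^2$. Their time integrals are $C\tb^{4-2\alpha}$ and $K_2\tb^{\beta-\alpha}$, using \eqref{eq:fdst-basic} and \eqref{eq:fdim-first-order}. The second is $\le C\tb^{2\alpha}$ because $\beta<\alpha+2<3\alpha$.

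For the temperature, the dissipation is $\kappa\int\theta^\beta|\nabla\dot\theta|^2$. The sources are $\Dt(\theta^\alpha Q(u))$ and $\Dt(R\rho\theta d)$, tested by $\dot\theta$, plus commutators of $\dv(\theta^\beta\nabla\theta)$ with $\Dt$. The term $\int\theta^\alpha S_0:\nabla\dot u\,\dot\theta$ couples the two estimates. It is bounded by $\tb^\alpha\|\nabla u\|_4\|\nabla\dot u\|_2\|\dot\theta\|_4$, hence the two estimates must be weighted: sum $\tb^{-2\alpha}\times$(velocity) and $\tb^{-2\beta}\times$(temperature). Each cubic or quartic remainder is then handled with the 2D inequalities \eqref{eq:fixed-GN}, for instance $\|\dot\theta\|_4^2\le C\|\dot\theta\|_2\|\nabla\dot\theta\|_2$ and $\|\nabla u\|_4\le C\|\nabla u\|_2^{1/2}\|\nabla^2u\|_2^{1/2}$. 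These produce factors like $\sup\|\nabla^2u\|_2\le\sqrt M$, which always appear multiplied by a negative power of $\tb$ (e.g. $\tb^{-1}$ from $\theta^{-1}\nabla\theta$ factors). This is where the threshold $L_{\mathrm{mat}}(M)$ enters: $C_M\tb^{-\gamma}$ terms are absorbed into the dissipation. Gronwall-type factors are replaced by integrable ones: $\int_0^T(\|\nabla u\|_\infty+\|\nabla\theta\|_4^2\ldots)\,dt$ is controlled by \eqref{eq:fdst-hessian-integrals} and \eqref{eq:fdst-basic}. For example, $\int_0^T\|\nabla u\|_4^4\le\sup\|\nabla u\|_2^2\int\|\nabla^2 u\|_2^2\le C\tb^{-\alpha}$, so the exponential factor is $1+o(1)$. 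The initial values are $\tb^{-2\alpha}\|g_1\|_2^2+\tb^{-2\beta}\|g_2\|_2^2\le C$ by Proposition~\ref{prop:fixed-initial}. This yields \eqref{eq:fdim-material-result} with $K_{\mathrm{mat}}$ independent of $M$, since the $M$-dependent contributions carry negative powers of $\tb$.

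For \eqref{eq:fdim-hessian-result}, use the pointwise-in-time elliptic estimates \eqref{eq:fdst-lame} and \eqref{eq:fdst-heat-elliptic}. They give $\|\nabla^2u\|_2\le C[\tb^{-\alpha}\|\sqrt\rho\dot u\|_2+\tb^{1-\alpha}\|\nabla H\|_2+\dots]$, and the first term is bounded by $\sqrt{K_{\mathrm{mat}}}$. The magnetic term is $\tb^{1-\alpha}\sqrt M$, which is $\le1$ for large $\tb$. The quadratic terms were already absorbed as in \eqref{eq:fdst-lame}, using $\sup\|\nabla u\|_2^2,\sup\|\nabla\theta\|_2^2\le K\tb^{-\alpha},K\tb^{-\beta}$. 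The same applies to $\nabla^2\theta$: $\tb^{\alpha-\beta}\|\nabla u\|_2\|\nabla^2u\|_2$ is small. This gives $K_{\mathrm{Hess}}$ depending on $K_{\mathrm{mat}}$ only.

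The main obstacle is the velocity–temperature coupling together with the magnetic pressure. The term $\int H^2 d\,\dv\dot u$ has coefficient $\tb^2$, which competes with the dissipation $\tb^\alpha$; it survives only because $\alpha>2$ and because $\int\|\nabla u\|_2^2\le C\tb^{-\alpha}$. Likewise the exponent bookkeeping for $\tb^{-2\beta}\int\theta^{\alpha}|\nabla u||\nabla\dot u||\dot\theta|$ requires $\beta<\alpha+2$. I would first check all exponents in the weighted sum before fixing the weights.
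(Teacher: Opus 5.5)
Your architecture is the paper's. You commute both equations with the conservative material derivative, test by $\dot u$ and $\dot\theta$, and add $\tb^{-2\alpha}$ and $\tb^{-2\beta}$ times the two identities. You take the initial values from Proposition~\ref{prop:fixed-initial} and recover the Hessians from the pointwise elliptic estimates.

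The gap is in how you close. You propose Gronwall with factors such as $\int_0^T\|\nabla u\|_\infty\,dt$, and assert these are controlled by \eqref{eq:fdst-hessian-integrals} and \eqref{eq:fdst-basic}. They are not, for three reasons:
\begin{enumerate}
\item In two dimensions, $\|\nabla u\|_2$ and $\|\nabla^2u\|_2$ do not control $\|\nabla u\|_\infty$.
\item An $L^4$ Hessian bound becomes available only after the $L^4$ magnetic-gradient bootstrap of Section~\ref{sec:fdgc}, which is not a hypothesis here.
\item A first-power time integral over an arbitrary $[0,T]$ is not bounded uniformly in $T$ by $L^2_t$ information.
\end{enumerate}

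The idea you are missing is that no Gronwall argument is needed. The bounds \eqref{eq:fdim-first-order} already give $\int_0^T\|\dot u\|_2^2\,dt\le C\tb^\alpha$ and $\int_0^T\|\dot\theta\|_2^2\,dt\le C\tb^\beta$, with constants independent of $M$. So apply $L^2$--$L^4$--$L^4$ H\"older, then $\|f\|_4^2\le C\|f\|_2\|\nabla f\|_2$, and absorb into $\varepsilon\tb^{-\alpha}\|\nabla\dot u\|_2^2+\varepsilon\tb^{-\beta}\|\nabla\dot\theta\|_2^2$. Every remainder is then a product of supremum factors with these time integrals, or with \eqref{eq:fdim-hessian-integrals}, and can be integrated directly. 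The supremum factors are $\|\nabla u\|_2^2\le K_1$, $\|\nabla\theta\|_2^2\le K_2$ and the temporary bound $\|\nabla^2u\|_2^2\le M$. Terms such as $\int\rho|\dot\theta|^2d$ and $\int\theta^{\alpha-1}|\dot\theta|^2Q(u)$ must be treated this way, not through $\|\nabla u\|_\infty$.

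This choice matters for the admissible range. The binding term is the coefficient derivative $\alpha\theta^{\alpha-1}\dot\theta\,S_0(u):\nabla\dot u$ in the velocity identity. It is not the mixed temperature term $\theta^\alpha\dot\theta\,S_0(u):\nabla\dot u$; that one leaves $C_M\tb^{6\alpha-6\beta}$ and needs only $\beta>\alpha$.
\begin{itemize}
\item After the two absorptions, the velocity term leaves $\tb^{\beta-2\alpha-4}\|\dot\theta\|_2^2\|\nabla u\|_2^2\|\nabla^2u\|_2^2$.
\item Integrated directly, using $\int_0^T\|\dot\theta\|_2^2\,dt\le C\tb^\beta$ and the temporary Hessian bound, this is $C_M\tb^{2\beta-2\alpha-4}$. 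This is exactly where $\beta<\alpha+2$ enters.
\item Used instead as a Gronwall coefficient against $\tb^{-2\beta}\|\dot\theta\|_2^2$ (equivalently, using a supremum of $\|\dot\theta\|_2$), it costs $C_M\tb^{3\beta-3\alpha-4}$. That is small only when $\beta<\alpha+4/3$.
\end{itemize}

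A smaller point: $\mathcal A_1$ and $\mathcal A_2$ give $\sup_t\|\nabla u\|_2^2\le K_1$ and $\sup_t\|\nabla\theta\|_2^2\le K_2$, not $K\tb^{-\alpha}$ and $K\tb^{-\beta}$. Your Hessian closure still works, but the smallness comes from elsewhere. It comes from the explicit coupling coefficients $\tb^{-2}$ and $\tb^{\alpha-\beta}$ in \eqref{eq:fdim-hessian-u}--\eqref{eq:fdim-hessian-theta}, together with $\tb^{1-\alpha}\sqrt M\le1$.
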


\begin{proof}
Throughout the proof \(C_*\) has only the fixed dependencies specified
in the lemma, and \(C_M\) may additionally depend on \(M\).
Every calculation is first made for sufficiently regular solutions;
the estimates contain no uncontrolled higher norm of such a solution.

\paragraph{Previously established inputs and initial material energies}
The first-order estimates, the pointwise density bounds, and
\eqref{eq:fdst-basic} imply
\begin{equation}\label{eq:fdim-inputs}
 \begin{gathered}
  \sup_{t\le T}\big(\|\nabla u\|_2^2+\|\nabla\theta\|_2^2\big)\le C_*,
  \qquad
  \int_0^T\|\dot u\|_2^2\,dt\le C_*\tb^\alpha,\\
  \int_0^T\|\dot\theta\|_2^2\,dt\le C_*\tb^\beta,\qquad
  \int_0^T\|\nabla u\|_2^2\,dt\le C_*\tb^{-\alpha}.
 \end{gathered}
\end{equation}
The coupled elliptic argument in
\eqref{eq:fdst-lame}--\eqref{eq:fdst-Itheta}, with
\eqref{eq:fdim-first-order} and the magnetic-gradient assumption,
also gives
\begin{equation}\label{eq:fdim-hessian-integrals}
 \int_0^T\|\nabla^2u\|_2^2\,dt\le C_M\tb^{-\alpha},
 \qquad
 \int_0^T\|\nabla^2\theta\|_2^2\,dt\le C_M\tb^{-\beta}.
\end{equation}
These estimates precede the material-energy calculation.  In
particular, the bound for
\(\int_0^T\|\dot\theta\|_2^2\,dt\) comes from
\(\mathcal A_2(T)\), not from the material-energy conclusion being
proved.

The equations at \(t=0\), the initial \(H^2\) bounds, and
\(\nabla P_c=(R\theta\rhob/\hb+H)\nabla H+R\rho\nabla\theta\)
give
\begin{align}\label{eq:fdim-initial-u}
 \|\sqrt{\rho_0}\,\dot u_0\|_2
 &\le C_*\big[
   \tb^\alpha\|\nabla^2u_0\|_2
  +\tb^{\alpha-1}\|\nabla\theta_0\|_4\|\nabla u_0\|_4
  +\tb\|\nabla H_0\|_2+\|\nabla\theta_0\|_2\big]
 \le C_*\tb^\alpha ,
\end{align}
and
\begin{align}\label{eq:fdim-initial-theta}
 \|\sqrt{\rho_0}\,\dot\theta_0\|_2
 &\le C_*\big[
   \tb^\beta\|\nabla^2\theta_0\|_2
  +\tb^{\beta-1}\|\nabla\theta_0\|_4^2  +\tb^\alpha\|\nabla u_0\|_4^2
  +\tb\|\nabla u_0\|_2\big]
 \le C_*\tb^\beta .
\end{align}
Only first derivatives of \(H_0\) enter these estimates.

\paragraph{The material momentum identity}
For a vector field \(a\), the conservative material operator satisfies
\begin{equation}\label{eq:fdim-div-commutator}
 (\partial_t+\operatorname{div}(u\,\cdot))\operatorname{div}a
 =\operatorname{div}\big(\Dt a+d\,a-(\nabla u)a\big).
\end{equation}
The corresponding identity for a tensor is applied to each row.
Indeed, in components,
\begin{equation}\label{eq:fdim-div-commutator-detail}
 \begin{aligned}
 (\Dt+d)\partial_j a_j
 &=\partial_j\Dt a_j-(\partial_ju^k)\partial_ka_j
                                      +d\partial_ja_j
 =\partial_j\big(\Dt a_j+da_j-(\partial_ku^j)a_k\big),\\
 (\Dt+d)\partial_j\tau_{ij}
 &=\partial_j\big(\Dt\tau_{ij}+d\tau_{ij}
                                      -\tau_{ik}\partial_ku^j\big).
 \end{aligned}
\end{equation}
In the second equality, the additional second-derivative terms cancel
as $a_j\partial_jd-a_k\partial_kd=0$.
The continuity equation also gives, for a scalar or vector $f$,
\begin{equation}\label{eq:fdim-weighted-material-rule}
 \begin{aligned}
 (\Dt+d)(\rho f)&=\rho \Dt f+(\Dt\rho+\rho d)f=\rho \Dt f,\\
 \int\rho \Dt f\cdot f\,dx
 &=\frac12\int\rho \Dt|f|^2\,dx
   =\frac12\frac d{dt}\int\rho|f|^2\,dx.
 \end{aligned}
\end{equation}
By \eqref{eq:fdst-transport-rules},
\begin{equation}\label{eq:fdim-stress-material-detail}
 \begin{aligned}
 \Dt S_0(u)
 &=S_0(\dot u)-2\mu\operatorname{sym}((\nabla u)^2)
                  -\lambda\operatorname{tr}((\nabla u)^2)I,\\
 \Dt(\theta^\alpha S_0(u))
 &=\theta^\alpha S_0(\dot u)
       +\alpha\theta^{\alpha-1}\dot\theta S_0(u)
-\theta^\alpha\big[2\mu\operatorname{sym}((\nabla u)^2)
                  +\lambda\operatorname{tr}((\nabla u)^2)I\big].
 \end{aligned}
\end{equation}
Here $\operatorname{sym}A=(A+A^{\top})/2$.  Applying
$\Dt+d=\partial_t+\dv(u\,\cdot)$ to the momentum equation gives
\begin{align}\label{eq:fdim-momentum-commuted-detail}
 \rho \Dt\dot u
 &=\dv(\theta^\alpha S_0(\dot u))
       +\alpha\dv(\theta^{\alpha-1}\dot\theta S_0(u))+\dv\Big(\theta^\alpha\big[
       -2\mu\operatorname{sym}((\nabla u)^2)
       -\lambda\operatorname{tr}((\nabla u)^2)I\notag\\
 &\quad+dS_0(u)-S_0(u)(\nabla u)^{\top}\big]\Big)
       -(\Dt+d)\nabla(P_c-\bar P_c).
\end{align}
The pressure contribution, after testing by $\dot u$, is
\begin{align}\label{eq:fdim-pressure-material-detail}
 -\int\dot u\cdot(\Dt+d)\nabla(P_c-\bar P_c)\,dx
 &=\int\big[\Dt P_c+(P_c-\bar P_c)d\big]\dv\dot u\,dx
       -\int(P_c-\bar P_c)\partial_k\dot u^j\partial_ju^k\,dx
       \notag\\
 &=R\int\rho\dot\theta\dv\dot u\,dx
       -\frac12\int H^2d\dv\dot u\,dx
       -\int P_c\partial_k\dot u^j\partial_ju^k\,dx\notag\\
 &\quad+\bar P_c\left(\int\partial_k\dot u^j\partial_ju^k\,dx
                                     -\int d\dv\dot u\,dx\right).
\end{align}
The background terms cancel by
\begin{equation}\label{eq:fdim-pressure-background-detail}
 \int\partial_k\dot u^j\partial_ju^k\,dx
   =-\int\dot u^j\partial_jd\,dx
   =\int d\dv\dot u\,dx.
\end{equation}
Finally,
\begin{equation}\label{eq:fdim-stress-dissipation-detail}
 \int\dot u\cdot\dv(\theta^\alpha S_0(\dot u))\,dx
   =-\int\theta^\alpha S_0(\dot u):\nabla\dot u\,dx
   =-\int\theta^\alpha Q(\dot u)\,dx.
\end{equation}
Testing \eqref{eq:fdim-momentum-commuted-detail} by $\dot u$ and
using \eqref{eq:fdim-weighted-material-rule} and
\eqref{eq:fdim-pressure-material-detail}--\eqref{eq:fdim-stress-dissipation-detail},
we obtain
\begin{align}\label{eq:fdim-momentum-identity}
& \frac12\frac d{dt}\int\rho|\dot u|^2\,dx
       +\int\theta^\alpha Q(\dot u)\,dx\notag\\
 &=
  -\alpha\int\theta^{\alpha-1}\dot\theta\,
                    S_0(u):\nabla\dot u\,dx+\int\theta^\alpha\big[
       2\mu\operatorname{sym}((\nabla u)^2)
       +\lambda\operatorname{tr}((\nabla u)^2)I
       -dS_0(u)+S_0(u)(\nabla u)^{\top}
       \big]:\nabla\dot u\,dx\notag\\
 &\quad+R\int\rho\dot\theta\,\operatorname{div}\dot u\,dx
       -\frac12\int H^2d\,\operatorname{div}\dot u\,dx
       -\int P_c\,\partial_k\dot u^j\,\partial_j u^k\,dx .
\end{align}
All integrals in \eqref{eq:fdim-momentum-identity} are therefore
finite without requiring \(H\in L^2\).

The combined coercivity of \(Q(\dot u)\), including the case
\(\lambda<0\), follows from \eqref{eq:fixed-coercivity}.
Since \(H^2+|P_c|\le C_*\tb^2\), Young's inequality gives
\begin{align}\label{eq:fdim-momentum-inequality}
 &\frac d{dt}\|\sqrt\rho\,\dot u\|_2^2
       +c_*\tb^\alpha\|\nabla\dot u\|_2^2\le C_*\big[
     \tb^{-\alpha}\|\dot\theta\|_2^2
    +\tb^{4-\alpha}\|\nabla u\|_2^2
    +\tb^\alpha\|\nabla u\|_4^4
    +\tb^{\alpha-2}\|\dot\theta\,\nabla u\|_2^2\big].
\end{align}
The power \(\tb^{4-\alpha}\) retains the full magnetic background;
it is not replaced by a smaller power.

After multiplication by \(\tb^{-2\alpha}\), the first three
right-hand terms have time integrals bounded by
\begin{equation}\label{eq:fdim-momentum-easy}
 C_M\big(\tb^{\beta-3\alpha}
                 +\tb^{4-4\alpha}+\tb^{-2\alpha}\big).
\end{equation}
For the third term this uses
\[
 \tb^{-\alpha}\int_0^T\|\nabla u\|_4^4\,dt
 \le C\tb^{-\alpha}\sup_{t\le T}\|\nabla u\|_2^2
                    \int_0^T\|\nabla^2u\|_2^2\,dt .
\]
For the remaining product, two-dimensional interpolation gives,
for every \(\varepsilon>0\),
\begin{align}\label{eq:fdim-momentum-product}
 &\tb^{-\alpha-2}\|\dot\theta\,\nabla u\|_2^2\le C_*\tb^{-\alpha-2}
       \|\dot\theta\|_2\|\nabla\dot\theta\|_2
       \|\nabla u\|_2\|\nabla^2u\|_2\le\varepsilon\tb^{-\beta}\|\nabla\dot\theta\|_2^2
    +C_{*,\varepsilon}\tb^{\beta-2\alpha-4}
       \|\dot\theta\|_2^2\|\nabla u\|_2^2\|\nabla^2u\|_2^2 .
\end{align}
The temporary Hessian bound and \eqref{eq:fdim-inputs} imply
\begin{align}\label{eq:fdim-momentum-product-integral}
 \tb^{\beta-2\alpha-4}
    \int_0^T\|\dot\theta\|_2^2
                 \|\nabla u\|_2^2\|\nabla^2u\|_2^2\,dt\le C_M\tb^{2\beta-2\alpha-4}.
\end{align}
This is where the temporary pointwise Hessian bound is useful.
Using a supremum of \(\|\dot\theta\|_2\) instead would impose an
unnecessary stricter restriction on \(\beta-\alpha\).

\paragraph{The material temperature identity}
First, \eqref{eq:fdst-thermal-gradient-detail} gives
\begin{equation}\label{eq:fdim-heat-flux-expansion}
 \begin{aligned}
 \Dt(\theta^\beta\nabla\theta)
 &=\beta\theta^{\beta-1}\dot\theta\nabla\theta
       +\theta^\beta\big(\nabla\dot\theta
                          -(\nabla u)^{\top}\nabla\theta\big),\\
 \Dt(\theta^\beta\nabla\theta)
       +d\theta^\beta\nabla\theta-(\nabla u)\theta^\beta\nabla\theta
 &=\theta^\beta\nabla\dot\theta
       +\beta\theta^{\beta-1}\dot\theta\nabla\theta
 +\theta^\beta[dI-\nabla u-(\nabla u)^{\top}]\nabla\theta.
 \end{aligned}
\end{equation}
Commuting the heat flux by \eqref{eq:fdim-div-commutator} therefore gives
\begin{align}\label{eq:fdim-heat-flux-identity}
 &(\partial_t+\operatorname{div}(u\,\cdot))
              \operatorname{div}(\theta^\beta\nabla\theta)
 =\operatorname{div}(\theta^\beta\nabla\dot\theta)
       +\beta\operatorname{div}
                     (\theta^{\beta-1}\dot\theta\,\nabla\theta)
 +\operatorname{div}\big(
       \theta^\beta[dI-\nabla u-(\nabla u)^{\top}]
                           \nabla\theta\big).
\end{align}
In particular, all terms involving an undifferentiated \(u\)
cancel in this divergence-form commutator.
The pressure and viscous heating terms expand as
\begin{equation}\label{eq:fdim-thermal-pressure-expansion}
 \begin{aligned}
 (\Dt+d)(R\rho\theta d)
 &=R\rho \Dt(\theta d)
   =R\rho\dot\theta d+R\rho\theta \Dt d
 =R\rho\dot\theta d+R\rho\theta\dv\dot u
                 -R\rho\theta\operatorname{tr}((\nabla u)^2),
 \end{aligned}
\end{equation}
\begin{equation}\label{eq:fdim-thermal-heating-expansion}
 \begin{aligned}
 (\Dt+d)(\theta^\alpha Q(u))
 &=\alpha\theta^{\alpha-1}\dot\theta Q(u)
       +\theta^\alpha \Dt Q(u)+d\theta^\alpha Q(u)\\
 &=\alpha\theta^{\alpha-1}\dot\theta Q(u)
       +2\theta^\alpha S_0(u):\nabla\dot u
+\theta^\alpha[dQ(u)-2S_0(u):(\nabla u)^2].
 \end{aligned}
\end{equation}
Using \eqref{eq:fdim-weighted-material-rule}, the differentiated
temperature equation is thus
\begin{align}\label{eq:fdim-thermal-commuted-detail}
 \rho \Dt\dot\theta-\kappa\dv(\theta^\beta\nabla\dot\theta)
 &=-R\rho\dot\theta d-R\rho\theta\dv\dot u
       +R\rho\theta\operatorname{tr}((\nabla u)^2)\notag\\
 &\quad+\theta^\alpha[dQ(u)-2S_0(u):(\nabla u)^2]
       +\alpha\theta^{\alpha-1}\dot\theta Q(u)
       +2\theta^\alpha S_0(u):\nabla\dot u\notag\\
 &\quad+\kappa\beta\dv(\theta^{\beta-1}\dot\theta\nabla\theta)
       +\kappa\dv\big(\theta^\beta
          [dI-\nabla u-(\nabla u)^{\top}]\nabla\theta\big).
\end{align}
Testing by $\dot\theta$, the three flux terms give
\begin{equation}\label{eq:fdim-thermal-flux-test-detail}
 \begin{aligned}
 -\kappa\int\dot\theta\dv(\theta^\beta\nabla\dot\theta)\,dx
   &=\kappa\int\theta^\beta|\nabla\dot\theta|^2\,dx,\\
 \kappa\beta\int\dot\theta\dv(
                    \theta^{\beta-1}\dot\theta\nabla\theta)\,dx
   &=-\kappa\beta\int\theta^{\beta-1}\dot\theta
                       \nabla\theta\cdot\nabla\dot\theta\,dx,\\
 \kappa\int\dot\theta\dv\big(\theta^\beta
             [dI-\nabla u-(\nabla u)^{\top}]\nabla\theta\big)\,dx
   &=-\kappa\int\theta^\beta
             [dI-\nabla u-(\nabla u)^{\top}]
                          \nabla\theta\cdot\nabla\dot\theta\,dx.
 \end{aligned}
\end{equation}
Together with
$\int\rho \Dt\dot\theta\,\dot\theta\,dx
 =\frac12\frac d{dt}\int\rho|\dot\theta|^2\,dx$,
these identities yield
\begin{align}\label{eq:fdim-temperature-identity}
 &\frac12\frac d{dt}\int\rho|\dot\theta|^2\,dx
       +\kappa\int\theta^\beta|\nabla\dot\theta|^2\,dx
 =-R\int\rho|\dot\theta|^2d\,dx
       -R\int\rho\theta\dot\theta\,
                         \operatorname{div}\dot u\,dx
       +R\int\rho\theta\dot\theta\,
                         \operatorname{tr}((\nabla u)^2)\,dx\notag\\
 &\quad+\int\theta^\alpha\dot\theta
          [dQ(u)-2S_0(u):(\nabla u)^2]\,dx
       +\alpha\int\theta^{\alpha-1}|\dot\theta|^2Q(u)\,dx
       +2\int\theta^\alpha\dot\theta\,
                              S_0(u):\nabla\dot u\,dx\notag\\
 &\quad-\kappa\beta\int\theta^{\beta-1}\dot\theta\,
                              \nabla\theta\cdot\nabla\dot\theta\,dx
       -\kappa\int\theta^\beta
       [dI-\nabla u-(\nabla u)^{\top}]
                              \nabla\theta\cdot\nabla\dot\theta\,dx .
\end{align}
We estimate these eight terms after multiplying the identity by
\(\tb^{-2\beta}\).  Factors depending on the fixed density are
included in \(C_*\).

The first term satisfies
\begin{align}\label{eq:fdim-thermal-1}
 &\tb^{-2\beta}\int|\dot\theta|^2|\nabla u|\,dx 
 \le C\tb^{-2\beta}\|\nabla u\|_2
                         \|\dot\theta\|_2\|\nabla\dot\theta\|_2\le\varepsilon\tb^{-\beta}\|\nabla\dot\theta\|_2^2
       +C_\varepsilon\tb^{-3\beta}
                           \|\nabla u\|_2^2\|\dot\theta\|_2^2 .
\end{align}
The time integral of the last term is at most
\(C_{M,\varepsilon}\tb^{-2\beta}\).
The pressure term involving \(\nabla\dot u\) obeys
\begin{align}\label{eq:fdim-thermal-2}
 &\tb^{1-2\beta}\|\dot\theta\|_2\|\nabla\dot u\|_2 \le\varepsilon\tb^{-\alpha}\|\nabla\dot u\|_2^2 +C_\varepsilon\tb^{2+\alpha-4\beta}\|\dot\theta\|_2^2 ,
\end{align}
whose last term has time integral bounded by
\(C_{M,\varepsilon}\tb^{2+\alpha-3\beta}\).

For the remaining pressure term,
\begingroup
\thinmuskip=1mu\medmuskip=2mu\thickmuskip=2mu
\begin{align}\label{eq:fdim-thermal-3}
 &\tb^{1-2\beta}\int_0^T \int|\dot\theta||\nabla u|^2\,dx\,dt \le C\tb^{1-2\beta}\sup_{t\le T}\|\nabla u\|_2 \Bigl(\int_0^T\|\dot\theta\|_2^2\,dt\Bigr)^{1/2} \Bigl(\int_0^T\|\nabla^2u\|_2^2\,dt\Bigr)^{1/2} \le C_M\tb^{1-(\alpha+3\beta)/2}.
\end{align}
\endgroup
The cubic viscous term is estimated using
\(\|\nabla u\|_6^3\le
C\|\nabla u\|_2\|\nabla^2u\|_2^2\):
\begin{align}\label{eq:fdim-thermal-4}
 \tb^{\alpha-2\beta}\int_0^T
                 \int|\dot\theta||\nabla u|^3\,dx\,dt &\le C\tb^{\alpha-2\beta}
       \sup_{t\le T}
          \big(\|\nabla u\|_2\|\nabla^2u\|_2\big)
       \left(\int_0^T\|\dot\theta\|_2^2\,dt\right)^{1/2}
       \left(\int_0^T\|\nabla^2u\|_2^2\,dt\right)^{1/2}\notag\\
 &\le C_M\tb^{(\alpha-3\beta)/2}.
\end{align}
Thus this term requires no pointwise material-derivative estimate.

For the term containing \(|\dot\theta|^2Q(u)\),
\begin{align}\label{eq:fdim-thermal-5}
 &\tb^{\alpha-1-2\beta}
                      \|\dot\theta\,\nabla u\|_2^2 
 \le C\tb^{\alpha-1-2\beta}
       \|\dot\theta\|_2\|\nabla\dot\theta\|_2
       \|\nabla u\|_2\|\nabla^2u\|_2\le\varepsilon\tb^{-\beta}\|\nabla\dot\theta\|_2^2
       +C_\varepsilon\tb^{2\alpha-2-3\beta}
          \|\dot\theta\|_2^2\|\nabla u\|_2^2\|\nabla^2u\|_2^2 .
\end{align}
The integrated remainder is bounded by
\(C_{M,\varepsilon}\tb^{2\alpha-2-2\beta}\).

The mixed viscous term requires absorption into both dissipations:
\begin{align}\label{eq:fdim-thermal-6}
 \tb^{\alpha-2\beta}
       \int|\dot\theta||\nabla u||\nabla\dot u|\,dx &\le C\tb^{\alpha-2\beta}
      \|\dot\theta\|_2^{1/2}\|\nabla\dot\theta\|_2^{1/2}
      \|\nabla u\|_2^{1/2}\|\nabla^2u\|_2^{1/2}
      \|\nabla\dot u\|_2\notag\\
 &\le\varepsilon\tb^{-\alpha}\|\nabla\dot u\|_2^2
      +C_\varepsilon\tb^{3\alpha-4\beta}
          \|\dot\theta\|_2\|\nabla\dot\theta\|_2
          \|\nabla u\|_2\|\nabla^2u\|_2\notag\\
 &\le\varepsilon\tb^{-\alpha}\|\nabla\dot u\|_2^2
      +\varepsilon\tb^{-\beta}\|\nabla\dot\theta\|_2^2 +C_\varepsilon\tb^{6\alpha-7\beta}
          \|\dot\theta\|_2^2\|\nabla u\|_2^2\|\nabla^2u\|_2^2 .
\end{align}
Its integrated remainder is
\(C_{M,\varepsilon}\tb^{6\alpha-6\beta}\).
The successive uses of Young's inequality have constants depending
on the chosen \(\varepsilon\), which is fixed only after all terms
have been collected.

The derivative of the conductivity produces
\begin{align}\label{eq:fdim-thermal-7}
 \tb^{-\beta-1}
       \int|\dot\theta||\nabla\theta||\nabla\dot\theta|\,dx 
& \le C\tb^{-\beta-1}
       \|\dot\theta\|_2^{1/2}\|\nabla\dot\theta\|_2^{3/2}
       \|\nabla\theta\|_2^{1/2}\|\nabla^2\theta\|_2^{1/2}\notag\\
 &\le\varepsilon\tb^{-\beta}\|\nabla\dot\theta\|_2^2
       +C_\varepsilon\tb^{-\beta-4}
          \|\dot\theta\|_2^2
          \|\nabla\theta\|_2^2\|\nabla^2\theta\|_2^2 .
\end{align}
Here Young's inequality uses conjugate exponents \(4/3\) and \(4\).
The time integral of the last term is bounded by
\(C_{M,\varepsilon}\tb^{-4}\).
Finally, the geometric heat-flux commutator satisfies
\begin{align}\label{eq:fdim-thermal-8}
 &\tb^{-\beta} \int|\nabla u||\nabla\theta||\nabla\dot\theta|\,dx \le\varepsilon\tb^{-\beta}\|\nabla\dot\theta\|_2^2 +C_\varepsilon\tb^{-\beta} \|\nabla u\,\nabla\theta\|_2^2 .
\end{align}
By interpolation and \eqref{eq:fdim-hessian-integrals},
\begin{align}\label{eq:fdim-thermal-8-integral}
 \tb^{-\beta}\int_0^T\|\nabla u\,\nabla\theta\|_2^2\,dt &\le C\tb^{-\beta}\sup_{t\le T} \big(\|\nabla u\|_2\|\nabla\theta\|_2\big) \left(\int_0^T\|\nabla^2u\|_2^2\,dt\right)^{1/2} \left(\int_0^T\|\nabla^2\theta\|_2^2\,dt\right)^{1/2}\notag\\
 &\le C_M\tb^{-(\alpha+3\beta)/2}.
\end{align}

\paragraph{The coupled material-energy improvement}
Choose \(\varepsilon>0\) small relative to the fixed positive
coercivity constants.  Add \(\tb^{-2\alpha}\) times
\eqref{eq:fdim-momentum-inequality} to
\(\tb^{-2\beta}\) times the estimate derived from
\eqref{eq:fdim-temperature-identity}.  All derivative terms with
coefficient \(\varepsilon\) are then absorbed into
\[
 c_*\tb^{-\alpha}\|\nabla\dot u\|_2^2
        +c_*\tb^{-\beta}\|\nabla\dot\theta\|_2^2 .
\]
Integrate up to any \(\tau\le T\), use
\eqref{eq:fdim-initial-u}--\eqref{eq:fdim-initial-theta}, and
then take the supremum in \(\tau\).  The resulting left-hand side
of \eqref{eq:fdim-material-result} is bounded by
\begin{align}\label{eq:fdim-remainder-sum}
 C_*+C_M\big[&
  \tb^{\beta-3\alpha}
  +\tb^{4-4\alpha}
  +\tb^{-2\alpha}
  +\tb^{2\beta-2\alpha-4}
  +\tb^{-2\beta}
  +\tb^{2+\alpha-3\beta}\notag\\
 &+\tb^{1-(\alpha+3\beta)/2}
  +\tb^{(\alpha-3\beta)/2}
  +\tb^{2\alpha-2-2\beta}
  +\tb^{6\alpha-6\beta}
  +\tb^{-4}
  +\tb^{-(\alpha+3\beta)/2}\big].
\end{align}
All exponents in this bracket are strictly negative when
\(2<\alpha<\beta<\alpha+2\).  In particular,
\(2\beta-2\alpha-4<0\) is the only upper-gap restriction introduced
by the material-energy calculation.
For each fixed \(M\), take \(\tb\) large enough that the bracket
multiplied by \(C_M\) is bounded by one.  This proves
\eqref{eq:fdim-material-result} with a constant independent of \(M\).
No material-energy bootstrap or Gronwall estimate was used.

\paragraph{Improvement of the temporary Hessian bound}
The pointwise elliptic estimates proved in the previous
subsection imply
\begin{align}\label{eq:fdim-hessian-u}
 &\|\nabla^2u\|_2 \le C_*\big[ \tb^{-\alpha}\|\sqrt\rho\,\dot u\|_2 +\tb^{1-\alpha}\|\nabla H\|_2 +\tb^{-\alpha}\|\nabla\theta\|_2 +\tb^{-2}\|\nabla u\|_2 \|\nabla\theta\|_2\|\nabla^2\theta\|_2\big],
\end{align}
and, after absorbing
\(C_*\tb^{-1}\|\nabla\theta\|_2\|\nabla^2\theta\|_2\),
\begin{align}\label{eq:fdim-hessian-theta}
 \|\nabla^2\theta\|_2
 \le C_*\big[&
   \tb^{-\beta}\|\sqrt\rho\,\dot\theta\|_2
  +\tb^{1-\beta}\|\nabla u\|_2
  +\tb^{\alpha-\beta}
                     \|\nabla u\|_2\|\nabla^2u\|_2\big].
\end{align}
By \eqref{eq:fdim-first-order} and
\eqref{eq:fdim-material-result}, these become
\begin{align}
 \|\nabla^2u\|_2
 &\le C_*+C_*M^{1/2}\tb^{1-\alpha}
                   +C_*\tb^{-2}\|\nabla^2\theta\|_2,
       \label{eq:fdim-hessian-u-close}\\
 \|\nabla^2\theta\|_2
 &\le C_*+C_*\tb^{\alpha-\beta}\|\nabla^2u\|_2 .
       \label{eq:fdim-hessian-theta-close}
\end{align}
First enlarge the threshold so that
\(M^{1/2}\tb^{1-\alpha}\le1\).  Substituting
\eqref{eq:fdim-hessian-theta-close} into
\eqref{eq:fdim-hessian-u-close}, the remaining coefficient of
\(\|\nabla^2u\|_2\) is \(C_*\tb^{\alpha-\beta-2}\), which can be
made at most \(1/2\).  Both Hessians are therefore uniformly bounded
by a data-dependent constant.  Squaring and taking the time
supremum proves \eqref{eq:fdim-hessian-result}.
\end{proof}

\section{All-time improvement of the magnetic-gradient bootstrap}
\label{sec:fdgc}

The material-energy estimates must enter this argument with a leading
constant independent of the tentative magnetic-gradient bound.  Their
contribution to the squared magnetic-gradient estimate need not tend to
zero as the background temperature increases.  It is instead included
in the data-dependent constant used to choose the bootstrap number.
This distinction removes the need for a separate late-time
material-derivative hypothesis.

Retain the fixed background density and the background assumptions
\eqref{eq:fixed-background}, together with $\nu$ and $d_*$ from
\eqref{eq:global-damping}. The estimates in this section require
\begin{equation}\label{eq:fdgc-regime}
 \alpha>2,\qquad \beta>\alpha.
\end{equation}
The comparison \eqref{eq:fdst-damping-comparison} holds with constants
independent of $\tb,\hb$ and the time interval.  Combined with the material-energy argument, the working
range is $2<\alpha<\beta<\alpha+2$.

\begin{lemma}[All-time magnetic-gradient improvement]
\label{lem:fdgc-improvement}
Let a sufficiently regular solution be given on $\R^2\times[0,T]$,
with finite perturbation norms, and suppose
\begin{equation}\label{eq:fdgc-bands}
 \rho=\frac{\rhob}{\hb}H,\qquad
 \frac{\rhob}{2}\le\rho\le\frac{3\rhob}{2},\qquad
 \frac{\tb}{2}\le\theta\le\frac{3\tb}{2}.
\end{equation}
Assume the basic dissipation bounds
\begin{equation}\label{eq:fdgc-base}
 \int_0^T\|\nabla u\|_2^2\dd t\le E_*\tb^{-\alpha},
 \qquad
 \int_0^T\|\nabla\theta\|_2^2\dd t
       \le E_*\tb^{1-\beta}.
\end{equation}
Define
\begin{equation}\label{eq:fdgc-functional}
 \mathcal C_H(T)=\sum_{p\in\{2,4\}}
 \left(\sup_{0\le t\le T}\|\nabla H(t)\|_p^2
                  +d_*\int_0^T\|\nabla H\|_p^2\dd t\right).
\end{equation}
Let $M\ge1$ be a tentative bootstrap number and suppose
\begin{equation}\label{eq:fdgc-bootstrap}
 \mathcal C_H(T)\le M.
\end{equation}
Suppose the first-order and material-energy argument has already
provided constants $B_*,K_{\theta,*}\ge1$ such that
\begin{equation}\label{eq:fdgc-material-all}
 \int_0^T\left(\|\dot u\|_2^2+
                   \|\nabla\dot u\|_2^2\right)\dd t
       \le B_*\tb^\alpha,
\end{equation}
and
\begin{equation}\label{eq:fdgc-temperature-uniform}
 \sup_{0\le t\le T}
       \left(\|\nabla\theta(t)\|_2^2+
                         \|\nabla^2\theta(t)\|_2^2\right)
       \le K_{\theta,*}.
\end{equation}
Here $E_*,B_*,K_{\theta,*}$ are independent of $M,T,\tb,\hb$.
In addition, assume
\begin{equation}\label{eq:fdgc-temperature-integral}
 \int_0^T\|\nabla^2\theta\|_2^2\dd t
       \le K_\theta(M)\tb^{-\beta},
\end{equation}
where $K_\theta(M)$ is finite and independent of $T,\tb,\hb$.  There are constants $C_*$ independent of $M,T,\tb,\hb$, and
$C_M$ independent of $T,\tb,\hb$, such that
\begin{align}\label{eq:fdgc-lipschitz-result}
 &\int_0^T\left(\|\nabla^2u\|_2^2+
                         \|\nabla^2u\|_4^2\right)\dd t
       \le C_M\tb^{-\alpha},\quad d_*^{-1}\int_0^T\|\nabla u\|_\infty^2\dd t
       \le C_M\tb^{-2},
\end{align}
and
\begin{align}\label{eq:fdgc-global-explicit}
 \mathcal C_H(T)
 &\le C_*\exp\!\big(C_M\tb^{-2}\big)
 \left[\sum_{p\in\{2,4\}}\|\nabla H_0\|_p^2+B_*
       +C_M\big(\tb^{-2}+\tb^{3-\alpha-\beta}\big)\right].
\end{align}
In particular, for $\tb$ sufficiently large depending on $M$,
\begin{equation}\label{eq:fdgc-improved}
 \mathcal C_H(T)
 \le C_*\left(\sum_{p\in\{2,4\}}\|\nabla H_0\|_p^2+B_*\right)
       +C_M\big(\tb^{-2}+\tb^{3-\alpha-\beta}\big),
\end{equation}
after enlarging the constants.  The constants may depend on
$\rhob,\rhob^{-1},E_*,K_{\theta,*}$, the physical parameters and
$\alpha,\beta,c_H,C_H$.  The quantity $C_M$ may additionally
depend on $B_*$ and $K_\theta(M)$.
\end{lemma}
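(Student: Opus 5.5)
The plan is to derive a damped transport equation for $\nabla H$ from the exact effective flux \eqref{eq:fixed-flux}. Gronwall's inequality is then applied with an integrating factor driven by $d_*^{-1}\|\nabla u\|_\infty^2$, and each forcing term is checked to have either a leading constant independent of $M$ (the $B_*$ term) or a negative power of $\tb$. The Hessian and Lipschitz bounds \eqref{eq:fdgc-lipschitz-result} are proved first, because they feed the Gronwall exponent.

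\emph{Step 1: Hessian and Lipschitz integrals.} I apply \eqref{eq:fixed-lame} with $p=2,4$, square it and integrate in time. For the material term I use $\|\dot u\|_4^2\le C\|\dot u\|_2\|\nabla\dot u\|_2$ together with \eqref{eq:fdgc-material-all}, which gives $\tb^{-2\alpha}\int_0^T\|\dot u\|_4^2\le C B_*\tb^{-\alpha}$. The magnetic term is controlled by the bootstrap \eqref{eq:fdgc-bootstrap}:
\[
 \tb^{2-2\alpha}\int_0^T\|\nabla H\|_4^2\le M\tb^{2-2\alpha}d_*^{-1}\le CM\tb^{-\alpha}.
\]
The term $\tb^{-2\alpha}\|\nabla\theta\|_4^2$ is handled by interpolation and \eqref{eq:fdgc-base}, \eqref{eq:fdgc-temperature-integral}. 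For the product I write
\[
 \|\nabla u\,\nabla\theta\|_4\le\|\nabla u\|_8\|\nabla\theta\|_8 .
\]
Here $\|\nabla\theta\|_8\le C K_{\theta,*}^{1/2}$ by \eqref{eq:fdgc-temperature-uniform} and the $H^1$ embedding, while $\|\nabla u\|_8\le C\|\nabla u\|_2^{1/4}\|\nabla^2u\|_4^{3/4}$. After Young's inequality, the factor $\tb^{-1}$ lets the $\nabla^2u$ contribution be absorbed for large $\tb$. This yields the first bound in \eqref{eq:fdgc-lipschitz-result}. Next, \eqref{eq:fixed-GN} gives $\|\nabla u\|_\infty^2\le C\|\nabla u\|_2^{2/3}\|\nabla^2u\|_4^{4/3}$. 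By H\"older in time this is at most $C_M\tb^{-\alpha}$ after integration, and dividing by $d_*\asymp\tb^{2-\alpha}$ gives $C_M\tb^{-2}$.

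\emph{Step 2: the damped equation.} Solving \eqref{eq:fixed-flux} for $d$ and differentiating $\Dt H+Hd=0$ gives
\[
 \Dt\nabla H+(\nabla u)^{\top}\nabla H+d\nabla H
 +\frac{H}{\nu}\Big(R\theta^{1-\alpha}\frac{\rhob}{\hb}+H\theta^{-\alpha}\Big)\nabla H
 =-\frac{H}{\nu}\Big(\nabla F+\big[R(1-\alpha)\rho\theta^{-\alpha}-\tfrac{\alpha}{2}H^2\theta^{-\alpha-1}\big]\nabla\theta\Big).
\]
On the bands \eqref{eq:fdgc-bands}, the coefficient $a=\frac{H}{\nu}\big(R\theta^{1-\alpha}\rhob/\hb+H\theta^{-\alpha}\big)$ equals $(H^2+R\rho\theta)/(\nu\theta^\alpha)$ and satisfies $a\ge c\,d_*$. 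I test with $|\nabla H|^{p-2}\nabla H$, integrate using the transport rule, and divide by $\|\nabla H\|_p^{p-2}$. This gives, for $p=2,4$,
\[
 \frac{\mathrm d}{\mathrm dt}\|\nabla H\|_p^2+c d_*\|\nabla H\|_p^2
 \le C\|\nabla u\|_\infty\|\nabla H\|_p^2+C d_*^{-1}\tb^2\big(\|\nabla F\|_p^2+\tb^{2-2\alpha}\|\nabla\theta\|_p^2\big).
\]
I then write $C\|\nabla u\|_\infty\le \tfrac c2 d_*+Cd_*^{-1}\|\nabla u\|_\infty^2$. Inserting \eqref{eq:fixed-flux-estimate}, the forcing consists of three pieces.
\begin{itemize}
 \item $\tb^{-\alpha}\|\dot u\|_p^2$, whose time integral is at most $C_*B_*$ with no $M$-dependence. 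This is precisely why $B_*$ appears unscaled in \eqref{eq:fdgc-global-explicit}.
 \item $\tb^{2-\alpha}\|\nabla\theta\|_p^2$, whose integral is at most $C_M\tb^{3-\alpha-\beta}$. For $p=2$ this follows from \eqref{eq:fdgc-base}. For $p=4$ it follows by interpolating with \eqref{eq:fdgc-temperature-integral}, which gives the better power $\tb^{5/2-\alpha-\beta}$.
 \item $\tb^{\alpha-2}\|\nabla u\,\nabla\theta\|_p^2\le C\tb^{\alpha-2}K_{\theta,*}\|\nabla u\|_{2p}^2$, whose integral is at most $C_M\tb^{-2}$ by Step 1.
\end{itemize}
Gronwall's inequality with the factor $\exp\big(C\int_0^T d_*^{-1}\|\nabla u\|_\infty^2\big)\le\exp(C_M\tb^{-2})$ then yields the supremum part of $\mathcal C_H(T)$. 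Integrating the differential inequality once more gives the $d_*\int_0^T\|\nabla H\|_p^2$ part, and together these give \eqref{eq:fdgc-global-explicit}. Finally, \eqref{eq:fdgc-improved} follows once $\exp(C_M\tb^{-2})\le2$.

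\emph{Main obstacle.} The delicate step is Step 1 in $L^4$. The product $\nabla u\,\nabla\theta$ must be controlled without an $L^\infty$ bound on $\nabla\theta$, and all $M$-dependence must stay attached to negative powers of $\tb$. In particular, the material contribution must enter only through \eqref{eq:fdgc-material-all}, and never through the bootstrap number $M$. If the absorption via $\|\nabla u\|_8$ fails, I would instead use $\|\nabla\theta\|_\infty\le C\|\nabla\theta\|_2^{1/2}\|\nabla^3\theta\|$-free bounds, namely the 2D inequality $\|f\|_\infty\le C\|f\|_4^{1/2}\|\nabla f\|_4^{1/2}$ together with an $L^4$ heat elliptic estimate.
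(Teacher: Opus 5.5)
Your proposal is correct and follows essentially the same route as the paper: Lam\'e estimates for $p=2,4$ with the magnetic term paid for by the tentative damping integral, the exact damped equation for $\nabla H$ obtained from the effective flux, an $M$-independent source $C_*B_*$ from the material term, and an integrating factor controlled by $d_*^{-1}\int_0^T\|\nabla u\|_\infty^2\,dt\le C_M\tb^{-2}$. The deviations are minor. You split $\nabla u\,\nabla\theta$ in $L^4$ as $L^8\times L^8$ instead of the paper's $L^4\times L^\infty$; there the correct two-dimensional inequality is $\|\nabla u\|_8\le C\|\nabla u\|_2^{1/2}\|\nabla^2u\|_4^{1/2}$, not exponents $1/4,3/4$, although the absorption is unaffected. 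Also, dividing by $\|\nabla H\|_p^{p-2}$ needs a regularization, such as the paper's $(\|\nabla H\|_p^p+\delta)^{2/p}$, at times where the norm vanishes.
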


\begin{proof}
All estimates are on the same arbitrary finite interval $[0,T]$.
No restriction $T\le1$, division into early and late times, or
unweighted global $L^1_tL^\infty_x$ estimate for $\nabla u$ is used.

\paragraph{Temperature and material-derivative interpolation}
The two-dimensional Gagliardo--Nirenberg inequalities imply
\begin{align}\label{eq:fdgc-A-theta}
 &\sup_{t\le T}\|\nabla\theta(t)\|_4^4
   \le C\sup_{t\le T}
       \big(\|\nabla\theta\|_2^2\|\nabla^2\theta\|_2^2\big)
   \le C K_{\theta,*}^2,
\end{align}
\begin{align}\label{eq:fdgc-T-four}
 &\int_0^T\|\nabla\theta\|_4^2\dd t
   \le C\left(\int_0^T\|\nabla\theta\|_2^2\dd t\right)^{1/2}
           \left(\int_0^T\|\nabla^2\theta\|_2^2\dd t\right)^{1/2}
   \le C_M\tb^{1/2-\beta},
\end{align}
\begin{align}\label{eq:fdgc-M-four}
 &\int_0^T\|\dot u\|_4^2\dd t
   \le C\left(\int_0^T\|\dot u\|_2^2\dd t\right)^{1/2}
           \left(\int_0^T\|\nabla\dot u\|_2^2\dd t\right)^{1/2}
   \le C B_*\tb^\alpha.
\end{align}
In particular, for $p=2,4$,
\begin{equation}\label{eq:fdgc-temperature-p}
 \int_0^T\|\nabla\theta\|_p^2\dd t
       \le C_M\tb^{1-\beta}.
\end{equation}
The leading constant in \eqref{eq:fdgc-M-four}, unlike the
constant in a tentative magnetic bound, is independent of $M$.

\paragraph{Elliptic estimates retaining the variable-coefficient product}
The constant-coefficient Lam\'e estimate from the momentum equation is
\begin{equation}\label{eq:fdgc-lame-pointwise}
 \|\nabla^2u\|_p\le C\left[
   \tb^{-\alpha}\|\dot u\|_p
   +\tb^{1-\alpha}\|\nabla H\|_p
   +\tb^{-\alpha}\|\nabla\theta\|_p
   +\tb^{-1}\|\nabla u\,\nabla\theta\|_p\right],
 \qquad p=2,4.
\end{equation}
For $p=2$, use
$\|\nabla u\,\nabla\theta\|_2
 \le C\|\nabla\theta\|_4\|\nabla u\|_2^{1/2}
                         \|\nabla^2u\|_2^{1/2}$
and absorb half of $\|\nabla^2u\|_2$.  Squaring and integrating gives
\begin{align}\label{eq:fdgc-U-two}
 \int_0^T\|\nabla^2u\|_2^2\dd t
 &\le C\tb^{-2\alpha}\int_0^T\|\dot u\|_2^2\dd t
       +C\tb^{2-2\alpha}\int_0^T\|\nabla H\|_2^2\dd t\notag\\
 &\quad+C\tb^{-2\alpha}\int_0^T\|\nabla\theta\|_2^2\dd t
       +C\tb^{-4}\left(\sup_{t\le T}\|\nabla\theta(t)\|_4^4\right)
                       \int_0^T\|\nabla u\|_2^2\dd t.
\end{align}
The magnetic term is estimated by the tentative damping integral:
\begin{equation}\label{eq:fdgc-H-contribution}
 \tb^{2-2\alpha}\int_0^T\|\nabla H\|_p^2\dd t
       \le M\tb^{2-2\alpha}d_*^{-1}
       \le CM\tb^{-\alpha},\qquad p=2,4.
\end{equation}
The other terms are bounded by
$C B_*\tb^{-\alpha}$,
$C\tb^{1-\beta-2\alpha}$ and
$C\tb^{-\alpha-4}$, respectively.  Hence
$\int_0^T\|\nabla^2u\|_2^2\dd t\le C_M\tb^{-\alpha}$.

For $p=4$, the inequalities
\[
 \|\nabla u\,\nabla\theta\|_4
       \le\|\nabla\theta\|_4\|\nabla u\|_\infty,
 \qquad
 \|\nabla u\|_\infty
       \le C\|\nabla u\|_4^{1/2}\|\nabla^2u\|_4^{1/2}
\]
allow the same absorption.  After squaring and integrating,
\begin{align}\label{eq:fdgc-U-four}
 \int_0^T\|\nabla^2u\|_4^2\dd t
 &\le C\tb^{-2\alpha}\int_0^T\|\dot u\|_4^2\dd t
       +C\tb^{2-2\alpha}\int_0^T\|\nabla H\|_4^2\dd t
       +C\tb^{-2\alpha}\int_0^T\|\nabla\theta\|_4^2\dd t\notag\\
 &\quad+C\tb^{-4}\left(\sup_{t\le T}\|\nabla\theta(t)\|_4^4\right)
       \left(\int_0^T\|\nabla u\|_2^2\dd t\right)^{1/2}
       \left(\int_0^T\|\nabla^2u\|_2^2\dd t\right)^{1/2}
 \le C_M\tb^{-\alpha}.
\end{align}
Here the last term uses
$\int_0^T\|\nabla u\|_4^2\dd t
 \le C(\int_0^T\|\nabla u\|_2^2\dd t)^{1/2}
       (\int_0^T\|\nabla^2u\|_2^2\dd t)^{1/2}$.
Finally, another two-dimensional interpolation inequality gives
\begin{align}\label{eq:fdgc-J}
 \int_0^T\|\nabla u\|_\infty^2\dd t
 &\le C\left(\int_0^T\|\nabla u\|_2^2\dd t\right)^{1/3}
       \left(\int_0^T\|\nabla^2u\|_4^2\dd t\right)^{2/3}
 \le C_M\tb^{-\alpha}.
\end{align}
Since $d_*^{-1}\le C\tb^{\alpha-2}$, this proves
\eqref{eq:fdgc-lipschitz-result}.

\paragraph{The exact gradient equation}
Use the magnetic effective flux $F$ defined in \eqref{eq:fixed-flux}.
The estimate \eqref{eq:fixed-flux-estimate} reads
\begin{equation}\label{eq:fdgc-F}
 \|\nabla F\|_p\le C\left[
   \tb^{-\alpha}\|\dot u\|_p
   +\tb^{-1}\|\nabla u\,\nabla\theta\|_p
   +\tb^{1-\alpha}\|\nabla\theta\|_p\right],\qquad p=2,4.
\end{equation}
In particular, the temperature derivative of the weighted magnetic
pressure is retained in the last term.
Differentiating $\Dt H+H\dv u=0$ and using alignment gives, for
$w=\nabla H$,
\begin{equation}\label{eq:fdgc-w}
 \Dt w+(\nabla u)^{\top} w+(\dv u)w+a w
       =-\frac H\nu\nabla F-b\nabla\theta,
\end{equation}
where
\[
 a=\frac{H^2+R\rho\theta}{\nu\theta^\alpha},\qquad
 b=\frac H\nu\left((1-\alpha)R\rho\theta^{-\alpha}
                    -\frac\alpha2H^2\theta^{-\alpha-1}\right).
\]
The coefficient bounds follow directly from
\begin{equation}\label{eq:fdgc-damping-coefficients-detail}
 \begin{aligned}
 a&=\frac{H^2+R\rho\theta}{\nu\theta^\alpha}
      \ge c\frac{\hb^2+R\rhob\tb}{\nu\tb^\alpha}=c d_*,\\
 |b|&\le C\hb\big(\rhob\tb^{-\alpha}
                         +\hb^2\tb^{-\alpha-1}\big)
      \le C\tb^{2-\alpha}\le C d_*,\qquad |H|\le C\hb.
 \end{aligned}
\end{equation}
For $p=2,4$, multiply \eqref{eq:fdgc-w} by $|w|^{p-2}w$.
The transport term is
\begin{equation}\label{eq:fdgc-transport-p-detail}
 \int \Dt w\cdot|w|^{p-2}w\,dx
   =\frac1p\frac d{dt}\|w\|_p^p
       -\frac1p\int(\dv u)|w|^p\,dx.
\end{equation}
Hence
\begin{align}\label{eq:fdgc-gradient-p-identity}
 \frac1p\frac d{dt}\|w\|_p^p+\int a|w|^p\,dx
 &=-\int((\nabla u)^{\top} w)\cdot w\,|w|^{p-2}\,dx
       -\left(1-\frac1p\right)\int(\dv u)|w|^p\,dx\notag\\
 &\quad-\int\frac H\nu\nabla F\cdot w\,|w|^{p-2}\,dx
       -\int b\nabla\theta\cdot w\,|w|^{p-2}\,dx.
\end{align}
H\"older's inequality and \eqref{eq:fdgc-damping-coefficients-detail}
give
\begin{align}\label{eq:fdgc-gradient-p-bound}
 \frac1p\frac d{dt}\|w\|_p^p+c d_*\|w\|_p^p
 &\le C\|\nabla u\|_\infty\|w\|_p^p
       +C\hb\|\nabla F\|_p\|w\|_p^{p-1}
          +C d_*\|\nabla\theta\|_p\|w\|_p^{p-1}.
\end{align}
At times when $\|w\|_p>0$, multiply by $2\|w\|_p^{2-p}$ and use
\begin{equation}\label{eq:fdgc-p-to-square-detail}
 \frac2p\|w\|_p^{2-p}\frac d{dt}\|w\|_p^p
    =\frac d{dt}\|w\|_p^2.
\end{equation}
It follows, with fixed positive $c$, that
\begin{align}\label{eq:fdgc-square-before-young}
 \frac d{dt}\|w\|_p^2+2c d_*\|w\|_p^2
 &\le C\|\nabla u\|_\infty\|w\|_p^2
       +C\hb\|\nabla F\|_p\|w\|_p
       +C d_*\|\nabla\theta\|_p\|w\|_p.
\end{align}
Each term on the right is absorbed separately:
\begin{equation}\label{eq:fdgc-young-detail}
 \begin{aligned}
 C\|\nabla u\|_\infty\|w\|_p^2
 &\le\frac c3d_*\|w\|_p^2
       +C d_*^{-1}\|\nabla u\|_\infty^2\|w\|_p^2,\\
 C\hb\|\nabla F\|_p\|w\|_p
 &\le\frac c3d_*\|w\|_p^2
       +C\frac{\hb^2}{d_*}\|\nabla F\|_p^2,\\
 C d_*\|\nabla\theta\|_p\|w\|_p
 &\le\frac c3d_*\|w\|_p^2+C d_*\|\nabla\theta\|_p^2.
 \end{aligned}
\end{equation}
Since $w=\nabla H$, subtraction of the three absorbed terms yields
\begin{align}\label{eq:fdgc-Y}
 \frac{\mathrm d}{\mathrm dt}\|\nabla H\|_p^2
                 +c d_*\|\nabla H\|_p^2
 &\le C d_*^{-1}\|\nabla u\|_\infty^2\|\nabla H\|_p^2
       +C\frac{\hb^2}{d_*}\|\nabla F\|_p^2
       +C d_*\|\nabla\theta\|_p^2.
\end{align}
To include zeros of the norm, take $\delta>0$ and put
$Y_\delta=(\|w\|_p^p+\delta)^{2/p}$.  Multiplying
\eqref{eq:fdgc-gradient-p-bound} by
$2(\|w\|_p^p+\delta)^{2/p-1}$ uses the bounds
\begin{equation}\label{eq:fdgc-zero-regularization-detail}
 \begin{aligned}
 (\|w\|_p^p+\delta)^{2/p-1}\|w\|_p^p
   &\ge Y_\delta-\delta^{2/p},\\
 (\|w\|_p^p+\delta)^{2/p-1}\|w\|_p^p
   &\le Y_\delta,\\
 (\|w\|_p^p+\delta)^{2/p-1}\|w\|_p^{p-1}
   &\le Y_\delta^{1/2}.
 \end{aligned}
\end{equation}
The same Young inequalities therefore give
\begin{align}\label{eq:fdgc-regularized-square-detail}
 Y_\delta'+c d_*Y_\delta
 &\le C d_*^{-1}\|\nabla u\|_\infty^2Y_\delta
       +C\frac{\hb^2}{d_*}\|\nabla F\|_p^2
       +C d_*\|\nabla\theta\|_p^2+C d_*\delta^{2/p}.
\end{align}
Integrating on any subinterval and letting $\delta\downarrow0$
proves \eqref{eq:fdgc-Y} in integrated form, hence almost everywhere
in time, without division by a zero norm.

\paragraph{The full-time source ledger}
Since $\hb^2/d_*\le C\tb^\alpha$, \eqref{eq:fdgc-F} gives
\begin{align}\label{eq:fdgc-source-global-ledger}
 &\frac{\hb^2}{d_*}\int_0^T\|\nabla F\|_p^2\dd t
                  +d_*\int_0^T\|\nabla\theta\|_p^2\dd t\notag\\
 &\quad\le C\left[
   \tb^{-\alpha}\int_0^T\|\dot u\|_p^2\dd t
   +\tb^{\alpha-2}\int_0^T
                         \|\nabla u\,\nabla\theta\|_p^2\dd t
   +\tb^{2-\alpha}\int_0^T\|\nabla\theta\|_p^2\dd t\right].
\end{align}
For either $p=2$ or $p=4$, the first term is bounded by $C B_*$,
by \eqref{eq:fdgc-material-all} and \eqref{eq:fdgc-M-four}.
This term is not asserted to be small.  Its independence of $M$
is exactly what is required for the bootstrap choice below.

For the product term, the uniform thermal Hessian bound gives
\begin{align*}
 \int_0^T\|\nabla u\,\nabla\theta\|_2^2\dd t
 &\le\left(\sup_{t\le T}\|\nabla\theta(t)\|_4^2\right)
                \int_0^T\|\nabla u\|_4^2\dd t
 \le C_M\tb^{-\alpha},\\
 \int_0^T\|\nabla u\,\nabla\theta\|_4^2\dd t
 &\le\left(\sup_{t\le T}\|\nabla\theta(t)\|_4^2\right)
                \int_0^T\|\nabla u\|_\infty^2\dd t
 \le C_M\tb^{-\alpha}.
\end{align*}
Thus its contribution is $C_M\tb^{-2}$.
The final term in \eqref{eq:fdgc-source-global-ledger} is at most
$C_M\tb^{3-\alpha-\beta}$ by
\eqref{eq:fdgc-temperature-p}.  Consequently,
\begin{equation}\label{eq:fdgc-source-global}
 \frac{\hb^2}{d_*}\int_0^T\|\nabla F\|_p^2\dd t
                  +d_*\int_0^T\|\nabla\theta\|_p^2\dd t
 \le C_* B_*+C_M\big(\tb^{-2}+\tb^{3-\alpha-\beta}\big).
\end{equation}
Both remainder powers are strictly negative under
$\beta>\alpha>2$.

\paragraph{Damped Gronwall estimate and independence of the leading constant}
Apply an integrating factor to \eqref{eq:fdgc-Y} on $[0,T]$,
retaining the damping integral on the left.  It follows that
\begin{align*}
 &\sup_{t\le T}\|\nabla H(t)\|_p^2
          +d_*\int_0^T\|\nabla H\|_p^2\dd t\\
 &\le C\exp\!\left(Cd_*^{-1}
                    \int_0^T\|\nabla u\|_\infty^2\dd t\right)
 \left[\|\nabla H_0\|_p^2+B_*
           +C_M\big(\tb^{-2}+\tb^{3-\alpha-\beta}\big)\right].
\end{align*}
Use \eqref{eq:fdgc-lipschitz-result} and sum over $p=2,4$.
This proves \eqref{eq:fdgc-global-explicit}.  Taking the temperature
threshold large enough that the exponential is at most two proves
\eqref{eq:fdgc-improved}.  All bounds are independent of $T$.
\end{proof}

\begin{remark}[A noncircular choice of constants]
\label{rem:fdgc-choices}
Suppose $\sum_{p\in\{2,4\}}\|\nabla H_0\|_p^2\le I_*$.
First determine the improved first-order, material-energy and
thermal-Hessian leading constants from the fixed initial data and
physical parameters.  In particular, $B_*$ and $K_{\theta,*}$
must be determined independently of the tentative magnetic number
$M$, even though the temperature thresholds used to obtain those
improvements may depend on $M$.

Let $C_*$ be the leading constant in
\eqref{eq:fdgc-global-explicit}.  Choose a single $M$ large enough
for the initial values and the other tentative bounds, and also so that
\[
 M\ge4C_*(I_*+B_*+1).
\]
Only after fixing this number choose $\tb$ sufficiently large that
all preceding improvements hold and
\[
 \exp\!\big(C_M\tb^{-2}\big)\le2,\qquad
 C_M\big(\tb^{-2}+\tb^{3-\alpha-\beta}\big)\le1.
\]
Then \eqref{eq:fdgc-global-explicit} improves
$\mathcal C_H(T)\le M$ to $\mathcal C_H(T)\le M/2$.
The appearance of $M$ in the intermediate elliptic estimates is
harmless because it contributes only to terms made small by this
last temperature choice.  If one had only a material bound
$K(M)\tb^\alpha$ instead of \eqref{eq:fdgc-material-all}, this
argument would not by itself close: the non-small source would
then be $C K(M)$.  The preceding independence of the improved
material leading constant is therefore essential.
\end{remark}

\section{Construction, regularity and uniqueness}
\label{sec:high-persistence}

We first justify continuation of smooth approximations, and then
construct the strong solution of Theorem~\ref{thm:global}.  Its constants may depend on the fixed backgrounds
$\rhob,\tb,\hb$.  No smallness or background-uniform estimate is required
at this stage.  In particular, the high Sobolev norms of the approximations
are allowed to depend on the approximation parameter.

Set $c=\hb/\rhob$ and $n=\rho-\rhob$, and retain the temperature
perturbation $q$ introduced in Section~\ref{subsec:fd-short-time}.
The aligned relation $H=c\rho$ reduces the equations exactly to
\begin{align}\label{eq:hp-density}
 \Dt\rho&=-\rho\dv u,
\end{align}
\begin{align}\label{eq:hp-velocity}
 \Dt u&=a(\rho,\theta)\mathcal L u
       +b(\rho,\theta)S_0(u)\nabla\theta
       -\nabla\mathcal W(\rho,\theta)
       +R\log\rho\,\nabla\theta,
\end{align}
\begin{align}\label{eq:hp-temperature}
 \Dt\theta&=k(\rho,\theta)\Delta\theta
       +\ell(\rho,\theta)|\nabla\theta|^2
       +a(\rho,\theta)Q(u)-R\theta\dv u,
\end{align}
where
\begin{gather*}
 \mathcal L u=\mu\Delta u+(\mu+\lambda)\nabla\dv u,\\
 a=\frac{\theta^\alpha}{\rho},\quad
 b=\frac{\alpha\theta^{\alpha-1}}{\rho},\quad
 k=\frac{\kappa\theta^\beta}{\rho},\quad
 \ell=\frac{\kappa\beta\theta^{\beta-1}}{\rho},\qquad
 \mathcal W=R\theta\log\rho+R\theta+c^2\rho.
\end{gather*}
Here $S_0$ and $Q$ are as defined in Section~\ref{sec:fixed-foundation};
we use the identity $Q(u)=S_0(u):\nabla u$.
The tensor--vector product in \eqref{eq:hp-velocity} has components
$\big(S_0(u)\nabla\theta\big)_i
 =\sum_j(S_0(u))_{ij}\partial_j\theta$.
The pressure identity used above is
\begin{equation}\label{eq:hp-pressure-identity}
 \frac1\rho\nabla\left(R\rho\theta+\frac{c^2\rho^2}{2}\right)
 =\nabla\mathcal W-R\log\rho\,\nabla\theta.
\end{equation}
It is important to retain the gradient in this formula: integration by
parts then avoids any need to estimate $\nabla^{m+1}\rho$.

\begin{lemma}[High-order persistence]\label{lem:hp-persistence}
Let $m\ge3$ be an integer and let a smooth aligned solution exist on
$[0,T_*)$, where $T_*<\infty$.  Suppose that, throughout this interval,
\begin{equation}\label{eq:hp-bands}
 0<\rho_*\le\rho\le\rho^*,\qquad
 0<\theta_*\le\theta\le\theta^*,
\end{equation}
and that, for a finite number $K$,
\begin{equation}\label{eq:hp-low-assumption}
 \sup_{t<T_*}
 \left(\|n(t)\|_{H^1}+\|\nabla n(t)\|_4
       +\|u(t)\|_{H^2}+\|q(t)\|_{H^2}\right)\le K,
 \qquad
 \int_0^{T_*}\left(\|\nabla^2u\|_4^2
                        +\|\Dt\theta\|_{H^1}^2\right)\dd t<\infty.
\end{equation}
Then, with
\[
 E_m=\|n\|_{H^m}^2+\|u\|_{H^m}^2+\|q\|_{H^m}^2,
 \qquad
 D_m=\|\nabla u\|_{H^m}^2+\|\nabla q\|_{H^m}^2,
\]
there are constants $c_0>0$ and $C<\infty$ such that
\begin{equation}\label{eq:hp-energy}
 \frac{\dd}{\dd t}E_m+c_0D_m\le C\Phi(t)E_m,
 \qquad
 \Phi=1+\|\nabla u\|_\infty^2+\|\nabla\theta\|_\infty^2
       +\|\nabla^2u\|_4^{4/3}+\|\nabla^2\theta\|_4^{4/3},
\end{equation}
where $\Phi\in L^1(0,T_*)$.
The constants depend only on $m,K$, the bands in \eqref{eq:hp-bands},
the fixed backgrounds, and the physical parameters.  Consequently
\begin{equation}\label{eq:hp-gronwall}
 \sup_{t<T_*}E_m(t)+\int_0^{T_*}D_m\dd t<\infty
 \quad\hbox{whenever }E_m(0)<\infty.
\end{equation}
\end{lemma}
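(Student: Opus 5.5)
We prove \eqref{eq:hp-energy} by a standard $H^m$ energy estimate on the reduced symmetric-type system \eqref{eq:hp-density}--\eqref{eq:hp-temperature}. Apply $\nabla^k$, $0\le k\le m$, to each equation and test against $\nabla^k n$, $\nabla^k u$ and $\nabla^k q$ respectively. The transport parts $u\cdot\nabla$ are handled by integrating by parts. This produces $\frac12\int\dv u\,|\nabla^k f|^2$ together with commutators $[\nabla^k,u\cdot\nabla]f$. The Kato--Ponce commutator estimate bounds these by $C(\|\nabla u\|_\infty\|\nabla^k f\|_2+\|\nabla f\|_\infty\|\nabla^k u\|_2)$.

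For $f=n$ we cannot use $\|\nabla n\|_\infty$, since the low-order assumption only controls $\nabla n\in L^4$. We therefore split the top-order density terms differently. The term $\rho\,\nabla^k\dv u$ in the density equation is paired with the pressure term. The hyperbolic coupling is
\[
-\nabla\mathcal W=-(R\theta/\rho+c^2)\nabla n-\cdots ,
\]
so we weight the density estimate by $\omega=(R\theta/\rho+c^2)/\rho$, which is bounded above and below by \eqref{eq:hp-bands}. Then $\int\omega\rho\,\nabla^k n\cdot\nabla^k\dv u$ cancels against $-\int(R\theta/\rho+c^2)\nabla^{k+1}n\cdot\nabla^k u$ after integration by parts. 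The errors contain derivatives of $\omega$, which involve $\nabla\theta,\nabla n\in L^\infty\cup L^4$.

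This is where the remark after \eqref{eq:hp-pressure-identity} enters. Rather than expanding $\nabla^{k+1}\mathcal W$ (which would require $\nabla^{m+1}\rho$ with a $\theta$-derivative coefficient), we keep $\nabla^k\nabla\mathcal W$, integrate by parts onto $\nabla^k\dv u$, and absorb it using the viscous dissipation. Only the principal part $c^2\nabla n$ plus $R(\theta/\rho)\nabla n$ needs the cancellation above.

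The parabolic terms are treated as follows. Testing $a\mathcal L\nabla^k u$ with $\nabla^k u$ gives, after integration by parts, at least $c\,\theta_*^\alpha/\rho^*\|\nabla^{k+1}u\|_2^2$ by the coercivity \eqref{eq:fixed-coercivity}. The error $\nabla a\cdot\nabla^{k+1}u\,\nabla^k u$ is bounded by $\|\nabla a\|_\infty$, and $\nabla a$ contains $\nabla n$. For $k\le m$ we place $\nabla n$ in $L^4$ and $\nabla^k u$ in $L^4$, which costs $\|\nabla^k u\|_2^{1/2}\|\nabla^{k+1}u\|_2^{1/2}$, and then use Young. Heat conduction is handled identically with $k$ and $\ell$. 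Commutators $[\nabla^k,a]\mathcal Lu$, and the nonlinearities $bS_0(u)\nabla\theta$, $\ell|\nabla\theta|^2$, $aQ(u)$ and $R\log\rho\nabla\theta$, are estimated by Moser product inequalities with one factor in $L^\infty$ or $L^4$. For instance,
\[
\|\nabla^{k-1}(\nabla a\,\nabla^2u)\|_2\le C\big(\|\nabla a\|_{L^4}\|\nabla^{k+1}u\|_{L^4}+\|\nabla^{k}a\|_{2}\|\nabla^2u\|_\infty\big).
\]
The second piece is replaced by Gagliardo--Nirenberg in 2D: we write $\|\nabla^2 u\|_\infty\lesssim\|\nabla^2u\|_4^{1/2}\|\nabla^3u\|_4^{1/2}$, or more simply interpolate $\|\nabla^k a\|_4$. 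After Young this yields $\|\nabla^2u\|_4^{4/3}E_m$ plus $\varepsilon D_m$, which is the origin of the $4/3$ powers in $\Phi$. Composite functions such as $a(\rho,\theta)$ are controlled in $H^m$ by $C(K,\text{bands})(1+\|n\|_{H^m}+\|q\|_{H^m})$. Summing over $k$ and absorbing the $\varepsilon D_m$ terms gives \eqref{eq:hp-energy}.

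Integrability of $\Phi$ follows from \eqref{eq:hp-low-assumption}. The inequality $\|\nabla u\|_\infty^2\le C\|\nabla u\|_4\|\nabla^2u\|_4$ together with $\nabla^2u\in L^2_tL^4$ handles the velocity term. For temperature, the equation \eqref{eq:hp-temperature} read as an elliptic problem $k\Delta\theta=\Dt\theta-\cdots$ with $\Dt\theta\in L^2_tH^1\subset L^2_tL^4$ gives $\nabla^2\theta\in L^2_tL^4$; the lower-order terms are bounded via $K$, with $|\nabla\theta|^2\in L^4$ after absorbing. Hence $\|\nabla\theta\|_\infty^2$ and $\|\nabla^2\theta\|_4^{4/3}$ lie in $L^1(0,T_*)$ since $T_*<\infty$, and Gronwall gives \eqref{eq:hp-gronwall}. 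The main obstacle is the top-order density estimate. There every term must avoid both $\nabla^{m+1}n$ and $\|\nabla n\|_\infty$, so the weighted cancellation and the gradient form of \eqref{eq:hp-pressure-identity} must be arranged exactly.
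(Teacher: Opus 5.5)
Your architecture is the paper's: $H^m$ energy estimates on \eqref{eq:hp-density}--\eqref{eq:hp-temperature}, the pressure kept in gradient form, $\nabla n$ used only in $L^4$, Young's inequality producing the $4/3$ powers, and the thermal elliptic estimate giving $\Phi\in L^1$. However, the step you call the main obstacle is misdiagnosed, and as written it fails.

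No hyperbolic--parabolic cancellation is needed, because $D_m$ contains $\|\nabla^{m+1}u\|_2^2$. The density coupling obeys $\|\partial^\gamma(\rho\dv u)\|_2\le C\|\rho\|_\infty\|\nabla u\|_{H^m}+C\|\dv u\|_\infty\|n\|_{H^m}$, so its pairing with $\partial^\gamma n$ is at most $\varepsilon D_m+C_\varepsilon(1+\|\nabla u\|_\infty)E_m$. The pressure, integrated by parts to $\int\partial^\gamma(\mathcal W-\mathcal W(\rhob,\tb))\,\dv\partial^\gamma u$, is at most $\varepsilon D_m+C_\varepsilon E_m$. The principal part $\mathcal W_\rho\nabla^k n$ is absorbed along with everything else, so your remark that it alone ``needs the cancellation'' contradicts your own integration by parts.

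The weight is also not harmless. Since $\omega=(R\theta/\rho+c^2)/\rho$ depends on $\theta$, $\frac{\mathrm d}{\mathrm dt}\int\omega|\nabla^m n|^2$ generates $\frac12\int(\Dt\omega+\omega\dv u)|\nabla^m n|^2$ with $\partial_\theta\omega=R/\rho^2\neq0$. You accounted only for spatial derivatives of $\omega$. Because $n$ has no dissipation, this term needs $\|\Dt\theta\|_\infty$, hence $\|\Delta\theta\|_\infty$ by \eqref{eq:hp-temperature}. That quantity is not in $\Phi$ and does not follow from $\Dt\theta\in L^2H^1$. Interpolating it against $D_m$, e.g. $\|\nabla^2q\|_\infty\le C\|\nabla^2q\|_2^{1/2}\|\nabla^4q\|_2^{1/2}$, leaves $E_mD_m^{1/4}\le\varepsilon D_m+CE_m^{4/3}$. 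This is superlinear, and \eqref{eq:hp-energy}--\eqref{eq:hp-gronwall} do not follow from it.

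The commutator bookkeeping also does not close at $|\gamma|=m$ as displayed. In $\|\nabla^{k-1}(\nabla a\,\nabla^2u)\|_2\le C(\|\nabla a\|_4\|\nabla^{k+1}u\|_4+\|\nabla^ka\|_2\|\nabla^2u\|_\infty)$ with $k=m$:
\begin{itemize}
\item the factor $\|\nabla^{m+1}u\|_4$ needs $\nabla^{m+2}u$, which $D_m$ does not contain;
\item the interpolation $\|\nabla^2u\|_\infty\lesssim\|\nabla^2u\|_4^{1/2}\|\nabla^3u\|_4^{1/2}$ again gives a power of $E_m$ above one;
\item the alternative $\|\nabla^m a\|_4$ needs $\nabla^{m+1}n$.
\end{itemize}
The device that works is to measure the commutator in $L^{4/3}$ and the tested derivative in $L^4$:
\[
 \|[\partial^\gamma,A]\nabla^2v\|_{4/3}\le C\big(\|\nabla A\|_4\|\nabla^{k+1}v\|_2+\|\nabla^kA\|_2\|\nabla^2v\|_4\big),\qquad
 \|\partial^\gamma v\|_4\le CE_m^{1/4}D_m^{1/4}.
\]
This gives $C_KE_m^{1/4}D_m^{3/4}+C\|\nabla^2v\|_4E_m^{3/4}D_m^{1/4}\le\varepsilon D_m+C(1+\|\nabla^2v\|_4^{4/3})E_m$. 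This, not $\|\nabla^2u\|_\infty$, is the source of the $4/3$ power in $\Phi$.

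Similarly, you note that $\|\nabla n\|_\infty$ is unavailable in the density transport commutator but then discuss a different term instead of supplying a replacement. What is needed is $\|[\partial^\gamma,u\cdot\nabla]n\|_2\le C(\|\nabla u\|_\infty\|\nabla^kn\|_2+\|\nabla^ku\|_4\|\nabla n\|_4)$; its second term contributes $C_KE_m^{3/4}D_m^{1/4}\le\varepsilon D_m+CE_m$.

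With these replacements, and with the weight dropped, your outline becomes the paper's proof. Your integrability argument for $\Phi$ is fine as written.
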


\begin{proof}
We first supply the integrability of the coefficients in
\eqref{eq:hp-energy}.  Write $z=\Dt\theta$.  The thermal equation and the
$L^4(\R^2)$ estimate for the Laplacian imply
\begin{align}
 \|\nabla^2\theta\|_4
 &\le C\big(\|z\|_4+\|\nabla u\|_4
                    +\|\nabla u\|_8^2+\|\nabla\theta\|_8^2\big).
 \label{eq:hp-thermal-L4}
\end{align}
Indeed, one divides
$\kappa\theta^\beta\Delta\theta
 =\rho z+R\rho\theta\dv u-\theta^\alpha Q(u)
  -\kappa\beta\theta^{\beta-1}|\nabla\theta|^2$
by its strictly positive coefficient before applying that estimate.
In two dimensions $H^1$ embeds in every finite $L^p$ space.  The right
side of \eqref{eq:hp-thermal-L4} therefore belongs to $L^2(0,T_*)$ by
\eqref{eq:hp-low-assumption}.  The embedding
$W^{1,4}(\R^2)\hookrightarrow L^\infty(\R^2)$, applied to the gradients,
now gives
\begin{equation}\label{eq:hp-gradient-integrability}
 \nabla u,\nabla\theta\in L^2(0,T_*;L^\infty),
 \qquad \nabla^2u,\nabla^2\theta\in L^2(0,T_*;L^4).
\end{equation}
Thus $\Phi$ is integrable on this finite time interval.

We record the two commutator estimates which permit the use of
$\nabla\rho\in L^4$, rather than $\nabla\rho\in L^\infty$.
For a multi-index $\gamma$ of length $k\ge1$,
\begin{align}\label{eq:hp-parabolic-comm}
 \|[\partial^\gamma,A]\partial_i\partial_kv\|_{4/3}
 &\le C_m\big(
       \|\nabla A\|_4\|\nabla^{k+1}v\|_2
       +\|\nabla^k A\|_2\|\nabla^2v\|_4\big),
\end{align}
\begin{align}\label{eq:hp-transport-comm}
 \|[\partial^\gamma,u\cdot\nabla]\rho\|_2
 &\le C_m\big(
       \|\nabla u\|_\infty\|\nabla^k\rho\|_2
       +\|\nabla^ku\|_4\|\nabla\rho\|_4\big).
\end{align}
For completeness, these integer-order inequalities follow directly from
Leibniz' rule and the Gagliardo--Nirenberg inequalities.  For an
intermediate term with $r$ derivatives on $A$, interpolate between
$\nabla A\in L^4$ and $\nabla^kA\in L^2$, and between
$\nabla^2v\in L^4$ and $\nabla^{k+1}v\in L^2$.
The interpolation parameters are $(r-1)/(k-1)$ and its complement, and
the resulting reciprocal Lebesgue exponents sum to $3/4$.
The product is bounded by the geometric mean of the two terms on the
right of \eqref{eq:hp-parabolic-comm}, and then by their sum.
The endpoint terms, and the case $k=1$, are immediate.
For \eqref{eq:hp-transport-comm}, use instead the endpoint pairs
$(\nabla u,L^\infty),(\nabla^ku,L^4)$ and
$(\nabla\rho,L^4),(\nabla^k\rho,L^2)$; the reciprocal exponents now sum
to $1/2$.  This also proves the estimates componentwise for vector and
tensor functions.

For any smooth function $A=A(\rho,\theta)$ on the compact range in
\eqref{eq:hp-bands}, composition estimates yield
\begin{equation}\label{eq:hp-coefficient-bounds}
 \|\nabla A\|_4\le C_K,
 \qquad
 \|A-A(\rhob,\tb)\|_{H^m}\le C_K E_m^{1/2}.
\end{equation}
The latter tame estimate follows by differentiating $A$, interpolating
each factor between its $L^\infty$ norm and its highest Sobolev
derivative, and applying H\"older's inequality.  Only the bounded ranges
of $\rho,\theta$ enter the composition constant.

Apply $\partial^\gamma$, $0\le|\gamma|\le m$, to
\eqref{eq:hp-density}--\eqref{eq:hp-temperature} and test by the
corresponding derivatives of $n,u,q$.  We describe all types of terms.

\emph{Principal parabolic terms.}
The ellipticity of $\mathcal L$ follows from $\mu>0$ and
$\mu+\lambda\ge\mu/3>0$; the latter is a consequence of
$2\mu+3\lambda\ge0$.
Integration by parts in $a\mathcal L\partial^\gamma u$ and
$k\Delta\partial^\gamma q$ yields positive multiples of
$\|\nabla\partial^\gamma u\|_2^2$ and
$\|\nabla\partial^\gamma q\|_2^2$.
The terms with a derivative on $a$ or $k$ are bounded using
\[
 \|f\|_4\le C\|f\|_2^{1/2}\|\nabla f\|_2^{1/2},
 \qquad
 \|\nabla A\|_4\|\nabla f\|_2\|f\|_4
 \le\varepsilon\|\nabla f\|_2^2
          +C_\varepsilon\|\nabla A\|_4^4\|f\|_2^2.
\]
For the differentiated coefficients, \eqref{eq:hp-parabolic-comm}
and \eqref{eq:hp-coefficient-bounds} give, after summation,
\begin{align*}
 \left|\sum_{1\le|\gamma|\le m}
  \int[\partial^\gamma,A]\nabla^2v\,
                 \partial^\gamma v\dd x\right|
 &\le\varepsilon D_m
    +C_\varepsilon\big(1+\|\nabla^2v\|_4^{4/3}\big)E_m,
 \qquad v=u\ \hbox{or}\ q.
\end{align*}
The power $4/3$ is obtained from
$\|\nabla^2v\|_4 E_m^{3/4}D_m^{1/4}$ by Young's inequality.

\emph{Transport and density terms.}
Integration by parts in the undifferentiated transport operator produces
at most $C\|\dv u\|_\infty E_m$.
For the density commutator, \eqref{eq:hp-transport-comm} yields
\[
 C\|\nabla u\|_\infty E_m
 +C\|\nabla\rho\|_4 E_m^{3/4}D_m^{1/4}
 \le\varepsilon D_m+C_K(1+\|\nabla u\|_\infty)E_m.
\]
The derivatives of $\rho\dv u$ are bounded by
$C\|\rho\|_\infty\|\nabla u\|_{H^m}
 +C\|\dv u\|_\infty\|n\|_{H^m}$.
The velocity and temperature transport commutators are bounded by
$C(\|\nabla u\|_\infty+\|\nabla\theta\|_\infty)E_m$.
These estimates include the zeroth-order equations, treated directly.

\emph{Pressure.}
By \eqref{eq:hp-pressure-identity}, all the terms from
$-\nabla\mathcal W$ have the form
\[
 \int\partial^\gamma(\mathcal W-\mathcal W(\rhob,\tb))\,
                   \dv\partial^\gamma u\dd x,
\]
and are bounded by $\varepsilon D_m+C_\varepsilon E_m$.
The remaining pressure term satisfies
\[
 \|\log\rho\,\nabla\theta\|_{H^m}
 \le C\|\nabla q\|_{H^m}
          +C\|\nabla\theta\|_\infty E_m^{1/2},
\]
and hence contributes at most
$\varepsilon D_m+C_\varepsilon(1+\|\nabla\theta\|_\infty^2)E_m$.

\emph{Gradient products and thermal pressure.}
All remaining nonlinearities are of the forms
$B(\rho,\theta)\nabla v\nabla w$, with $v,w\in\{u,q\}$,
or $\theta\dv u$.
The ordinary tame product estimate, together with
\eqref{eq:hp-coefficient-bounds}, gives
\begin{align*}
 \|B\nabla v\nabla w\|_{H^m}
 &\le C_K\big(\|\nabla v\|_\infty\|\nabla w\|_{H^m}
             +\|\nabla w\|_\infty\|\nabla v\|_{H^m}
             +\|\nabla v\|_\infty\|\nabla w\|_\infty E_m^{1/2}\big).
\end{align*}
After testing and Young's inequality, the contribution is bounded by
\[
 \varepsilon D_m+
 C_\varepsilon(\|\nabla u\|_\infty^2
                    +\|\nabla\theta\|_\infty^2)E_m.
\]
Here the product of the two gradient $L^\infty$ norms is bounded by
half the sum of their squares; no fourth power in time is required.
Finally,
$\|\theta\dv u\|_{H^m}
 \le C\|\nabla u\|_{H^m}
       +C\|\dv u\|_\infty\|q\|_{H^m}$,
which is of the same admissible type.
Taking the finitely many $\varepsilon$'s sufficiently small proves
\eqref{eq:hp-energy}.  Gronwall's inequality and
\eqref{eq:hp-gradient-integrability} prove \eqref{eq:hp-gronwall}.
\end{proof}

\subsection{Smooth seeds and the role of approximation}
\label{subsec:hp-seed}

The classical Cauchy theory for quasilinear hyperbolic--parabolic systems
provides smooth seeds.  To apply the theorem of Vol'pert and Khudyaev,
\citep[Theorem~1 and Remark~1]{VolpertKhudyaev1972},
take $(u,\theta)$ as the parabolic variables and $\rho$ as the scalar
hyperbolic variable.  In the equations before division by $\rho$, the
time matrices are positive on \eqref{eq:hp-bands}.  The hyperbolic spatial
matrices are scalar and therefore symmetric.  The two principal
parabolic blocks are the strongly elliptic Lam\'e operator multiplied
by $\theta^\alpha$ and the positive scalar diffusion
$\kappa\theta^\beta\Delta$.  Their leading symbols have no coupling
with each other.  The pressure gradient and the gradient products are
permitted lower-order terms, smooth on the positive phase region.
In the shifted variables $(n,u,q)$, this region is given by
$n>-\rhob$ and $q>-\tb$, with unrestricted velocity and
first-derivative coordinates; it is convex and contains the origin.
The lower-order terms vanish at the constant equilibrium.
For completeness, if the state and its first derivatives range over
a compact subset $\Omega'$ of this region, integration by parts gives
\begin{align}\label{eq:hp-garding}
 -\int\phi\cdot\theta^\alpha\mathcal L\phi\,dx
 -\kappa\int\psi\theta^\beta\Delta\psi\,dx
 &\ge c_{\Omega'}\big(\|\nabla\phi\|_2^2+\|\nabla\psi\|_2^2\big)
       -C_{\Omega'}\big(\|\phi\|_2^2+\|\psi\|_2^2\big)
\end{align}
for smooth compactly supported test functions $(\phi,\psi)$.
Indeed, the principal terms are coercive since
$\mu+\lambda\ge\mu/3>0$; the remaining terms contain
$\nabla\theta^\alpha$ or $\nabla\theta^\beta$, which are bounded
on $\Omega'$, and are absorbed by Young's inequality.
This verifies the variable-coefficient parabolic coercivity required
by the smooth Cauchy theory, locally around every compact positive range.

For precision, the general theorem in that source uses an integer
$l>d/2+3$; its quasilinear refinement uses $l>d/2+2$.
We may simply mollify the initial perturbations to smooth data belonging
to $H^5(\R^2)$ (indeed to every $H^m$), and use the general theorem.
This choice is made only for the approximations and does not impose
$H^5$, or $H^3$, on the original data.  Lemma~\ref{lem:hp-persistence}
applies at $m=5$.  It prevents a finite loss of the high Sobolev norm
whenever the positive bands and the low-order estimates are maintained.
The classical continuation theorem therefore extends each smooth
approximation on every such finite interval.  The initial $E_5$ may
diverge along the approximating sequence; this causes no difficulty
because compactness uses the uniform low-order estimates, not
\eqref{eq:hp-gronwall} uniformly in the sequence.

Positive mollification preserves the initial density and temperature
bands.  Defining $H_0^{(\varepsilon)}=c\rho_0^{(\varepsilon)}$ preserves
the alignment exactly.  The initial material derivatives, computed
from the equations, converge in $L^2$ under convergence of
$n_0$ in $H^1\cap W^{1,4}$ and of $(u_0,q_0)$ in $H^2$:
the only extra products are products of first derivatives in $L^4$.
Hence their $L^2$ bounds require no additional compatibility hypothesis
when the density is bounded away from zero.

The remaining subsections supply the simultaneous bootstrap, compactness,
strong time continuity and uniqueness arguments.

\subsection{Simultaneous bootstrap and global smooth approximations}
\label{subsec:global-bootstrap}

We now prove Theorem~\ref{thm:global}. Start with smooth aligned data
and their classical local solution from
\citep[Theorem~1 and Remark~1]{VolpertKhudyaev1972}.
Use the pointwise bands \eqref{eq:fixed-bands} and the tentative bounds
\begin{equation}\label{eq:global-bootstrap}
 \frac{\mathcal A_1(T)}{\tb^\alpha}\le M,\qquad
 \frac{\mathcal A_2(T)}{\tb^\beta}\le M,\qquad
 \sup_{t\le T}\big(\|\nabla^2u(t)\|_2^2+
                         \|\nabla^2\theta(t)\|_2^2\big)\le M,\qquad
 \mathcal C_H(T)\le M .
\end{equation}
Here the functionals are defined in
\eqref{eq:fdst-A1}--\eqref{eq:fdst-A2} and
\eqref{eq:fdgc-functional}. All four are continuous in the endpoint
for a smooth solution.

First determine the data-dependent constants $K_1,K_2$ in
Lemma~\ref{lem:fd-short-time}, and then $K_{\rm mat},K_{\rm Hess}$ in
Lemma~\ref{lem:fd-initial-material}. The constants do not depend on
$M$; only the thresholds used to obtain them do.
The latter estimates imply the inputs of
Lemma~\ref{lem:fdgc-improvement} with, for example,
\[
 B_*=1+2K_1/\rhob+K_{\rm mat},\qquad
 K_{\theta,*}=1+K_2+K_{\rm Hess}.
\]
The integrated thermal Hessian bound comes from
\eqref{eq:fdst-hessian-integrals}, with its permitted $M$-dependent
constant. Choose $M$ large enough to contain the initial values and
four times each improved leading bound, including
$C_*(2C_0^2+B_*+1)$; the data bound gives
$\|\nabla H_0\|_2^2+\|\nabla H_0\|_4^2\le2C_0^2$.
Only then choose $\tb$ larger than every threshold in the three
lemmas. The four quantities in \eqref{eq:global-bootstrap} are
thereby at most $M/2$.

The pointwise bounds improve as well. The basic energy,
alignment and the magnetic-gradient bound give
\begin{equation}\label{eq:global-pointwise-density}
 \frac{\|\rho-\rhob\|_\infty}{\rhob}
 =\frac{\|H-\hb\|_\infty}{\hb}
 \le\frac C{\hb}\|H-\hb\|_2^{1/3}\|\nabla H\|_4^{2/3}
 \le C_M\tb^{-1}.
\end{equation}
The improved, unweighted thermal Hessian bound gives
\begin{equation}\label{eq:global-pointwise-temperature}
 \frac{\|\theta-\tb\|_\infty}{\tb}
 \le C\tb^{-1}\|\theta-\tb\|_2^{1/2}
                         \|\nabla^2\theta\|_2^{1/2}
 \le C_M\tb^{-3/4}.
\end{equation}
Increasing $\tb$ once more makes both relative errors at most $1/3$.
This proves the strict bands in \eqref{eq:global-bands}.
The initial bands are even stricter by Proposition~\ref{prop:fixed-initial}.

These choices are independent of the time endpoint and the high norms
of a smooth approximation. A first exit from any tentative bound is
therefore impossible while the smooth solution exists.
If its maximal existence time were finite, the established low bounds
would satisfy Lemma~\ref{lem:hp-persistence}. The high Sobolev norm
would remain finite, and the values would remain in a compact positive
phase region. The classical continuation theorem would extend the
solution, a contradiction. Every smooth aligned approximation is thus
global.

More explicitly, let $\eta_\varepsilon$ be a nonnegative smooth
approximate identity of mass one, and set
\[
 H_0^\varepsilon=\hb+\eta_\varepsilon*(H_0-\hb),\quad
 \rho_0^\varepsilon=(\rhob/\hb)H_0^\varepsilon,\quad
 u_0^\varepsilon=\eta_\varepsilon*u_0,\quad
 \theta_0^\varepsilon=\tb+\eta_\varepsilon*(\theta_0-\tb).
\]
These data belong to every $H^m$ perturbation class, preserve alignment
and positivity, and have the same bound $C_0$.
The corresponding solutions satisfy all the low-order estimates
uniformly in $\varepsilon$ on every finite interval.
The high $H^5$ estimate is used separately for each approximation;
its initial value need not be bounded as $\varepsilon\downarrow0$.

\subsection{Compactness and passage to the equations}
\label{subsec:global-compactness}

Fix $T<\infty$. In this subsection, constants may depend on the now
fixed $\tb,\hb$ and on $T$, but not on $\varepsilon$.
The estimates already proved give uniform bounds for
\[
 H^\varepsilon-\hb\text{ in }L^\infty(0,T;H^1\cap D^{1,4}),
 \quad
 (u^\varepsilon,\theta^\varepsilon-\tb)
                 \text{ in }L^\infty(0,T;H^2),
 \quad
 (\dot u^\varepsilon,\dot\theta^\varepsilon)
                 \text{ in }L^\infty L^2\cap L^2H^1 .
\]
The $L^4$ thermal estimate \eqref{eq:hp-thermal-L4} also shows
$(\nabla^2u^\varepsilon,\nabla^2\theta^\varepsilon)\in L^2L^4$
uniformly on this interval.
Since $u_t=\dot u-u\cdot\nabla u$ and
$\theta_t=\dot\theta-u\cdot\nabla\theta$, differentiation gives
\begin{align}\label{eq:global-time-derivatives}
 \|\nabla u_t\|_2
 &\le\|\nabla\dot u\|_2+C\|\nabla u\|_4^2
                                    +\|u\|_\infty\|\nabla^2u\|_2,\notag\\
 \|\nabla\theta_t\|_2
 &\le\|\nabla\dot\theta\|_2
       +\|\nabla u\|_4\|\nabla\theta\|_4
                                    +\|u\|_\infty\|\nabla^2\theta\|_2 .
\end{align}
Consequently $(u_t^\varepsilon,\theta_t^\varepsilon)$ is bounded in
$L^\infty L^2\cap L^2H^1$.
The transport equation gives
$H_t^\varepsilon=-u^\varepsilon\cdot\nabla H^\varepsilon
-H^\varepsilon\dv u^\varepsilon$, bounded in
$L^\infty(0,T;L^2\cap L^4)$.

Rellich compactness on bounded spatial balls, the preceding time
bounds and the Arzel\`a--Ascoli theorem give a subsequence such that,
locally in space,
\begin{equation}\label{eq:global-local-convergence}
 \begin{aligned}
 &(u^\varepsilon,\theta^\varepsilon-\tb)
       \longrightarrow(u,\theta-\tb)
                &&\text{in }C([0,T];H^s_{\rm loc}),\quad s<2,\\
 &H^\varepsilon-\hb\longrightarrow H-\hb
                &&\text{in }C([0,T];L^2_{\rm loc}).
 \end{aligned}
\end{equation}
The uniform $W^{1,4}$ bound upgrades the latter convergence locally
to uniform convergence in space and time. Choosing $s>3/2$
also gives local strong convergence of $\nabla u^\varepsilon$ and
$\nabla\theta^\varepsilon$ in $C_tL^4_x$.
All bounded coefficients consequently converge strongly locally;
the first derivatives of $H^\varepsilon$ and the second derivatives
of the parabolic variables converge weakly in their displayed spaces.

These convergences justify each equation in distributions.
In particular, the quadratic viscous heating converges strongly
locally in $L^2$, the variable-coefficient stresses and heat fluxes
converge in distributions, and the time terms converge by their
uniform bounds. Alignment passes to the limit exactly.
A diagonal subsequence over balls and integer time endpoints gives
a global solution. Weak lower semicontinuity preserves the global
bounds; no compact embedding on all of $\R^2$ is used.
The time-derivative bounds and the convergence of the initial data
identify the initial traces in $L^2$. We next prove continuity in
the stronger spaces asserted in the theorem.

\subsection{Strong time continuity, including the initial time}
\label{subsec:global-continuity}

First, $u,\theta-\tb\in C([0,T];H^s)$ for every $s<2$.
Indeed the $L^\infty_tL^2_x$ time derivatives give global Lipschitz
continuity into $L^2$, and interpolation with the uniform $H^2$
bound yields the assertion. In particular their values are continuous
in $L^\infty$ and their gradients in $L^4$.

The transport velocity belongs to $L^1(0,T;W^{1,\infty})$, because
$u\in L^\infty H^2$ and $\nabla^2u\in L^2L^4$.
Its flow is therefore bi-Lipschitz on each finite interval.
Differentiating the magnetic equation gives the linear equation
\[
 \Dt\nabla H
 =-(\nabla u)^{\top}\nabla H-(\dv u)\nabla H-H\nabla\dv u .
\]
The matrix coefficient belongs to $L^1L^\infty$ and the last term
belongs to $L^1(L^2\cap L^4)$. The flow formula, initially for smooth
functions and then by density in $L^2\cap L^4$, yields
$\nabla H\in C([0,T];L^2\cap L^4)$.
Together with $H-\hb\in C L^2$ this proves the transported part of
\eqref{eq:fixed-target-space}. It also gives $\rho\in C L^\infty$
after subtraction of its background.

For clarity, the material derivatives require a separate argument;
boundedness in $L^\infty L^2$ alone would not prove their continuity.
Put $w=\dot u$, $z=\dot\theta$.
The conservative material equations underlying
\eqref{eq:fdim-momentum-identity} and
\eqref{eq:fdim-temperature-identity} imply
\[
 (\sqrt\rho\,w)_t,\;(\sqrt\rho\,z)_t
                       \in L^2(0,T;H^{-1}).
\]
We verify the products in this statement. Both material derivatives
belong to $L^2H^1$, hence to $L^2L^4$; both spatial gradients
$\nabla u,\nabla\theta$ belong to $L^\infty L^4$.
Thus $z\nabla u$, $z\nabla\theta$ and the quadratic gradient
fluxes belong to $L^2L^2$. In the thermal source,
$zQ(u)$ and $\nabla u:\nabla w$ belong to
$L^2L^{4/3}\subset L^2H^{-1}$, and the cubic velocity-gradient
terms have the same property on $[0,T]$.
Transport fluxes are in $L^2L^2$.
Multiplication by $\rho^{-1/2}$ maps $H^1$ continuously into itself:
\[
 \|\nabla(\rho^{-1/2}\varphi)\|_2
 \le C\|\nabla\varphi\|_2+C\|\nabla\rho\|_4\|\varphi\|_4
 \le C\|\varphi\|_{H^1}.
\]
Finally, $\rho_t\in L^\infty L^4$, so the derivative of this
multiplier produces only $L^2H^{-1}$ terms.
Also $\sqrt\rho\,w,\sqrt\rho\,z\in L^2H^1$.

The Hilbert-triple continuity lemma
$L^2H^1\cap H^1H^{-1}\hookrightarrow C L^2$
therefore applies; see \citep[Chapter~I]{LionsMagenes1972}.
It follows that $w,z\in C([0,T];L^2)$.
Their initial traces satisfy
\[
 \sqrt{\rho_0}\,w(0)=-g_1,\qquad
 \sqrt{\rho_0}\,z(0)=-g_2,
\]
with $g_1,g_2$ defined in \eqref{eq:fixed-compatibility}.
To justify the traces explicitly, the same product estimates give
uniform bounds for
$Y^\varepsilon=\sqrt{\rho^\varepsilon}\,\dot u^\varepsilon$
and $Z^\varepsilon=\sqrt{\rho^\varepsilon}\,\dot\theta^\varepsilon$
in $L^2H^1\cap H^1H^{-1}$. Their weak limits in that Hilbert space
are $\sqrt\rho\,w$ and $\sqrt\rho\,z$, by the local convergences above.
Evaluation at $t=0$ is a continuous linear map from this space into
$L^2$. Since $Y^\varepsilon(0)=-g_1^\varepsilon$ and
$Z^\varepsilon(0)=-g_2^\varepsilon$, with
$g_i^\varepsilon\to g_i$ strongly in $L^2$, the displayed initial
identities follow. No extra compatibility hypothesis has been introduced.

It remains to recover the highest spatial derivatives continuously.
The expanded thermal equation is
\[
 \kappa\Delta\theta
 =\rho\theta^{-\beta}z+R\rho\theta^{1-\beta}\dv u
       -\theta^{\alpha-\beta}Q(u)
       -\kappa\beta\theta^{-1}|\nabla\theta|^2 .
\]
Every term on the right belongs to $C([0,T];L^2)$ by the
continuities just proved. The $L^2$ elliptic estimate therefore
gives $\theta-\tb\in C H^2$.
Likewise,
\[
 \mathcal L u=\rho\theta^{-\alpha}w
       +\theta^{-\alpha}\nabla P_c
       -\alpha\theta^{-1}S_0(u)\nabla\theta
\]
has a continuous $L^2$ right-hand side: use
$\nabla\rho,\nabla H\in C L^2$ and the continuous $L^4$ gradients.
The Lam\'e estimate gives $u\in C H^2$.
This proves all continuity assertions in
\eqref{eq:fixed-target-space}, including at $t=0$.

\subsection{Uniqueness and completion of the proof}
\label{subsec:global-uniqueness}

Take two solutions in the displayed strong class with the same
initial data and backgrounds, and write
$\delta H=H_1-H_2$, $\delta u=u_1-u_2$ and
$\delta\theta=\theta_1-\theta_2$.
Alignment gives $\delta\rho=(\rhob/\hb)\delta H$.
On $[0,T]$ both solutions have finite $H^2$ and $W^{1,4}$ bounds,
and positive density and temperature bounds. Constants in this
uniqueness argument may depend on these bounds and the fixed
backgrounds.

The difference of the magnetic equations is
\[
 (\partial_t+u_1\cdot\nabla)\delta H
 =-H_1\dv\delta u-\delta u\cdot\nabla H_2
                       -\delta H\,\dv u_2 .
\]
Test it by $\delta H$; test the momentum and thermal differences
by $\delta u,\delta\theta$, respectively, using the mass weight
$\rho_1$. The pressure difference is integrated by parts and obeys
$\|P_{c,1}-P_{c,2}\|_2
 \le C(\|\delta H\|_2+\|\delta\theta\|_2)$.
The principal viscous and thermal quadratic forms are coercive.

We give the estimates for the terms where regularity is decisive.
For $f=\dot u_2$ or $\dot\theta_2$, and $v=\delta u$ or
$\delta\theta$, respectively,
\[
 \int|\delta H||f||v|
 \le\|\delta H\|_2\|f\|_4\|v\|_4
 \le\varepsilon\|\nabla v\|_2^2+
 C_\varepsilon\|f\|_4^{4/3}
                     (\|\delta H\|_2^2+\|v\|_2^2).
\]
The transport term containing $\nabla H_2$ is estimated by the
same inequality with $f=\nabla H_2$ and $v=\delta u$.
Temperature-dependent stress differences satisfy
\begin{align*}
 \int|\delta\theta||\nabla u_2||\nabla\delta u|
 &\le\|\delta\theta\|_4\|\nabla u_2\|_4\|\nabla\delta u\|_2
 \le\varepsilon(\|\nabla\delta u\|_2^2+
                           \|\nabla\delta\theta\|_2^2)
             +C_\varepsilon\|\nabla u_2\|_4^4\|\delta\theta\|_2^2.
\end{align*}
The heat-flux difference has the same bound with $\nabla\theta_2$.
For the difference of viscous heating use
\[
 |Q(u_1)-Q(u_2)|
 \le C(|\nabla u_1|+|\nabla u_2|)|\nabla\delta u|.
\]
The resulting product with $\delta\theta$ is controlled by the
preceding two Young inequalities. The coefficient difference in
the heating term is bounded using
$\|\delta\theta\|_4^2\|\nabla u_2\|_4^2$.
All convection and thermal pressure terms are bounded by the
same interpolations or by the integrable Lipschitz norm of $u_i$.

After absorbing the finitely many small derivative terms, we obtain
\begin{equation}\label{eq:global-uniqueness}
 \begin{aligned}
 &\frac{\mathrm d}{\mathrm dt}
  \left[\|\delta H\|_2^2+
       \int\rho_1(|\delta u|^2+|\delta\theta|^2)\dd x\right]
       +c(\|\nabla\delta u\|_2^2+
                         \|\nabla\delta\theta\|_2^2)\\
 &\le G(t)
  \left[\|\delta H\|_2^2+
       \int\rho_1(|\delta u|^2+|\delta\theta|^2)\dd x\right],
 \end{aligned}
\end{equation}
where one may take
\[
 G(t)=C_T\left[1+\|\nabla u_1\|_\infty+\|\nabla u_2\|_\infty
          +\|\dot u_2\|_4^{4/3}
          +\|\dot\theta_2\|_4^{4/3}\right]\in L^1(0,T).
\]
Gronwall's inequality proves uniqueness. The arbitrariness of $T$
gives uniqueness globally.

The bootstrap bounds give \eqref{eq:global-uniform}; the basic,
first-order, material, elliptic and magnetic dissipation estimates
give \eqref{eq:global-integrals}, first on $[0,T]$ and then on
$[0,\infty)$ by monotone convergence. This completes the proof of
Theorem~\ref{thm:global}.

\end{document}